\definecolor{dccol}{RGB}{0,0,255}
\definecolor{njccol}{RGB}{255,99,0}
\definecolor{jsfcol}{RGB}{0,128,0}
\definecolor{mxj}{RGB}{10, 140, 148}
\newcommand{\rank}{\operatorname{rank}}
\newtheorem{theorem}{Theorem}[section]
\newtheorem{lemma}[theorem]{Lemma}
\newtheorem{corollary}[theorem]{Corollary}
\newtheorem{proposition}[theorem]{Proposition} 
\begin{document}
\title{\LARGE \bf Optimal Estimation with Sensor Delay}
\author{D.~Cao$^{1}$, N.~J.~Cowan$^{1}$, and J.~S.~Freudenberg$^{2}$
\thanks{*This work is supported by NSF Awards 1825489 and 1825931 and NIH Award R01-HD040289.}%
\thanks{$^{1}$D.~Cao and N.~J.~Cowan are with the Department of
    Mechanical Engineering, Johns Hopkins University, Baltimore MD
    21218 USA.  {\tt <dcao9, ncowan>@jhu.edu}}%
\thanks{$^{2}$J.~S.~Freudenberg is with the Department of Electrical
    Engineering and Computer Science, University of Michigan, Ann
    Arbor MI 48109 USA.  {\tt jfr@umich.edu}}%
}
\maketitle 
%
\begin{abstract} Given a plant subject to delayed sensor measurement,
  there are several approaches to compensate for the delay.
An obvious approach is to address this problem in state space, where
the $n$-dimensional plant state is augmented by an $N$-dimensional
(Pad\'e) approximation to the delay, affording (optimal) state
estimate feedback vis-\`a-vis the separation principle. Using this
framework, we show: (1) Feedback of the estimated plant states
partially inverts the delay; (2) The optimal (Kalman) estimator
decomposes into $N$ (Pad\'e) uncontrollable states, and the
remaining $n$ eigenvalues are the solution to a reduced-order Kalman
filter problem. Further, we show that the tradeoff of estimation
error (of the full state estimator) between plant disturbance and
measurement noise, only depends on the reduced-order Kalman filter
(that can be constructed independently of the delay); (3) A subtly modified
  version of this state-estimation-based control scheme bears close
  resemblance to a Smith predictor. This modified state-space approach
  shares several limitations with its Smith predictor analog
  (including the inability to stabilize most unstable plants),
  limitations that are alleviated when using the unmodified state
  estimation framework.
 
\end{abstract}

\begin{IEEEkeywords}
Riccati equations, state estimation, state space model,  Smith predictor, time delay.
\end{IEEEkeywords}

\section{Introduction}
\label{sec:intro}


Time delay is ubiquitous in chemical processes, biological systems, and
industrial applications. It is widely known that in biological systems 
  delay is inevitable in the sensory feedback simply because neural
transduction of information is slow
\cite{more2018scaling,madhavsynergy2020}. For example, visuomotor
delays introduce about 110-160~ms into the feedback
loop~\cite{franklin2008specificity,haith2016independence,zimmetcerebellar2020}. However,
despite these long latencies, humans and other animals manage smooth
and accurate movements with ease. So a fundamental question in
neuroscience is how the brain compensates for such delays
\cite{more2018scaling,zimmetcerebellar2020,susilaradeya2019extrinsic,crevecoeur2019filtering}. It
is believed that our nervous system could provide predictive
estimation and control through internal models of the plant, sensor
dynamics, and delay, in order to compensate for the delayed feedback.

Two common approaches for model-based delay compensation are
Smith-Predictor-like and state-observer-based controllers. Smith
Predictors rely on an accurate model of plant and delay and then, save
for (most) unstable plants, the controller can be designed without
consideration of the time delay \cite{Smith57}. There are many
improvements of the basic Smith Predictor architecture, most notably
those that could extend to unstable plants
\cite{garcia2006control, sanz2018generalized}. The state observer based method can also
compensate for time delay in the feedback loop \cite{crevecoeur2019filtering,lima2018robust}. 

Several studies have suggested that the human predictive
controller may be modeled as a Smith
Predictor\cite{miall1993cerebellum, zimmetcerebellar2020, tolu2020cerebellum}, while other studies examine
state-observer approaches
\cite{lima2018robust,susilaradeya2019extrinsic,crevecoeur2019filtering}.
Indeed, there are structural similarities between
  these two approaches; for example, Mirkin and Raskin~\cite{mirkin2003every} showed that for continuous-time systems every stabilizing time-delay
  controller has an observer--predictor-based structure.
  For discrete-time systems with input
  delay, Mirkin and Zanutto
  \cite{mirkin2021dead} showed that the discrete equivalent of the observer-predictor
  architecture can be derived via classical state-feedback and
  observer design. They further characterized the closed loop eigenvalues in the event that the state feedback is LQR optimal by showing that they   either lie at the origin or arise from   a lower order LQR problem that does not involve the delay.

In the present paper, we study continuous-time systems with a delay at the output that we model using a Pad\'{e} approximation~\cite{natori2012design,probst2010using,Franklin}, and consider a state feedback that depends only on the plant states combined with an estimator for both plant and Pad\'e states. We show in Section~\ref{sec:continuous} that
the poles of the Pad\'e approximation will appear as transmission zeros of
the observer-based compensator,  thus providing phase lead that partially masks the  lag contributed by the delay. We also describe a
tradeoff filter between the response of the estimation error to
measurement noise, on the one hand, and disturbances and mismatch between
the Pad\'e approximation and the time delay, on the other.  In Section~\ref{sec:optimal}, we assume that the observer is a steady-state Kalman filter and    show that the optimal
estimator has uncontrollable eigenvalues equal to those of the Pad\'e
approximation. The remaining  eigenvalues may be found from a lower order estimation problem that ignores the time
delay, and   the tradeoff filter is also  independent of the   delay. We exploit the special structure of the optimal estimator in Section~\ref{sec:alternate} by proposing an alternate control architecture together with conditions under which it is stabilizing. This alternate control architecture has properties that are remarkably similar to those of the Smith predictor.  We explore these similarities in
Section~\ref{sec:Smith}, and discuss further research directions and
potential connections to neuroscience in
Section~\ref{sec:conclusions}. Appendix~\ref{sec:continuous_outputdelay} contains proofs of some of the technical results from the body of the paper. In Appendix~\ref{sec:continuous_inputdelay} we present the counterpart to the results of Section~\ref{sec:optimal} for continuous-time systems with a delay at the plant input. We study discrete-time systems with an output delay in Appendices~\ref{sec:discrete_feedback} and~\ref{sec:discrete_optimal}. We also describe the connections between our work and the work of Mirkin et al.~\cite{mirkin2021dead} in Appendix ~\ref{sec:discrete_optimal}.

\subsection*{Notation} We let $I_n$ and $0_n$ denote the $n\times n$
identity and zero matrices, respectively, with the subscript
suppressed if $n=1$. Also, we let $0_{n\times m}$ denote an
$n\times m$ matrix of zeros, and $I_{n\times m}$ denote a block matrix
whose upper left hand block is the identity matrix $I_{\min(m,n)}$ and
whose remaining entries are zero. Finally, let $e_i$ denote the $i$'th
standard basis vector.

 \section{Feedback of Estimated Plant States Partially Inverts Delay}
 \label{sec:continuous}
 In this section, we describe a standard, state-estimate
   feedback controller in which the observer includes a Pad\'{e}
   approximation of the delay.  This approach performs delay
   compensation, in the sense that the estimated plant states
   partially invert the delay. 

 Consider the single input, single output  linear system
 \begin{equation}
     \dot{x} = Ax + Bu+ Ed, \qquad y = Cx, \qquad x\in \mathbb{R}^n, \label{eq:xdot}
 \end{equation}
and denote the transfer function from $u$ to $y$ by $G(s) = C(sI_n-A)^{-1}B$ and that from $d$ to $y$ by $G_d(s)=C(sI_n-A)^{-1}E$. Assume that $(A,C)$ is observable and that $(A,B)$ and $(A, E)$ are controllable. 

Suppose that the measurement is delayed by $\tau$ seconds, so that only the delayed output
 \begin{equation}\label{eq:w(t)}
    w(t) = y(t-\tau)
 \end{equation}
  is available to the controller, and assume the presence of additive measurement noise
 \begin{equation}\label{eq:wm}
    w^m = w+n.
 \end{equation}
 
 To obtain a finite dimensional system, we will approximate the time delay $e^{-s\tau}$ by passing $y(t)$ through an $N$'th order Pad\'e approximation ~\cite{natori2012design,probst2010using,Franklin} with minimal  realization
  \begin{equation} \label{eq:qdot}
  \dot{q}_N = A_Nq_N + B_Ny, \;w_N = C_Nq_N+D_Ny, \;q_N\in \mathbb{R}^N       
 \end{equation}
 and transfer function 
   \begin{equation} \label{eq:PN}
 P_N(s)=C_N(sI_N-A_N)^{-1}B_N+D_N
 \end{equation}
 For example, with $N=1$,
$P_1(s) =(2-\tau s)/(2+\tau s)$.
 Note that $P_1(s)$ has a nonminimum phase zero at $z = 2/\tau$. It  is generally true that an $N$-th order Pad\'e approximation will have $N$ nonminimum phase zeros $\{z_i, i = 1, \ldots,N\}$, $N$ poles at the arithmetic inverse of these zeros, and is allpass with unity gain: $|P_N(j\omega)|=1, \forall \omega$.

Denote the system obtained by augmenting the Pad\'e state equations \eqref{eq:qdot} to those of the plant \eqref{eq:xdot} with noisy measurement \eqref{eq:wm}  by
 \begin{align}
     \dot{x}_{aug} &= A_{aug}x_{aug} + B_{aug}u +  E_{aug}d,  \label{eq:xaugdot}\\
              w^m_N &= C_{aug}x_{aug} +n,    \label{eq:wN1}
 \end{align}
 where $x_{aug} = \begin{bmatrix}x^T & q_N^T\end{bmatrix}^T$,
 \begin{equation}\label{eq:aug_defs}
 A_{aug} = \begin{bmatrix}A & 0_{n\times N}\\ B_NC & A_N\end{bmatrix}, \; B_{aug} = \begin{bmatrix}B \\ 0_{N\times 1}\end{bmatrix},
 \end{equation}	
 $E_{aug} = \begin{bmatrix}E^T & 0_{N\times 1}\end{bmatrix}^T$ and $C_{aug} = \begin{bmatrix}D_NC&C_N\end{bmatrix}$.
It is straightforward to show that if $G(s)$ has no zeros at the eigenvalues of $A_N$, then $(A_{aug}, B_{aug})$ is controllable. Similarly, if $P_N(s)$ has no zeros at the eigenvalues of $A$, then $(A_{aug}, C_{aug})$ is observable. 

 Let the control law be given by state estimate feedback
 \begin{equation}\label{eq:u}
    u = -K\hat{x} + Hr,
 \end{equation}
 where $\hat{x}$ must be obtained using the delayed measurement of the output $y$, which we have approximated by passing $y$ through the Pad\'e approximation $P_N(s)$. Hence an observer must estimate both the plant states and the states of \eqref{eq:qdot}. Denote the estimator gain for the augmented system by $L^T_{aug} = \begin{bmatrix}L_1^T& L_2^T\end{bmatrix}^T$, with $L_1\in \mathbb{R}^n$ and  $L_2\in \mathbb{R}^N$. Then
 \begin{align}
     \dot{\hat{x}}_{aug} &= A_{aug} \hat{x}_{aug} +  B_{aug}u +  L_{aug}\tilde{w}^m_N \label{eq:obs}\\
  \hat{w}_N &= C_{aug}\hat{x}_{aug}. \label{eq:what}
\end{align} 
where $\tilde{w}^m_N$, the measured estimation error for the delayed output, is given by
\begin{equation}\label{eq:wmtilde}
  \tilde{w}^m_N = w^m - \hat{w}_N.
\end{equation}
Substituting the control law $u = -K_{aug}\hat{x}_{aug}+Hr$, where $K_{aug} = \begin{bmatrix}K&0\end{bmatrix}$, and    $r$ is set to zero,  and using the fact that $\hat{w}_N$ satisfies \eqref{eq:what} yields that the transfer function from $w^m$ to $-u$ is given by $-U(s)=\bar{C}_{obs}(s)W^m(s)$, where $\bar{C}_{obs}(s)$ has the state variable realization 
\begin{equation}\label{eq:Cobs_ss}
 \dot{\hat{x}}_{aug}  =A_{obs}\hat{x}_{aug}+L_{aug}w^m,\quad -u = K_{aug}\hat{x}_{aug},
\end{equation}
and 
\begin{equation}\label{eq:Aobs}
A_{obs}=A_{aug}-B_{aug}K_{aug}-L_{aug}C_{aug}. 
\end{equation}
Under mild assumptions, the system \eqref{eq:Cobs_ss} is minimal (see Lemma~\ref{prop:Cobs_minimal}  in the Appendix).

We now study the transmission zeros of the observer based compensator $\bar{C}_{obs}(s)$. Given a state variable system $(A, B, C)$, recall that a zero of the Rosenbrock System Matrix will also be a transmission zero of the associated transfer function if $(A,B)$ is controllable and $(A,C)$ is observable \cite{AM97}.
The followeing lemma, whose proof is in the Appendix \ref{sec:continuous_outputdelay}, shows that  under very mild assumptions the realization \eqref{eq:Cobs_ss} is minimal.
 \begin{lemma}\label{prop:Cobs_minimal}Consider the  compensator defined by \eqref{eq:Cobs_ss}. 
 \begin{enumerate}[(i)]
 \item Assume that the eigenvalues of $A_N$ and $A-BK$ are disjoint,  and that $A-BK$ has no eigenvalues that are uncontrollable from $L_1$. Denote the eigenvalues and associated left eigenvectors of $A_N$ by $\lambda_i$ and $w_i^T$, $ i = 1, \ldots, N$, and assume further that
 \begin{equation}\label{eq:wicondition}
 w_i^TB_NC(\lambda_i I_n-A+BK)^{-1}L_1+w_i^TL_2 \neq 0.
 \end{equation}
  Then $(A_{obs}, L_{aug})$ is a controllable pair. 
\item Assume that the eigenvalues of $A_{aug}-L_{aug}C_{aug}$ and $A-L_1D_NC$ are disjoint, and that $K(\lambda I_n-A+L_1C)^{-1}L_1\neq 0$ for any $\lambda$ that is an eigenvalue of $A_{aug}-L_{aug}C_{aug}$. Then $(A_{obs}, K_{aug})$ is an observable pair.\qed
 \end{enumerate}
 \end{lemma}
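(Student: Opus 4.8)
The plan is to establish both claims with the Popov--Belevitch--Hautus (PBH) eigenvector test, leveraging the block partition of the augmented data. Throughout, write a candidate eigenvector of $A_{obs}$ in partitioned form $v=\begin{bmatrix}v_1^T & v_2^T\end{bmatrix}^T$ with $v_1\in\mathbb{C}^n$ and $v_2\in\mathbb{C}^N$, and note that $B_{aug}K_{aug}=\begin{bmatrix}BK & 0\\ 0 & 0\end{bmatrix}$ acts only on the plant block whereas $L_{aug}C_{aug}$ couples both blocks. The two starting points are the decompositions $A_{obs}=(A_{aug}-B_{aug}K_{aug})-L_{aug}C_{aug}=(A_{aug}-L_{aug}C_{aug})-B_{aug}K_{aug}$.

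For (i), suppose toward a contradiction that $v^T A_{obs}=\mu v^T$ with $v^T L_{aug}=0$. Then $v^T L_{aug}=0$ annihilates the $L_{aug}C_{aug}$ term, so $v^T$ is a left eigenvector of $A_{aug}-B_{aug}K_{aug}=\begin{bmatrix}A-BK & 0\\ B_NC & A_N\end{bmatrix}$, which is block lower triangular. Its two block rows give $v_2^T A_N=\mu v_2^T$ and $v_1^T(A-BK)+v_2^T B_NC=\mu v_1^T$. If $v_2\neq 0$, then $\mu$ is an eigenvalue of $A_N$ and $v_2^T$ is proportional to some $w_i^T$; by the disjointness hypothesis $\mu I_n-A+BK$ is invertible, so $v_1^T=v_2^T B_NC(\mu I_n-A+BK)^{-1}$, and substituting into $0=v^T L_{aug}=v_1^T L_1+v_2^T L_2$ contradicts \eqref{eq:wicondition}. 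If $v_2=0$, then $v_1\neq 0$ is a left eigenvector of $A-BK$ with $v_1^T L_1=0$, i.e.\ an eigenvalue of $A-BK$ uncontrollable from $L_1$, contrary to hypothesis. Hence no such $v$ exists and $(A_{obs},L_{aug})$ is controllable.

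For (ii), suppose $A_{obs}v=\mu v$ with $K_{aug}v=Kv_1=0$. Using the second decomposition, $B_{aug}K_{aug}v=0$, so $(A_{aug}-L_{aug}C_{aug})v=\mu v$. I expect the main obstacle here: $A_{aug}-L_{aug}C_{aug}=\begin{bmatrix}A-L_1D_NC & -L_1C_N\\ B_NC-L_2D_NC & A_N-L_2C_N\end{bmatrix}$ is not block triangular, so this half is not a mechanical dual of (i). I would decouple the eigenvector equations by hand by introducing the scalar $\eta:=C_{aug}v=D_NCv_1+C_Nv_2$; the two block rows then read $(\mu I_n-A+L_1D_NC)v_1=-L_1(C_Nv_2)$ and $(\mu I_N-A_N)v_2=B_NCv_1-L_2\eta$. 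By the disjointness hypothesis $\mu I_n-A+L_1D_NC$ is invertible, so $v_1=-(C_Nv_2)(\mu I_n-A+L_1D_NC)^{-1}L_1$; applying $K$ and invoking the nonvanishing hypothesis of (ii) forces $C_Nv_2=0$, hence $v_1=0$ and $\eta=0$. The second block row then collapses to $A_Nv_2=\mu v_2$, and since $(A_N,C_N)$ is observable (being a minimal realization of $P_N$), $C_Nv_2=0$ forces $v_2=0$. Thus $v=0$, a contradiction, and $(A_{obs},K_{aug})$ is observable.

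So the work is mostly careful bookkeeping with the $n$ versus $N$ blocks and two PBH arguments; the one genuinely non-routine point is the asymmetry in (ii), where the coupled eigenvector equations must be decoupled through $\eta$ and closed off using observability of the Pad\'e realization rather than by dualizing (i). Having shown both pairs controllable and observable, \eqref{eq:Cobs_ss} is minimal, so (as recalled above) the zeros of its Rosenbrock system matrix are exactly the transmission zeros of $\bar{C}_{obs}(s)$.
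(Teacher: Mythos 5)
Your proposal is correct and follows essentially the same route as the paper: both parts are PBH eigenvector tests in which the conditions $v^TL_{aug}=0$ (resp.\ $K_{aug}v=0$) reduce $A_{obs}$ to the block-triangular $A_{aug}-B_{aug}K_{aug}$ (resp.\ to $A_{aug}-L_{aug}C_{aug}$), after which disjointness lets you solve for the plant-block component and the nonvanishing hypotheses \eqref{eq:wicondition} and $K(\lambda I_n-A+L_1D_NC)^{-1}L_1\neq 0$ rule out a nonzero solution, with the residual case closed off by minimality (observability) of the Pad\'e realization $(A_N,C_N)$, exactly as in the paper's $x=0$ case. The only cosmetic difference is bookkeeping: the paper writes out the three PBH equations and substitutes the last into the first two, while you package the same cancellation via the two decompositions of $A_{obs}$ and the scalar $\eta=C_{aug}v$.
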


\begin{proposition}\label{cor:Cobszeros}Consider the transfer function $\bar{C}_{obs}(s)$ with state variable realization \eqref{eq:Cobs_ss}, and assume that the hypotheses of Lemma~\ref{prop:Cobs_minimal} are satisfied.  Then the transmission zeros of $\bar{C}_{obs}(s)$ include zeros   at the open left half plane mirror images of the $N$ nonminimum phase zeros of $P_N(s)$.
\end{proposition}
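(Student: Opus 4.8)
The plan is to derive an explicit rational expression for $\bar{C}_{obs}(s)$ by Laplace transforming the loop \eqref{eq:obs}--\eqref{eq:u} with $r=0$, and then to read the claimed zeros directly off its numerator. Writing $\hat{x}_{aug}=[\hat{x}^{T}\ \hat{q}_{N}^{T}]^{T}$ and eliminating $\hat{q}_{N}$ first, the estimator equations give $\hat{w}_{N}=P_{N}(s)\,C\hat{x}+C_{N}(sI_{N}-A_{N})^{-1}L_{2}\,\tilde{w}^{m}_{N}$; combining this with $\tilde{w}^{m}_{N}=w^{m}-\hat{w}_{N}$, with $\hat{x}=(sI_{n}-A+BK)^{-1}L_{1}\tilde{w}^{m}_{N}$ (obtained from the $\hat{x}$-dynamics together with $u=-K\hat{x}$), and with $-u=K\hat{x}$, yields
\[
\bar{C}_{obs}(s)=\frac{K(sI_{n}-A+BK)^{-1}L_{1}}{\Lambda(s)+P_{N}(s)\,C(sI_{n}-A+BK)^{-1}L_{1}},
\]
where $\Lambda(s)=1+C_{N}(sI_{N}-A_{N})^{-1}L_{2}$.

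Next I would clear denominators. Multiplying through by $\det(sI_{N}-A_{N})\det(sI_{n}-A+BK)$ and using that $P_{N}(s)\det(sI_{N}-A_{N})=C_{N}\operatorname{adj}(sI_{N}-A_{N})B_{N}+D_{N}\det(sI_{N}-A_{N})$ is a polynomial, the numerator of $\bar{C}_{obs}(s)$ becomes $K\operatorname{adj}(sI_{n}-A+BK)L_{1}\cdot\det(sI_{N}-A_{N})$. Hence, before any pole--zero cancellation, $\bar{C}_{obs}(s)$ vanishes at every eigenvalue $\lambda_{i}$ of $A_{N}$; and by the properties of the Pad\'e approximation recalled after \eqref{eq:PN}, these eigenvalues are exactly the poles of the allpass $P_{N}(s)$, i.e.\ the open-left-half-plane mirror images $-z_{i}$ of its nonminimum phase zeros.

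It then remains to verify that each factor $s-\lambda_{i}$ survives as a genuine transmission zero, i.e.\ is not cancelled by the denominator. Evaluating the cleared denominator at $s=\lambda_{i}$ and using $\det(\lambda_{i}I_{N}-A_{N})=0$, it reduces to $C_{N}\operatorname{adj}(\lambda_{i}I_{N}-A_{N})\big[L_{2}\det(\lambda_{i}I_{n}-A+BK)+B_{N}C\operatorname{adj}(\lambda_{i}I_{n}-A+BK)L_{1}\big]$. Writing $\operatorname{adj}(\lambda_{i}I_{N}-A_{N})=c_{i}v_{i}w_{i}^{T}$ with $c_{i}\neq0$ (legitimate because the Pad\'e poles are simple, or more generally non-derogatory, so $\lambda_{i}I_{N}-A_{N}$ has rank $N-1$) and $v_{i},w_{i}$ the corresponding right and left eigenvectors of $A_{N}$, and then factoring out $\det(\lambda_{i}I_{n}-A+BK)\neq0$ (nonzero by the disjoint-spectra hypothesis of Lemma~\ref{prop:Cobs_minimal}(i)), this becomes $c_{i}\,(C_{N}v_{i})\,\det(\lambda_{i}I_{n}-A+BK)\big[w_{i}^{T}B_{N}C(\lambda_{i}I_{n}-A+BK)^{-1}L_{1}+w_{i}^{T}L_{2}\big]$. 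Here $C_{N}v_{i}\neq0$ since $(A_{N},C_{N})$ is observable, and the bracketed quantity is exactly the nonzero expression \eqref{eq:wicondition}; so the denominator does not vanish at $\lambda_{i}$ and the zero is not cancelled. Finally, by Lemma~\ref{prop:Cobs_minimal} the realization \eqref{eq:Cobs_ss} is minimal, so the zeros of the coprime numerator of $\bar{C}_{obs}(s)$ coincide with its transmission zeros; in particular these include the $N$ points $-z_{i}$, as claimed.

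The main obstacle is more bookkeeping than conceptual: carrying out the elimination of $\hat{q}_{N}$ without sign errors, and --- the real crux --- recognizing that the value of the cleared denominator at $\lambda_{i}$ reduces to a nonzero multiple of the left-hand side of \eqref{eq:wicondition}, which is precisely what ties the non-cancellation requirement of this Proposition to the hypothesis already imposed in Lemma~\ref{prop:Cobs_minimal}. A minor point to settle along the way is the non-derogatory nature of the Pad\'e poles, which gives $\operatorname{adj}(\lambda_{i}I_{N}-A_{N})$ the rank-one form $c_{i}v_{i}w_{i}^{T}$ used above.
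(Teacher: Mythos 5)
Your proof is correct, but it takes a genuinely different route from the paper. The paper works entirely in state space: it forms the Rosenbrock system matrix of the realization \eqref{eq:Cobs_ss}, exhibits the explicit null vector $\begin{bmatrix}0_{n\times 1}^T & v_i^T & (C_Nv_i)^T\end{bmatrix}^T$ at each eigenvalue $\lambda_i=-z_i$ of $A_N$, and then invokes the minimality established in Lemma~\ref{prop:Cobs_minimal} to conclude that these invariant zeros are transmission zeros. You instead eliminate the estimator states to get the closed-form scalar transfer function $\bar{C}_{obs}(s)=K(sI_n-A+BK)^{-1}L_1\big/\big(1+C_N(sI_N-A_N)^{-1}L_2+P_N(s)C(sI_n-A+BK)^{-1}L_1\big)$, clear denominators so that $\det(sI_N-A_N)$ appears as a factor of the numerator, and then verify non-cancellation by evaluating the cleared denominator at $\lambda_i$: using $\operatorname{adj}(\lambda_i I_N-A_N)=c_iv_iw_i^T$ (legitimate since the Pad\'e poles are simple), observability of $(A_N,C_N)$, and the spectral disjointness hypothesis, the value reduces to a nonzero multiple of the left-hand side of \eqref{eq:wicondition}. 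This buys two things the paper's argument does not make explicit: an explicit formula for $\bar{C}_{obs}(s)$ (analogous to \eqref{eq:Cobs_re} in Section~\ref{sec:alternate}), and a transparent identification of \eqref{eq:wicondition} as exactly the condition preventing pole--zero cancellation at $\lambda_i$; indeed, since the transfer function itself vanishes at $\lambda_i$ while remaining finite there, your final appeal to minimality is not really needed. The paper's route is shorter, avoids the adjugate bookkeeping, and extends more naturally beyond the SISO setting. One cosmetic remark: your non-derogatory aside is unnecessary, since (as the paper notes) $A_N$ has $N$ distinct eigenvalues, so simplicity is automatic.
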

\begin{proof}
Using definition \eqref{eq:qdot}  of  $A_N$, $B_N$, $C_N$, and $D_N$, the Rosenbrock System Matrix for this system is given by
\begin{align*} RS&M(s)=\\
&\begin{bmatrix}sI_n-A_{aug}+B_{aug}K_{aug}+L_{aug}C_{aug}& -L_{aug}\\
K_{aug} &0
  \end{bmatrix}.
\end{align*}
Properties of Pad\'e approximations imply that $A_N$ has $N$ distinct eigenvalues in the open left half plane located at the arithmetic inverse of the $N$ nonminimum phase zeros of $P_N(s)$. Let $\lambda_i=-z_i$ denote one such eigenvalue and let $v_i$ denote an associated eigenvector. Then $RSM(\lambda_i)$ has a nullspace spanned by    $\begin{bmatrix}0_{n\times 1}^T&v_i^T& (C_Nv_i)^T\end{bmatrix}^T$.
Since the hypotheses of Lemma~\ref{prop:Cobs_minimal} are satisfied, we know that the realization of $\bar{C}_{obs}(s)$ is minimal, and thus the  zeros of the Rosenbrock system matrix are also transmission zeros.
\end{proof}

A block diagram of the feedback system is in
Fig.~\ref{fig:continuous_system}.  Note that the zeros at the Pad\'e
poles in $\bar{C}_{obs}(s)$ provide phase lead to partially compensate
 for the phase lag due to the time delay. 
This is consistent with the results of Carver et
al.~\cite{Carver09}; specifically, the zeros in the compensator at the
poles of the Pad\'e approximation will partially invert the sensor
delay, and thus tend to mask the presence of the delay in the mapping
from $y$ to $u$.
\begin{figure}[htbp]
\centerline{\epsfig{file=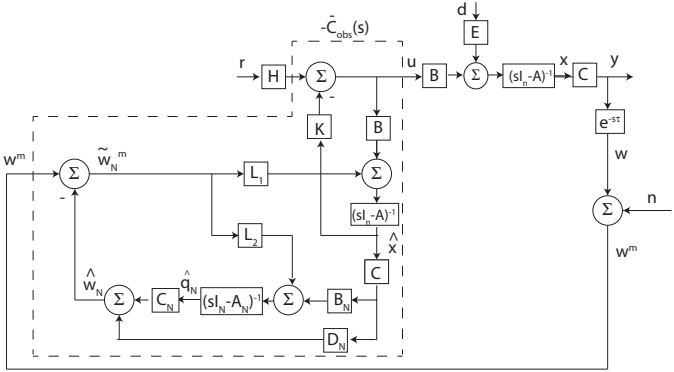,clip=,width = 3.5in}}
\caption{Continuous-time feedback system with  Pad\'e approximation to delay.}
\label{fig:continuous_system}
\end{figure}

Denote the estimation error between the delayed output $w$ and the estimate $\hat{w}_N$ by
\begin{equation}\label{eq:wtilde}
  \tilde{w}_N = w-\hat{w}_N.
\end{equation}
We now show that $\tilde{w}_N$ depends on the disturbance $d$, the
measurement noise $n$, and the error in the Pad\'e approximation to
the time delay.
\begin{proposition}\label{prop:F}
  Define the \emph{tradeoff filter}
  \begin{equation} \label{eq:Fs} \bar{F}(s) =
    \left(1+L_{est}^{aug}(s)\right)^{-1}.
  \end{equation}
  where $L_{est}^{aug}(s)$ denotes the transfer function from
  $\tilde{w}_N^m$ to $\hat{w}_N$ in
  Fig.~\ref{fig:continuous_system}, given by

  \begin{equation}\label{eq:Lestaug}
  \begin{aligned} 
      L_{est}^{aug}(s) &=  C_{aug}\left(sI_{n+N}-A_{aug}\right)^{-1}L_{aug}\\
      &=
        P_N(s)C\large(sI_{n}-A\large)^{-1}L_1 \\
    &\qquad\qquad + C_N\large(sI_{N}-A_N\large)^{-1}L_2
    \end{aligned}
  \end{equation}
  Then the estimation error \eqref{eq:wtilde} satisfies
  \begin{align}
    \tilde{W}_N(s) &= \bar{F}(s)\left( \Delta(s)G(s)U(s)+ e^{-s\tau} G_d(s)D(s)\right)\notag \\ & \qquad\qquad + \left(\bar{F}(s)-1\right)N(s),\label{eq:Fs_tradeoff}
  \end{align}
  where $\Delta(s) = e^{-s\tau}-P_N(s)$.
\end{proposition}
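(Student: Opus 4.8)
The plan is to work entirely in the Laplace domain and reduce everything to signal-flow algebra on the augmented observer \eqref{eq:obs}--\eqref{eq:wmtilde}. First I would write the observer output $\hat{W}_N(s)$ as the sum of its response to the control $u$ and its response to the innovation $\tilde{w}_N^m$. From \eqref{eq:obs}--\eqref{eq:what} this gives
\[
\hat{W}_N(s) = C_{aug}\left(sI_{n+N}-A_{aug}\right)^{-1}B_{aug}\,U(s) + L_{est}^{aug}(s)\,\tilde{W}_N^m(s),
\]
with $L_{est}^{aug}(s)=C_{aug}\left(sI_{n+N}-A_{aug}\right)^{-1}L_{aug}$, which is exactly the first line of \eqref{eq:Lestaug}.

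Next I would exploit the block lower-triangular structure of $A_{aug}$ in \eqref{eq:aug_defs}. Inverting $sI_{n+N}-A_{aug}$ blockwise yields $(sI_n-A)^{-1}$ in the $(1,1)$ block, $(sI_N-A_N)^{-1}$ in the $(2,2)$ block, and $(sI_N-A_N)^{-1}B_NC(sI_n-A)^{-1}$ in the $(2,1)$ block. Multiplying out $C_{aug}(\cdot)B_{aug}$ and $C_{aug}(\cdot)L_{aug}$ and using $P_N(s)=D_N+C_N(sI_N-A_N)^{-1}B_N$ produces the two identities I need: the transfer function from $u$ to $\hat{w}_N$ (through the observer dynamics) equals $P_N(s)G(s)$, and $L_{est}^{aug}(s)$ collapses to the two-term form $P_N(s)C(sI_n-A)^{-1}L_1+C_N(sI_N-A_N)^{-1}L_2$ of \eqref{eq:Lestaug}. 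These are the only genuine computations in the proof.

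Then I would assemble the error. The true delayed output satisfies $W(s)=e^{-s\tau}Y(s)=e^{-s\tau}\left(G(s)U(s)+G_d(s)D(s)\right)$, and combining \eqref{eq:wm}, \eqref{eq:wmtilde}, and \eqref{eq:wtilde} gives $\tilde{W}_N^m(s)=\tilde{W}_N(s)+N(s)$. Substituting the expression for $\hat{W}_N(s)$ into $\tilde{W}_N=W-\hat{W}_N$ and collecting the $\tilde{W}_N$ terms on the left yields
\[
\left(1+L_{est}^{aug}(s)\right)\tilde{W}_N(s) = \left(e^{-s\tau}-P_N(s)\right)G(s)U(s) + e^{-s\tau}G_d(s)D(s) - L_{est}^{aug}(s)N(s).
\]
Multiplying through by $\bar{F}(s)=\left(1+L_{est}^{aug}(s)\right)^{-1}$ and using the elementary identity $\bar{F}(s)-1=-\bar{F}(s)L_{est}^{aug}(s)$ to rewrite the noise term gives \eqref{eq:Fs_tradeoff} with $\Delta(s)=e^{-s\tau}-P_N(s)$.

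I do not expect a real obstacle: the argument is routine once the augmented realization is inverted. The only points that require care are bookkeeping ones --- keeping the true delayed output $w=y(t-\tau)$ distinct from its Pad\'e surrogate $w_N$, respecting the sign conventions in $u=-K_{aug}\hat{x}_{aug}$ and $\tilde{w}_N^m=w^m-\hat{w}_N$, and remembering that $U(s)$ is retained as a free signal on the right-hand side so that the plant--controller loop need not be closed to obtain \eqref{eq:Fs_tradeoff}.
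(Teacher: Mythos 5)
Your proposal is correct and follows essentially the same route as the paper: express $\hat{W}_N(s)=P_N(s)G(s)U(s)+L_{est}^{aug}(s)\tilde{W}_N^m(s)$ from the augmented observer, use $W(s)=e^{-s\tau}\left(G(s)U(s)+G_d(s)D(s)\right)$ and $\tilde{w}_N^m=\tilde{w}_N+n$, and solve the resulting linear relation (the paper isolates $\tilde{W}_N^m$ first and then subtracts $N(s)$, whereas you isolate $\tilde{W}_N$ directly, which is the same algebra). Your explicit block-triangular inversion verifying the second line of \eqref{eq:Lestaug} is a detail the paper leaves implicit, not a different approach.
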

\begin{proof}
It follows from \eqref{eq:xdot}-\eqref{eq:w(t)}   that
\begin{equation}\label{eq:W(s)}
W(s)=e^{-s\tau}G(s)U(s) + e^{-s\tau} G_d(s)D(s).
\end{equation}
and follows from \eqref{eq:obs}-\eqref{eq:what} that 
   \begin{align}
  \hat{W}_N(s)         &= C_{aug}\left(sI_{n+N}-A_{aug}\right)^{-1}(B_{aug}U(s)+L_{aug}\tilde{W}^m_N)\notag \\ &= P_N(s)G(s)U(s) + L_{est}^{aug}(s)\tilde{W}^m_N \label{eq:What_s}
\end{align}
Together \eqref{eq:wm}, \eqref{eq:wmtilde}, \eqref{eq:W(s)} and \eqref{eq:What_s} imply that
\begin{equation} \label{eq:Wmtildes}
\tilde{W}_N^m(s)= \bar{F}(s)\left( \Delta(s)G(s)U(s)+ e^{-s\tau} G_d(s)D(s)+N(s)\right),    
\end{equation}
where $\bar{F}(s)$ is given by \eqref{eq:Fs}. Noting that $\tilde{w}^m_N = \tilde{w}_N+n$ together with \eqref{eq:Wmtildes} yields \eqref{eq:Fs_tradeoff}.


\end{proof}

Proposition~\ref{prop:F} implies that the filter $\bar{F}(s)$ describes a \emph{tradeoff} between the response of the estimation error to the plant disturbance $D(s)$ and the measurement noise $N(s)$. In addition, $\bar{F}(s)$ describes the response of the estimation error to $\Delta(s)$, the difference between the time delay and the Pad\'e approximation. We note that the first term in \eqref{eq:Fs_tradeoff}, which is proportional to $\Delta(s)$, will also depend on $D(s)$ and $N(s)$ through the control input $U(s)$. However, this first term can be minimized by increasing the degree of the Pad\'e approximation without affecting  the tradeoff between $D(s)$ and $N(s)$. Using large estimator gains $L_1$ and/or $L_2$ will force $\bar{F}(s)\approx 0$ and the disturbance response will be small at the expense of the noise being passed directly to the estimation error. The same will be true for the Pad\'e approximation error, provided that it is not large enough to destabilize the system. Using small estimator gains has the opposite effect. 

%

Let us now calculate the response of the system output. 
\begin{corollary}\label{cor:Yresponse}
The response of $y$ to the reference input $r$, disturbance $d$, and measurement noise $n$ is given by
\begin{align*}
   Y(s)&= C(sI_n-A+BK)^{-1}BHR(s)+ G_d(s)D(s)\\
         & - C(sI_n-A)^{-1}BK(sI_n-A+BK)^{-1}L_1\tilde{W}_N^m(s),
\end{align*}
where
\begin{equation*} 
    \tilde{W}_N^m(s) =    \bar{F}(s)\left( \Delta(s)G(s)U(s)+ e^{-s\tau}G_d(s)D(s)+N(s)\right).
\end{equation*}
\end{corollary}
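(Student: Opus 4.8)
The plan is to express $Y(s)=CX(s)$ directly in terms of $R(s)$, $D(s)$, and the measured estimation error $\tilde{W}_N^m(s)$, and then substitute the formula for $\tilde{W}_N^m(s)$ supplied by Proposition~\ref{prop:F}.

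First I would isolate the equation governing the plant-state estimate $\hat{x}$. Because the top-left block of $A_{aug}$ is $A$, its top-right block is $0_{n\times N}$, and the first $n$ rows of $B_{aug}$ and $L_{aug}$ are $B$ and $L_1$, the first $n$ components of the observer \eqref{eq:obs} obey $\dot{\hat{x}}=A\hat{x}+Bu+L_1\tilde{w}_N^m$. In the Laplace domain, combined with the control law $U(s)=-K\hat{X}(s)+HR(s)$, this gives $(sI_n-A+BK)\hat{X}(s)=BHR(s)+L_1\tilde{W}_N^m(s)$, hence $\hat{X}(s)=(sI_n-A+BK)^{-1}(BHR(s)+L_1\tilde{W}_N^m(s))$, and substituting back, an explicit expression for $U(s)$.

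Next I would write the plant response $Y(s)=C(sI_n-A)^{-1}BU(s)+G_d(s)D(s)$ from \eqref{eq:xdot}, insert the expression for $U(s)$, and collect terms. The coefficient of $\tilde{W}_N^m(s)$ is immediately $-C(sI_n-A)^{-1}BK(sI_n-A+BK)^{-1}L_1$, which matches the stated formula. The coefficient of $HR(s)$ is $C\left[(sI_n-A)^{-1}-(sI_n-A)^{-1}BK(sI_n-A+BK)^{-1}\right]B$, and the one point requiring care is to recognize the push-through identity $(sI_n-A)^{-1}B-(sI_n-A)^{-1}BK(sI_n-A+BK)^{-1}B=(sI_n-A+BK)^{-1}B$, verified by left-multiplying both sides by $sI_n-A$ and using $(sI_n-A)=(sI_n-A+BK)-BK$. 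This collapses the reference term to $C(sI_n-A+BK)^{-1}BHR(s)$, yielding precisely the claimed $Y(s)$; finally, substituting the Proposition~\ref{prop:F} expression for $\tilde{W}_N^m(s)$ closes the argument.

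The computation is entirely linear-algebraic, so there is no real obstacle --- only the bookkeeping that forces $L_1$ rather than the full $L_{aug}$ to appear (a consequence of the block structure of $A_{aug}$, $B_{aug}$, $L_{aug}$, $K_{aug}$) and the resolvent identity that merges the two reference-input terms.
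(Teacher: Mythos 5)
Your proposal is correct and follows essentially the same route as the paper: extract the plant-state estimator block to write $U(s)$ in terms of $HR(s)$ and $L_1\tilde{W}_N^m(s)$, substitute into the plant equation \eqref{eq:xdot}, and invoke the expression for $\tilde{W}_N^m(s)$ already established in the proof of Proposition~\ref{prop:F}. The only cosmetic difference is that you collapse the reference term via the push-through identity while the paper records it as $\left(1+K(sI_n-A)^{-1}B\right)^{-1}HR(s)$, which is the same quantity.
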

\begin{proof}
The definition of $u$ and $\hat{x}$ yield 
\begin{align}\notag
   U(s) &= -K(sI_n-A+BK)^{-1}L_1\tilde{W}_N^m(s) \\&+ \left(1+K(sI_n-A)^{-1}B\right)^{-1}HR(s),\label{eq:U(s)}
\end{align}
and substituting \eqref{eq:U(s)} into \eqref{eq:xdot} yields the result.
\end{proof}

It follows from Corollary~\ref{cor:Yresponse} that,  in the absence of disturbances and measurement noise, the response of $y$ to a command $r$ is the same with the observer as with state feedback, except for the discrepancy caused by the error in the Pad\'e approximation to the time delay.  Specifically, in the absence of disturbances and noise,  the command response is given by
\begin{align}\notag
Y(s) &= G(s)\left(1+\bar{\Delta}(s)\right)^{-1}\\ &\times\left(1+K(sI_n-A)^{-1}B\right)^{-1}HR(s),\label{eq:YandDelta}
\end{align}
where $\bar{\Delta}(s)=K(sI_n-A+BK)^{-1}L_1\bar{F}(s)\Delta(s)G(s)$.
Hence we see that \emph{the bandwidth of the state feedback loop should be limited to the frequency range for which $\Delta(j\omega)\approx 0$}. Higher order  Pad\'e approximations can approximate the delay over a wider frequency range at the expense of the additional zeros in $\bar{C}_{obs}(s)$  amplifying sensor noise.



 \section{Decomposition of Optimal Estimator with Delay}
 \label{sec:optimal}
 We now describe special decomposition properties of the estimation problem from Section~\ref{sec:continuous} that arise when the estimator is \emph{optimal}.
To formulate this problem, we consider the augmented system
 \eqref{eq:xaugdot}-\eqref{eq:aug_defs}, suppose that $d$ and $n$ are
 zero mean Gaussian white noise processes with covariances $V\geq 0$
 and $W>0$, respectively, and   assume that $(A_{aug},C_{aug})$ is
 observable and   $(A_{aug},E_{aug})$ is controllable. Then the
 optimal estimator gains satisfy
\begin{equation}\label{eq:Laugopt}
    L_{aug} = \Sigma_{aug} C_{aug}^T W^{-1},
\end{equation}
where $\Sigma_{aug}$  is the unique positive definite solution to the algebraic Riccati equation
\begin{align}\notag
 0 = A_{aug}\Sigma_{aug} &+ \Sigma_{aug} A_{aug}^T +V_{aug}\\&- \Sigma_{aug} C_{aug}^TW^{-1}C_{aug}\Sigma_{aug},\label{eq:Ric_aug}
\end{align}
with $V_{aug} = E_{aug}VE_{aug}^T$. 
 
 By substituting \eqref{eq:what} into \eqref{eq:obs} and applying \eqref{eq:aug_defs}, we see that the optimal estimator has state variable description
 \begin{align}\label{eq:xaughat}
 \dot{\hat{x}}_{aug} &= A_{aug}^{CL}\hat{x}_{aug}+L_{aug}w^m + B_{aug}u\\
 \hat{w}_N &= C_{aug}\hat{x}_{aug},\label{eq:wNhat}
 \end{align}
 where $A_{aug}^{CL} = A_{aug}-L_{aug}C_{aug}$ satisfies
 \begin{equation}\label{eq:ACLaug}
    A_{aug}^{CL}  =   \begin{bmatrix}A-L_1D_NC & -L_1C_N\\B_NC-L_2D_NC&A_N-L_2C_N\end{bmatrix}.
 \end{equation}
  
  We now characterize the $N+n$ eigenvalues of \eqref{eq:ACLaug} when $L_{aug}$ is the optimal estimator gain given by \eqref{eq:Laugopt}. 
   
 \begin{proposition}\label{prop:Ham} Consider the optimal estimator defined by \eqref{eq:Laugopt}-\eqref{eq:ACLaug}. Define   $L = \Sigma C^TW^{-1}$, where  $\Sigma$ is the unique positive  definite solution to the Riccati equation
 \begin{equation}\label{eq:Ric_lower}
     0 = A\Sigma + \Sigma A^T-\Sigma C^TW^{-1}C\Sigma +EVE^T.
 \end{equation}
 Assume that the eigenvalues of $A_N$, $A$,  and $A-LC$ are disjoint. Then $A_{aug}^{CL}$ has
 \begin{enumerate}[(i)]
 \item $N$ eigenvalues identical to those of $A_N$, and
 \item $n$ eigenvalues identical to those  of $A-LC$. 
\end{enumerate}

 \end{proposition}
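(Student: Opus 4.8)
The plan is to reduce the claim to the single characteristic‑polynomial identity
\begin{equation*}
\det\!\big(sI_{n+N}-A_{aug}^{CL}\big)=\det(sI_N-A_N)\,\det(sI_n-A+LC),
\end{equation*}
which gives (i) and (ii) at once. To obtain it I would use the Kalman‑filter return‑difference (spectral‑factorization) equality attached to~\eqref{eq:Ric_aug}: setting $\phi_{aug}(s):=1+C_{aug}(sI_{n+N}-A_{aug})^{-1}L_{aug}$, the Riccati equation implies
\begin{equation*}
W+C_{aug}(sI-A_{aug})^{-1}V_{aug}(-sI-A_{aug}^{T})^{-1}C_{aug}^{T}=\phi_{aug}(s)\,W\,\phi_{aug}(-s),
\end{equation*}
and the analogous identity holds for the reduced problem~\eqref{eq:Ric_lower} with $\phi(s):=1+C(sI_n-A)^{-1}L$ and $V_{aug}$ replaced by $EVE^{T}$. (Equivalently, one can track the characteristic polynomial of the Hamiltonian matrix of~\eqref{eq:Ric_aug}; the bookkeeping is the same.)

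The crucial step is that the two left‑hand sides agree. Because $V_{aug}=E_{aug}VE_{aug}^{T}$ and the disturbance reaches the Pad\'e states only through the plant output, the map from $d$ to the Pad\'e output is $C_{aug}(sI-A_{aug})^{-1}E_{aug}=P_N(s)G_d(s)$, so the augmented left‑hand side equals $W+P_N(s)P_N(-s)\,G_d(s)VG_d(-s)^{T}$. Here I invoke the defining property of an $N$-th order Pad\'e approximation --- it is allpass of unit gain --- so that $P_N(s)P_N(-s)=1$; the augmented left‑hand side therefore collapses onto $W+G_d(s)VG_d(-s)^{T}=W+C(sI_n-A)^{-1}EVE^{T}(-sI_n-A^{T})^{-1}C^{T}$, which is precisely the left‑hand side for the reduced problem. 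Hence $\phi_{aug}(s)\phi_{aug}(-s)=\phi(s)\phi(-s)$.

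It remains to convert this equality of spectral densities into the polynomial identity. Writing $\chi_M(s):=\det(sI-M)$ and using block‑triangularity to get $\chi_{A_{aug}}=\chi_A\chi_{A_N}$, we have $\phi_{aug}(s)=\chi_{A_{aug}^{CL}}(s)/\!\big(\chi_A(s)\chi_{A_N}(s)\big)$ and $\phi(s)=\chi_{A-LC}(s)/\chi_A(s)$, so clearing denominators turns $\phi_{aug}(s)\phi_{aug}(-s)=\phi(s)\phi(-s)$ into
\begin{equation*}
\chi_{A_{aug}^{CL}}(s)\,\chi_{A_{aug}^{CL}}(-s)=\big(\chi_{A_N}\chi_{A-LC}\big)(s)\,\big(\chi_{A_N}\chi_{A-LC}\big)(-s).
\end{equation*}
Both $\chi_{A_{aug}^{CL}}$ and $\chi_{A_N}\chi_{A-LC}$ are monic of degree $n+N$ with all roots in the open left half plane --- the former because the augmented Kalman filter is asymptotically stable under the standing observability/controllability hypotheses and $\Sigma_{aug}>0$, the latter because the $N$ Pad\'e poles lie in the open left half plane and $A-LC$ is Hurwitz for the same reason applied to~\eqref{eq:Ric_lower}. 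Since two monic polynomials of the same degree with all roots in the open left half plane that have equal products $p(s)p(-s)$ must themselves be equal, the two polynomials coincide, establishing the identity and hence (i) and (ii).

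I expect the only genuine obstacle to be the collapse of the augmented spectral density onto the reduced one, which depends entirely on the two structural facts $V_{aug}=E_{aug}VE_{aug}^{T}$ (disturbance supported on the plant block) and the allpass identity $P_N(s)P_N(-s)=1$; everything else --- the return‑difference equality, the determinant formulas, and uniqueness of the monic stable spectral factor --- is routine. The disjointness hypotheses on the spectra of $A_N$, $A$, and $A-LC$ enter only to keep the two Riccati problems well posed (observability/controllability of the augmented pair) and to ensure the determinant expressions for $\phi_{aug}$ and $\phi$ involve no spurious pole/zero cancellations.
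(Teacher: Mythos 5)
Your proof is correct, but it takes a genuinely different route from the paper's proof of Proposition~\ref{prop:Ham}. The paper argues through the Hamiltonian matrix \eqref{eq:Hamaug} of the augmented Riccati equation: for each eigenvalue $\lambda$ of $A_N$ it constructs an explicit eigenvector of $H_{aug}$ using $P_N(-\lambda)=0$, and for each stable eigenvalue of the lower-order Hamiltonian \eqref{eq:Hblock} it lifts the corresponding eigenvector to $H_{aug}$ using $P_N(s)P_N(-s)=1$ and $D_N=\pm 1$, then identifies the eigenvalues of $A_{aug}^{CL}$ with the open-left-half-plane eigenvalues of $H_{aug}$. You instead combine the Kalman return-difference equality with $C_{aug}(sI-A_{aug})^{-1}E_{aug}=P_N(s)G_d(s)$ and the allpass identity to equate the augmented and reduced spectral densities, pass to $\phi_{aug}(s)\phi_{aug}(-s)=\phi(s)\phi(-s)$, rewrite both sides via $1+C(sI-A)^{-1}L=\det(sI-A+LC)/\det(sI-A)$ and $\chi_{A_{aug}}=\chi_A\chi_{A_N}$, and conclude $\chi_{A_{aug}^{CL}}=\chi_{A_N}\chi_{A-LC}$ by uniqueness of the monic Hurwitz spectral factor; the needed strict stability of $A_{aug}^{CL}$, $A_N$, and $A-LC$ is guaranteed by the standing observability/controllability assumptions made at the start of Section~\ref{sec:optimal}. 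Notably, this is essentially the same return-difference machinery the paper itself deploys later to prove Proposition~\ref{prop:unctrb2}, so your argument obtains Proposition~\ref{prop:Ham} as a by-product of that computation. What your route buys: the full characteristic-polynomial identity, so the eigenvalue lists come with multiplicities, and in fact no use of the disjointness hypothesis at all; your closing remark that disjointness is needed to avoid ``spurious pole/zero cancellations'' is the one imprecision---the determinant formulas hold as rational-function identities, so the final polynomial identity is unconditional---but it is harmless. What the paper's route buys: an explicit, constructive description of the eigenvectors of $H_{aug}$ that makes visible exactly where the two structural facts (disturbance entering upstream of the delay, allpass Pad\'e) enter, at the price of needing the disjointness assumption so that the constructed eigenvectors are well defined.
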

 
 \begin{proof} 
 It is well known that the eigenvalues of $A_{aug}^{CL}$ are equal to the open left half plane  eigenvalues of the  Hamiltonian matrix associated with the Riccati equation \eqref{eq:Ric_aug}, given by
\begin{equation}\label{eq:Hamaug}
H_{aug} =  \begin{bmatrix}A_{aug}^T & -C_{aug}^TW^{-1}C_{aug}\\
    -E_{aug}VE_{aug}^T & -A_{aug}\end{bmatrix}.
\end{equation}
Substituting \eqref{eq:aug_defs} into \eqref{eq:Hamaug} yields
\begin{equation}\label{eq:Hamblock}
  \scriptsize{
  H_{aug} = \begin{bmatrix}A^T & C^TB_N^T & -C^TD_N^TW^{-1}D_NC & -C^TD_N^TW^{-1}C_N\\0 &A_N^T & -C_N^TW^{-1}D_NC&-C_N^TW^{-1}C_N\\
    -EVE^T & 0 & -A & 0\\
    0&0&-B_NC&-A_N\end{bmatrix}
    }
\end{equation}
Let $\lambda$ be \emph{any} eigenvalue of $H_{aug}$ and let $\nu$ denote an associated right eigenvector:  $H_{aug}\nu=\lambda \nu$. Partition $\nu$ as $\nu = \begin{bmatrix}w^T&v^T&x^T&y^T\end{bmatrix}^T$, where $w,x\in\mathbb{R}^n$ and $v,y\in\mathbb{R}^N$. Then \eqref{eq:Hamblock} implies that 
\begin{align}
A^Tw+ C^TB_N^Tv + C^TD_N^TW^{-1}(D_NCx+C_Ny)&= \lambda w ,\label{eq:1}\\
A^T_Nv +  C_N^TW^{-1}(D_NCx + C_Ny) &= \lambda v, \label{eq:2}\\
-EVE^Tw - Ax &= \lambda x,\label{eq:3}\\
 -B_NCx - A_Ny&= \lambda y.\label{eq:4}
\end{align}
It follows directly from \eqref{eq:3}-\eqref{eq:4} that $x$ and $y$ must satisfy
 \begin{align}
      x &= -(\lambda I_n+A)^{-1}EVE^Tw,  \label{eq:xi}  \\
      y &=  -(\lambda I_N+A_N)^{-1}B_NCx,\label{eq:yi}
 \end{align}
 and invoking \eqref{eq:PN} yields the additional constraint
 \begin{equation}
    D_NCx + C_Ny = P_N(-\lambda)Cx.
 \end{equation}
We now treat the two sets of eigenvalues separately.
\begin{enumerate}[(i)]
\item Let $\lambda$ be an eigenvalue of $A_N$ and let $v\in\mathbb{C}^N$ denote an associated right eigenvector of $A_N^T$:
\begin{equation}\label{eq:vi}
A_N^Tv=\lambda v. 
\end{equation}
The fact that the Pad\'e approximation $P_N(s)$ has right half plane zeros at the mirror images of the eigenvalues of $A_N$ implies that $P_N(-\lambda_i)=0$. Hence \eqref{eq:2} is satisfied by definition, and \eqref{eq:1} will hold provided that
 \begin{equation}\label{eq:wi}
      w = (\lambda I_n-A^T)^{-1}C^TB_N^Tv. 
 \end{equation}

\item Denote the Hamiltonian matrix associated with the Riccati equation \eqref{eq:Ric_lower} by 
 \begin{equation}\label{eq:Hblock}
  H = \begin{bmatrix}A^T&-C^TD_N^TW^{-1}D_NC\\-EVE^T & -A\end{bmatrix}.
\end{equation} 
Let   $\lambda$ be an open left half plane eigenvalue of $H$ with right eigenvector  $\begin{bmatrix}w^T&x^T\end{bmatrix}^T$. Then $w$ and $x$ must satisfy \eqref{eq:3} and 
\begin{equation}\label{eq:1a}
A^Tw-C^TD_N^TW^{-1}D_NCx=\lambda w. 
\end{equation}
Define $y$ as in \eqref{eq:yi} and 
\begin{equation}\label{eq:vdef}
    v = (\lambda I_N-A_N)^{-1}C_N^TW^{-1}P_N(-\lambda)Cx.
\end{equation}    
Then $\nu = \begin{bmatrix}w^T&v^T&x^T&y^T\end{bmatrix}^T$ satisfies \eqref{eq:2}-\eqref{eq:4}. To show that $\nu$ is an eigenvector of $H_{aug}$ with eigenvalue $\lambda$ it remains to prove that \eqref{eq:1} holds. Substituting \eqref{eq:1a} and applying \eqref{eq:yi} and \eqref{eq:vdef} shows that \eqref{eq:1} reduces to 
\begin{align}\notag
    &C^T B_N^Tv -C^TD_N^TW^{-1}C_Ny\\ &= C^TB_N^T(\lambda I_N-A_N^T)^{-1}C_N^TW^{-1}P_N(-\lambda)Cx\notag\\
     &+ C^TD_N^TW^{-1}C_N(\lambda I_N+A_N)^{-1}B_NCx.\label{eq:1x}
\end{align}
It follows from \eqref{eq:PN} that $C_N(\lambda I_N\pm A_N)^{-1}B_N=D_N-P_N(\mp \lambda)$, and thus that
\begin{align}\notag 
&C^T B_N^Tv- C^TD_N^TW^{-1}C_Ny  \\
&\quad =  (D_N-P_N(\lambda))W^{-1}P_N(-\lambda)\notag\\ &\quad +D_N^TW^{-1}(D_N-P_N(-\lambda))\label{eq:1y}
\end{align}
The fact that $P_N(s)$ is a Pad\'e approximation implies that $P_N(s)P_N(-s)=1$ and thus that $D_N = \pm 1$. Together these facts show that \eqref{eq:1y} reduces to zero, and thus that $\nu$ satisfies \eqref{eq:1} and is an eigenvector of $H_{aug}$.

\end{enumerate}
\end{proof}

Proposition~\ref{prop:Ham} implies that  if the
estimator is optimal  then the value of the gain $L$ does not depend
on the delay in the feedback measurement. Essential to the proof are the facts that the process noise $d$ directly affects only the plant states upstream of the delay and that the delay and its Pad\'e   approximation are allpass.

We next show that the eigenvalues of $A_N$ that are shared between $A_{aug}$ and $A_{aug}^{CL}$ are uncontrollable from the estimator gain $L_{aug}$. To do so, we first present a preliminary result showing that if certain eigenvalues of a system are preserved under state feedback, then these eigenvalues are unobservable in the state feedback.  
 \begin{lemma}\label{lem:FGK}Consider the linear system $\dot{x}=Fx+Gu$, where $x\in\mathbb{R}^n$, $u\in \mathbb{R}$, together with   the state feedback $u=-Kx$.  Assume that and $(F,G)$ is controllable and that $F$ and $F-GK$ have $m$ eigenvalues in common. Then these eigenvalues are unobservable eigenvalues of $(F,K)$. 
\end{lemma}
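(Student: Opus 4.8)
The plan is to argue directly with the Popov--Belevitch--Hautus (PBH) eigenvector tests, which reduces the claim to a one--line case analysis. Fix an eigenvalue $\lambda$ shared by $F$ and $F-GK$, and choose a nonzero eigenvector $u$ of $F-GK$ at $\lambda$, so that $(F-GK)u=\lambda u$. Rearranging gives $(\lambda I_n-F)u=-(Ku)\,G$, and the whole proof hinges on the scalar $c:=Ku$.

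First I would rule out $c\neq 0$. If $c\neq 0$ then $G=-c^{-1}(\lambda I_n-F)u$ lies in the column space of $\lambda I_n-F$, so adjoining $G$ as an extra column cannot raise the rank: $\rank\begin{bmatrix}\lambda I_n-F & G\end{bmatrix}=\rank(\lambda I_n-F)<n$, the strict inequality holding because $\lambda$ is an eigenvalue of $F$. This contradicts the PBH controllability test for $(F,G)$. Hence $c=Ku=0$.

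With $c=0$ the relation $(\lambda I_n-F)u=-(Ku)G$ collapses to $(\lambda I_n-F)u=0$: the vector $u$ is an eigenvector of $F$ for $\lambda$ that is annihilated by $K$. Equivalently $\rank\begin{bmatrix}\lambda I_n-F\\ K\end{bmatrix}<n$, which is precisely the PBH condition saying that $\lambda$ is an unobservable eigenvalue of $(F,K)$. Running this argument at each common eigenvalue yields the lemma.

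I do not expect a genuine obstacle: the only case requiring thought is $Ku\neq 0$, and it is dispatched immediately by controllability. The one point that could need extra care is if the $m$ common eigenvalues are to be counted with multiplicity; then I would supplement the eigenvector argument with the scalar identity $\det(sI_n-F+GK)=\det(sI_n-F)\bigl(1+K(sI_n-F)^{-1}G\bigr)$, noting that the common factor of $\det(sI_n-F)$ and the numerator polynomial $K\,\operatorname{adj}(sI_n-F)G$ is exactly the unobservable polynomial of $(F,K)$ when $(F,G)$ is controllable. For the application in the paper --- the $N$ distinct Pad\'e eigenvalues --- the eigenvector version above already suffices.
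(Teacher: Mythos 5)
Your argument is correct, and for a simple common eigenvalue it is the same as the paper's: take an eigenvector $u$ of $F-GK$, write $(\lambda I_n-F)u=-(Ku)G$, and use the PBH controllability test together with $\rank(\lambda I_n-F)<n$ to force $Ku=0$ and $(\lambda I_n-F)u=0$, which is exactly the PBH unobservability condition for $(F,K)$. Where you genuinely diverge is the multiplicity case, which is where the paper spends most of its effort: it runs an induction along the Jordan chains of $F$ and $F-GK$ (using that controllability forces geometric multiplicity one) to show $Kv_i=0$ for every generalized eigenvector $v_1,\dots,v_M$, so the entire $M$-dimensional Jordan block is unobservable. You instead invoke the determinant identity $\det(sI_n-F+GK)=\det(sI_n-F)\bigl(1+K(sI_n-F)^{-1}G\bigr)$ and the fact that, under controllability of $(F,G)$, the common factor of $\det(sI_n-F)$ and $K\operatorname{adj}(sI_n-F)G$ is the unobservable polynomial of $(F,K)$; since a common root of multiplicity $M$ of the two characteristic polynomials divides that numerator to order $M$, the unobservable multiplicity is at least $M$. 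That route is shorter and counts multiplicities correctly, but it leans on a standard minimal-realization fact stated without proof, whereas the paper's chain argument is self-contained and delivers the slightly stronger structural conclusion (the Jordan block of $\lambda$ in the canonical form has all corresponding entries of $K$ equal to zero), which is the form actually reused in the eigenvalue-decomposition results. As you note, for the application to the $N$ distinct Pad\'e eigenvalues the eigenvector-level argument alone already suffices.
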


\begin{proof} 
 Assume that $\lambda$ is a common eigenvalue of $F$ and $F-GK$. The latter fact implies there exists a nonzero $v_1^{\prime}$ such that $(\lambda I-F+GK)v^{\prime}_1=0$;  equivalently
\begin{equation}\label{eq:v1prime}
  \begin{bmatrix}
     \lambda I - F & G
  \end{bmatrix}\begin{bmatrix}
     v_1^{\prime}\\
     Kv_1^{\prime}
  \end{bmatrix} =0.
\end{equation}
Since $\lambda$ is also an eigenvalue of $F$, $\rank\left(\lambda I -F\right) < n$, and $(F, G)$ controllable implies that $\rank\begin{bmatrix}\lambda I -F&G\end{bmatrix} = n$. It follows that  $G$ cannot be a linear combination of the columns of $\lambda I - F$, 
and thus the solution $v_1^{\prime}$ to \eqref{eq:v1prime} must satisfy
 \begin{equation}\label{eq:FGK_F_eval_v1pri}
\left(\lambda I - F\right)v_1^{\prime} = 0
\end{equation}
 \begin{equation}\label{eq:K_v1pri}
Kv_1^{\prime} = 0.
\end{equation}
Together, \eqref{eq:FGK_F_eval_v1pri}-\eqref{eq:K_v1pri} imply that  
\begin{equation}
  \rank\begin{bmatrix}
     \lambda I - F\\
     K
  \end{bmatrix} < n,
\end{equation}
and hence $\lambda$ is an unobservable eigenvalue of $\left(F,K\right)$. 

Next suppose that $\lambda$ is a common eigenvalue of $F$ and $F-GK$ with algebraic multiplicity $M>1$. Then controllability implies that the geometric multiplicity of $\lambda$ is equal to one in both cases. We  now show that $\lambda$ is an unobservable eigenvalue of $(F, K)$ with multiplicity $M$. Specifically, if  $(\bar{F}, \bar{K})$  denotes the Jordan canonical form of $(F, K)$, then $\bar{F}$ will have a single $M$-dimensional Jordan block associated with $\lambda$ for which all corresponding entries of $\bar{K}$ are equal to zero.

First, there exists a chain  of generalized eigenvectors $v_1,v_2,...,v_M$  with 
  \begin{align}
     \left(\lambda I - F\right)v_1 &= 0,  \label{eq:F_eval1}\\
     \left(\lambda I - F\right)v_{k+1} &= v_k,\;k=1,\ldots,M-1. \label{eq:F_evalk}
     \end{align}
     A similarity transformation that places the system in Jordan canonical form is given by $Q=\begin{bmatrix}v_1&\cdots&v_M&v_{M+1}&\cdots&v_n\end{bmatrix}$, where $v_{M+1} \ldots v_n$ represent the eigenstructure of the remaining eigenvalues. Then $\bar{F}=QFQ^{-1}$ and $\bar{K}=KQ$, and the result will be proven if we can show that $Kv_i=0,\; i=1,\ldots M$.
     
There also exists a chain of generalized eigenvectors $v_1^\prime,v_2^\prime,...,v_M^\prime$ with
     \begin{align}
       \left(\lambda I - F+GK\right)v_1^\prime &= 0,  \label{eq:F_eval1p}\\
     \left(\lambda I - F+GK\right)v_{k+1}^\prime &= v_k^\prime,\;k=1,\ldots,M-1. \label{eq:F_evalkp}  
 \end{align}
Since the geometric multiplicity of $\lambda$ is equal to one, it follows from \eqref{eq:FGK_F_eval_v1pri} and \eqref{eq:F_eval1} that 
\begin{equation}\label{eq:v1}
   v_1^{\prime} = \alpha_1 v_1
 \end{equation}
for some nonzero $\alpha_1$. Hence \eqref{eq:K_v1pri} and \eqref{eq:v1} imply that 
 \begin{equation}\label{eq:K_v1}
 Kv_1 = 0.
\end{equation}




It remains to show that $Kv_i=0, \;i=2,\ldots, M$. We shall also need to generalize the intermediate result \eqref{eq:v1} to apply to the remaining generalized eigenvectors $v_i^\prime$ associated with $\lambda_i$. 
Hence  we assume  there exists $\ell<M$ and nonzero scalars $\alpha_1, \ldots, \alpha_\ell$ such that, for $i=1,\ldots,\ell$,
\begin{align}
  v_i^\prime &=\alpha_1v_i+\ldots+\alpha_i v_1\label{eq:viprime}\\
    Kv_i&=0,\label{eq:Kvi0}
\end{align}
and show    that \eqref{eq:viprime}-\eqref{eq:Kvi0} must also hold for $i=\ell+1$. To do so, we first apply \eqref{eq:F_evalkp} and \eqref{eq:viprime} with   $k=\ell$,  and \eqref{eq:F_evalk} with $k=1,\ldots,\ell$, yielding
\begin{align}
(\lambda I-F+GK)v_{\ell+1}^\prime &= v_\ell^\prime\notag\\
&= \alpha_1v_\ell+\ldots+\alpha_\ell v_1\notag\\
&= (\lambda I-F)(\alpha_1 v_{\ell+1}+\ldots+\alpha_l v_2).\label{eq:tempeqn1}
\end{align}
Rearranging \eqref{eq:tempeqn1} results in
\begin{equation}\label{eq:FGK_evalkplus1_matrix_kplus1}
  \begin{bmatrix}
     \lambda I - F & G
  \end{bmatrix}\begin{bmatrix}
     v_{\ell+1}^{\prime} - \alpha_1 v_{\ell+1}- ... - \alpha_{\ell} v_2\\
     Kv_{\ell+1}^{\prime}
  \end{bmatrix} = 0,
\end{equation}
thus implying
 \begin{equation}\label{eq:FGK_F_eval_vkplus1pri}
\left(\lambda I - F\right)\left( v_{\ell+1}^{\prime} - \alpha_1 v_{k+1}- ... - \alpha_{\ell} v_2 \right)= 0
\end{equation}
 \begin{equation}\label{eq:K_vkplus1_pri}
Kv_{\ell+1}^{\prime} = 0.
\end{equation}
 The fact that $\lambda$ has   geometric multiplicity  equal to one, together with  \eqref{eq:FGK_F_eval_v1pri} and    \eqref{eq:FGK_F_eval_vkplus1pri}, implies that  $v_{\ell+1}^{\prime} - \alpha_1 v_{\ell+1}- \ldots - \alpha_{\ell} v_2 = \alpha_{\ell+1} v_1$, for some where $\alpha_{\ell+1}\neq 0$, and thus
 \begin{equation}
     v_{\ell+1}^{\prime} = \alpha_1 v_{\ell+1}+ \ldots + \alpha_{\ell} v_2 + \alpha_{\ell+1} v_1.
\end{equation}
 It then follows from \eqref{eq:Kvi0} and  \eqref{eq:K_vkplus1_pri}  that 
 \begin{equation}\label{eq:K_vkp1}
Kv_{\ell+1} = 0,
\end{equation}
thus completing the induction. 
\end{proof}

 \begin{proposition}\label{prop:unctrb2}
 The $N$ eigenvalues of $A_{aug}^{CL}$ that are identical to those of $A_N$ are uncontrollable eigenvalues of $(A_{aug},L_{aug})$. Furthermore, a minimal realization of $L_{est}^{aug}(s)$ defined in \eqref{eq:Lestaug} is given by $(A, L, C)$ with transfer function
  \begin{equation}\label{eq:Lest}
     L_{est}(s) = C(sI_n-A)^{-1}L,
  \end{equation}
  with $L$ given by Proposition~\ref{prop:Ham}. 
 \end{proposition}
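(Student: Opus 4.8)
The plan is to obtain the uncontrollability statement from Lemma~\ref{lem:FGK} by duality, then to identify $L_{est}^{aug}(s)$ with $C(sI_n-A)^{-1}L$ via the return-difference identity and Proposition~\ref{prop:Ham}, and finally to deduce minimality of $(A,L,C)$ from the disjointness hypothesis of Proposition~\ref{prop:Ham}.

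First I would apply Lemma~\ref{lem:FGK} to the dual system, taking $F=A_{aug}^T$, $G=C_{aug}^T$, and $K=L_{aug}^T$, so that $F-GK=(A_{aug}-L_{aug}C_{aug})^T=(A_{aug}^{CL})^T$. Since the plant is single input, $G$ is a single column, and $(F,G)=(A_{aug}^T,C_{aug}^T)$ is controllable because $(A_{aug},C_{aug})$ is observable; hence the hypotheses of Lemma~\ref{lem:FGK} on $(F,G)$ hold. The block-lower-triangular form of $A_{aug}$ in \eqref{eq:aug_defs} shows that the eigenvalues of $F$ are those of $A$ together with those of $A_N$, while Proposition~\ref{prop:Ham} shows that the eigenvalues of $F-GK$ are those of $A_N$ together with those of $A-LC$; the hypothesis of Proposition~\ref{prop:Ham} that the spectra of $A_N$, $A$, and $A-LC$ are pairwise disjoint then forces the eigenvalues shared by $F$ and $F-GK$ to be exactly the $N$ eigenvalues of $A_N$. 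Lemma~\ref{lem:FGK} therefore identifies these as unobservable eigenvalues of $(F,K)=(A_{aug}^T,L_{aug}^T)$, which is precisely the statement that the $N$ eigenvalues of $A_{aug}^{CL}$ equal to those of $A_N$ are uncontrollable eigenvalues of $(A_{aug},L_{aug})$.

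For the realization claim, write $\chi_M(s)=\det(sI-M)$. Because $L_{est}^{aug}(s)=C_{aug}(sI_{n+N}-A_{aug})^{-1}L_{aug}$ is scalar and $L_{aug}C_{aug}$ has rank one, the return-difference identity gives
\begin{equation*}
1+L_{est}^{aug}(s)=\frac{\det\!\big(sI_{n+N}-A_{aug}+L_{aug}C_{aug}\big)}{\det\!\big(sI_{n+N}-A_{aug}\big)}=\frac{\chi_{A_{aug}^{CL}}(s)}{\chi_{A_{aug}}(s)}.
\end{equation*}
Block-triangularity of $A_{aug}$ gives $\chi_{A_{aug}}(s)=\chi_A(s)\,\chi_{A_N}(s)$, and Proposition~\ref{prop:Ham}, read as a statement about eigenvalues with multiplicity and using that $A_N$ has $N$ distinct eigenvalues disjoint from those of $A-LC$, gives $\chi_{A_{aug}^{CL}}(s)=\chi_{A_N}(s)\,\chi_{A-LC}(s)$. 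Cancelling $\chi_{A_N}(s)$ and applying the return-difference identity in reverse yields $1+L_{est}^{aug}(s)=\chi_{A-LC}(s)/\chi_A(s)=1+C(sI_n-A)^{-1}L$, hence $L_{est}^{aug}(s)=C(sI_n-A)^{-1}L=L_{est}(s)$; thus $(A,L,C)$ is a realization of $L_{est}^{aug}(s)$ of order $n$, in which the $N$ Pad\'e modes isolated in the first part are exactly the ones removed. Finally, $(A,L,C)$ is minimal: $(A,C)$ is observable by the standing assumption of Section~\ref{sec:continuous}, and $(A,L)$ is controllable because if some $\mu\in\sigma(A)$ admitted a left eigenvector $q$ with $q^{*}L=0$, then $q^{*}(A-LC)=q^{*}A=\mu q^{*}$, placing $\mu$ in $\sigma(A-LC)\cap\sigma(A)$ and contradicting the disjointness hypothesis of Proposition~\ref{prop:Ham}.

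The computations involved (the return-difference determinant identity and the block-triangular determinant) are routine. The step that needs the most care is reading Proposition~\ref{prop:Ham} as the polynomial factorization $\chi_{A_{aug}^{CL}}=\chi_{A_N}\,\chi_{A-LC}$ including multiplicities, since its proof exhibits only one eigenvector per open-left-half-plane eigenvalue of the reduced Hamiltonian and thus covers the repeated-eigenvalue case of $A-LC$ only after an additional generalized-eigenvector argument; the other point to check is that the controllability/observability hypotheses transfer correctly to the dual so that Lemma~\ref{lem:FGK} applies verbatim.
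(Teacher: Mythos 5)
Your proof of the uncontrollability claim is essentially the paper's: the paper likewise invokes the dual of Lemma~\ref{lem:FGK} with $(A_{aug},C_{aug})$ observable and the shared eigenvalues being those of $A_N$ (via Proposition~\ref{prop:Ham} and the block-triangular structure in \eqref{eq:aug_defs}); your version merely spells out the dualization and the disjointness bookkeeping. For the identification $L_{est}^{aug}(s)=C(sI_n-A)^{-1}L$, however, you take a genuinely different route. The paper argues through the Kalman return difference equality: since $G_d^{aug}(s)=P_N(s)G_d(s)$ and $P_N(s)P_N(-s)=1$, the spectral density $W+G_d(s)VG_d(-s)$ associated with the augmented Riccati equation \eqref{eq:Ric_aug} coincides with that of the reduced equation \eqref{eq:Ric_lower}, so the return difference factors $1+L_{est}^{aug}(s)$ and $1+C(sI_n-A)^{-1}L$ must agree (implicitly by uniqueness of the stable, minimum-phase spectral factor). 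You instead use the rank-one determinant identity $1+L_{est}^{aug}(s)=\chi_{A_{aug}^{CL}}(s)/\chi_{A_{aug}}(s)$ together with the factorizations $\chi_{A_{aug}}=\chi_{A}\,\chi_{A_N}$ and $\chi_{A_{aug}^{CL}}=\chi_{A_N}\,\chi_{A-LC}$, the latter read off from Proposition~\ref{prop:Ham}. Both arguments are valid; the paper's does not need the eigenvalue decomposition at all (it in effect re-derives it from allpassness of $P_N$ and spectral-factor uniqueness), while yours is more elementary but leans on Proposition~\ref{prop:Ham} being a full characteristic-polynomial statement with multiplicities --- a caveat you rightly flag, since its proof exhibits only one eigenvector per eigenvalue of the reduced Hamiltonian. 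Your explicit minimality check of $(A,L,C)$ (observability of $(A,C)$ by standing assumption, controllability of $(A,L)$ via PBH and disjointness of $\sigma(A)$ and $\sigma(A-LC)$) is a welcome addition that the proposition asserts but the paper's proof does not verify.
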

 \begin{proof}
The dual of Lemma~\ref{lem:FGK} states that if $(F,H)$ is observable and $F$ and $F-LH$ share $m$ eigenvalues, then these eigenvalues are uncontrollable eigenvalues of $(F,L)$.   Consider the estimation problem for the augmented system $(A_{aug}, E_{aug}, C_{aug})$ and denote the open loop transfer function from disturbance $d$ to estimation error $w_N^M$ by $G_d^{aug}(s)=C_{aug}\left(sI-A_{aug}\right)^{-1}E_{aug}$. It is straightforward to show that $G_d^{aug}(s)=P_N(s)G_d(s)$, where $P_N(s)P_N(-s)=1$. Hence the return difference equality \cite{KS72}  associated with the Riccati equation \eqref{eq:Ric_aug} reduces to
\begin{equation}\label{eq:returndiffaug}
W+G_d(s)VG_d(-s)=\left(1+L_{est}^{aug}(s)\right)W\left(1+L_{est}^{aug}(-s)\right).
\end{equation}
The left hand side of \eqref{eq:returndiffaug} must satisfy the return difference equality for the lower order Riccati equation \eqref{eq:Ric_lower} 
\begin{align}
    W+G_d(s)VG_d(-s)&=(1+C(sI-A)^{-1}L)W \notag\\
    &\qquad\times(1+C(-sI-A)^{-1}L),
\end{align}
and thus $L_{est}^{aug}(s)$ reduces to \eqref{eq:Lest}.

 \end{proof}


We saw in Proposition~\ref{prop:F} that the optimal estimation error is governed by the tradeoff filter $\bar{F}(s)$ defined in \eqref{eq:Fs}. With optimal estimation,  Proposition~\ref{prop:unctrb2} implies that $\bar{F}(s)$ reduces to
 \begin{align}\label{eq:Fsopt}
     F(s) &= \left(1+L_{est}(s)\right)^{-1}\notag\\
     &= \left(1+C(sI_n-A)^{-1}L\right)^{-1},
 \end{align}
 and thus is independent of the time delay. The optimal estimation error thus has the form
  \begin{equation}\label{eq:Fs_tradeoff2}
   \begin{aligned}
     \tilde{W}_N(s)  = {}& \large[F(s) G(s)U(s)\large] \cdot  \Delta(s) && \text{(Pad\'e error)} \\
     &+ \large[ F(s) e^{-s\tau} G_d(s)\large] \cdot D(s) && \text{(disturbance)} \\
     & + \large[F(s)-1\large] \cdot N(s), &&\text{(noise)}
   \end{aligned}
 \end{equation}
and depends on the time delay only through the Pad\'e approximation error $\Delta(s)$.

 \section{An Alternate Control Architecture} 
 \label{sec:alternate}
We now exploit the result of Proposition~\ref{prop:unctrb2} by proposing an alternate control architecture for the compensator that has several properties  remarkably similar to those of the Smith Predictor
   \cite{Smith57}.

 The state equations \eqref{eq:obs}-\eqref{eq:what} imply that the estimator for $\hat{w}_N$ in Fig.~\ref{fig:continuous_system} satisfies
 \begin{equation}     \label{eq:WNhat(s)} 
     \hat{W}_N(s)     = C_{aug}(sI_{n+N}-A_{aug})^{-1}L_{aug}\tilde{W}^m_N(s) + V_N(s), 
 \end{equation}
 where $V_N(s)=P_N(s)G(s)U(s)$.   Suppose, as shown in Fig.~\ref{fig:F}, that we implement this estimator using separate subsystems for the responses to the measured estimation error $\tilde{w}_N^m$ and to the input $u$. Doing so may seem counterintuitive as it leads to a compensator with higher dynamical order than that   depicted in Fig.~\ref{fig:continuous_system}. Indeed, the additional dynamics may imply that the compensator is no longer stabilizing. Nevertheless, we shall show that if the estimator is \emph{optimal}, then this control architecture has several interesting features, and we shall state conditions under which it does stabilize the plant. Then in Section~\ref{sec:Smith} we describe similarities between the architecture of Fig.~\ref{fig:F} and the Smith Predictor.
 
\begin{figure}[htbp]
\centerline{\epsfig{file=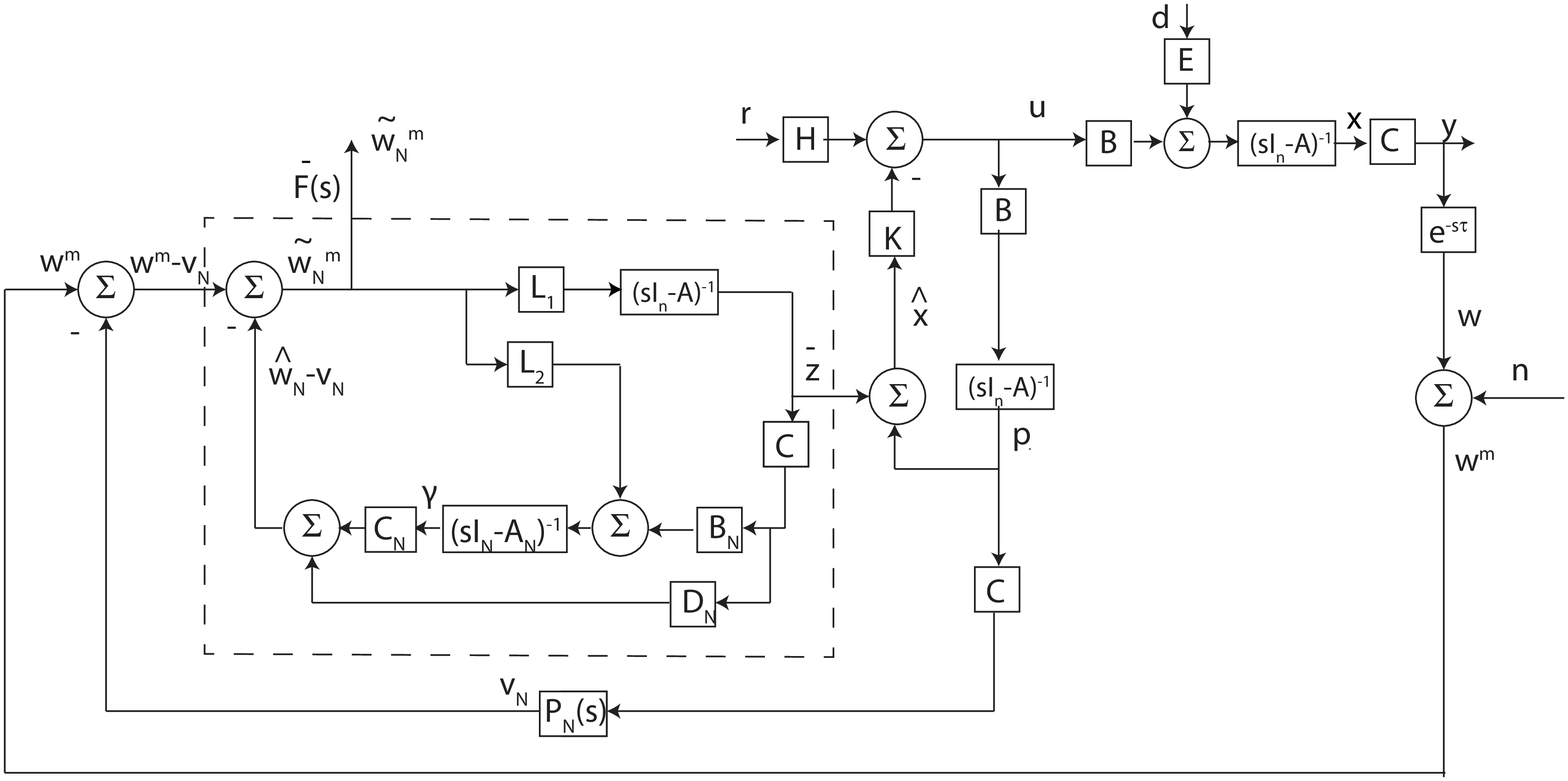,clip=,width=3.5in}}
\caption{Rearranged block diagram depicting the tradeoff filter $\bar{F}(s)$.}
\label{fig:F}
\end{figure}

 We have seen in Section~\ref{sec:optimal} that if the estimator gain is optimal, then \eqref{eq:WNhat(s)} reduces to 
  \begin{equation}\label{eq:WNhat(s)_opt}
     \hat{W}_N(s) = C(sI_n-A)^{-1}L\tilde{W}^m_N(s) + V_N(s).
 \end{equation}
A derivation similar to that used to prove Proposition~\ref{prop:F} shows that the tradeoff  \eqref{eq:Fs_tradeoff} between response to noise and response to disturbances and Pad\'e approximation error continues to hold, with $\bar{F}(s)$  defined by  \eqref{eq:Fs}  replaced by \eqref{eq:Fsopt}
 \begin{equation}\notag
     F(s) = \left(1+L_{est}(s)\right)^{-1}.
 \end{equation}
Using this fact, the block diagram in Fig.~\ref{fig:F} simplifies to that in Fig.~\ref{fig:F_optimal}, whose properties we now explore. 
 
To begin, we note that the observer based compensator $C_{obs}(s)$ has a  state variable description
\begin{equation}\label{eq:xCdot}
\dot{x}_C = A_Cx_C+B_Cw^m, \qquad u = -C_Cx_C,
\end{equation}
where $ x_C = \begin{bmatrix}z^T&p^T&\beta^T \end{bmatrix}^T$,
\begin{align}\label{eq:AC}
A_C &= \begin{bmatrix}A-LC&-LD_NC&-LC_N\\
   -BK&A-BK&0_{n\times N}\\
   0_{N\times n}&B_NC&A_N\end{bmatrix}, \; B_C=\begin{bmatrix}L\\0_{n\times 1}\\0_{N\times 1}\end{bmatrix},    
\end{align}
and $C_C=\begin{bmatrix}K&K&0_{1\times N}\end{bmatrix}$. 
The formula for the inverse of a block matrix and some algebra shows that
\begin{align}\label{eq:Cobs_re}
 C&_{obs}(s) =C_C(sI_{2n+N}-A_C)^{-1}B_C  \\&= \frac{K(sI_n-A+BK+LC)^{-1}L}{1+G(s)K(sI_n-A+BK+LC)^{-1}L(1-P_N(s))}.\notag
\end{align}

\begin{figure}[htbp]
\centerline{\epsfig{file=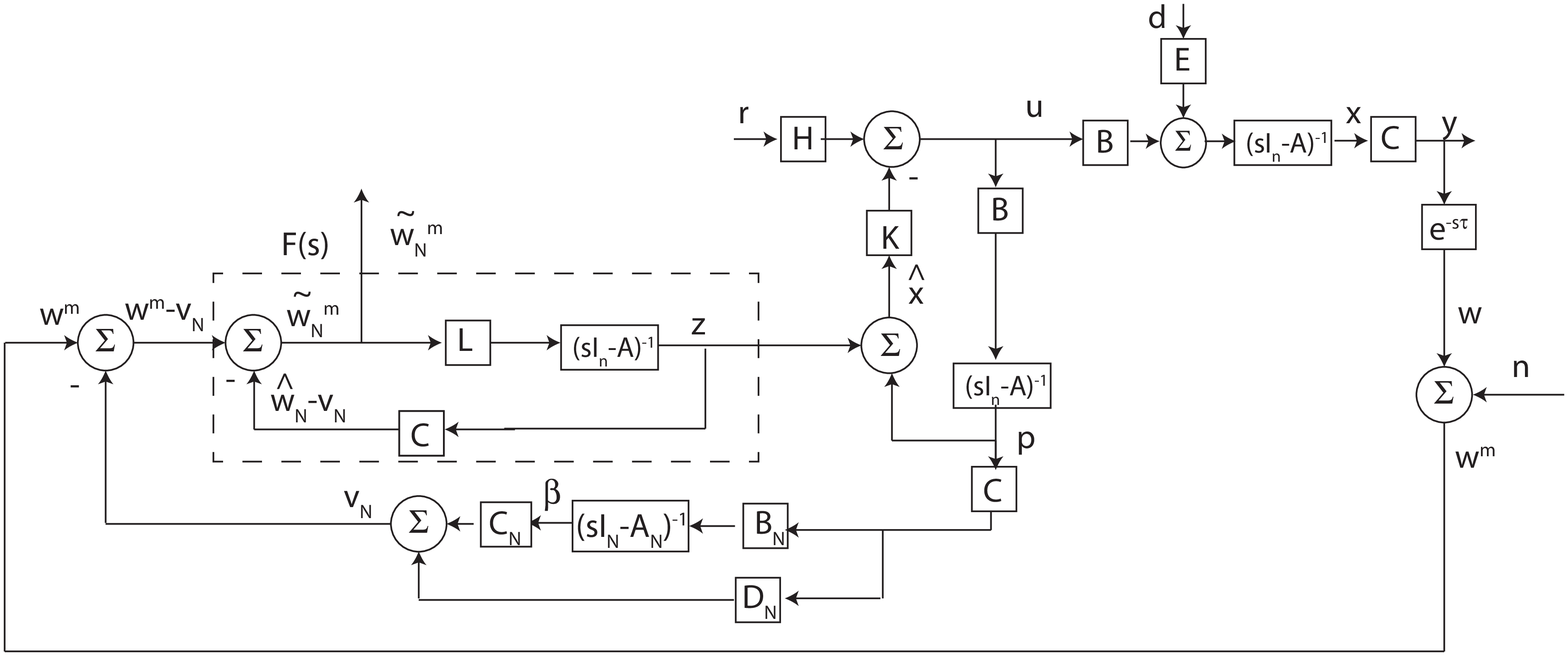,clip=,width=3.5in}}
\caption{Rearranged block diagram with an optimal estimator.}
\label{fig:F_optimal}
\end{figure} 
 
Unlike the original architecture in Fig.~\ref{fig:continuous_system}, the estimator gain $L$ may be chosen without regard for the delay, and has dimension determined only by that of the plant.  On the other hand, the compensator $C_{obs}(s)$ now has dynamical order $2n+N$, which is larger than that of the compensator in Fig.~\ref{fig:continuous_system}. Finally, note that the estimator gain $L$ in Fig.~\ref{fig:F_optimal} will generally differ from the gain $L_1$ in Fig.~\ref{fig:F}, and thus the former estimator cannot be obtained from the latter simply by setting $L_2=0$.

The next lemma, whose proof is in Appendix~\ref{sec:continuous_outputdelay}, shows that under mild assumptions the realization \eqref{eq:xCdot}-\eqref{eq:AC} will be minimal, with the exception that if   $\lambda = 0$ is an eigenvalue of $A$, then zero will be an  unobservable eigenvalue of $(A_C, C_C)$. 
 \begin{lemma}\label{prop:Cobsnewmin}Consider the compensator defined by \eqref{eq:xCdot}-\eqref{eq:AC}. 
 \begin{enumerate}[(i)]
 \item Assume that $A_N$ has no eigenvalues that are also zeros of $G(s)$, that $A_N$ and $A-BK$ have no eigenvalues that are zeros of $K(sI_n-A)^{-1}L$ and that the eigenvalues of $A_N$, $A$, and $A-BK$ are mutually disjoint. Then $(A_C, B_C)$ is a controllable pair.
 \item\label{lem:ptii} Assume that the eigenvalues of $A$, $A_N$, and $A-LC$ are mutually disjoint, and that  $(A-LC,K)$ is an observable pair.  Then if $A$ has no eigenvalues at the origin, the pair $(A_C, C_C)$ is  observable. If $A$ has an eigenvalue equal to zero, then $\lambda = 0$ is an  unobservable eigenvalue of $(A_C, C_C)$.
 \end{enumerate}
 \end{lemma}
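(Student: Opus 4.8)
The plan is to prove both parts with the Popov--Belevitch--Hautus (PBH) rank tests, exploiting the special sparsity of $B_C=\begin{bmatrix}L^T&0&0\end{bmatrix}^T$ and $C_C=\begin{bmatrix}K&K&0\end{bmatrix}$ in \eqref{eq:AC}. For controllability I would take a left eigenvector $\eta^T=\begin{bmatrix}\eta_1^T&\eta_2^T&\eta_3^T\end{bmatrix}$ of $A_C$ with eigenvalue $\lambda$ and $\eta^TB_C=\eta_1^TL=0$; substituting $\eta_1^TL=0$ into $\eta^TA_C=\lambda\eta^T$ decouples the three block rows into $\eta_1^T(A-\lambda I)=\eta_2^TBK$, $\eta_2^T(A-BK-\lambda I)=-\eta_3^TB_NC$, and $\eta_3^TA_N=\lambda\eta_3^T$. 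For observability I would take a right eigenvector $\xi=\begin{bmatrix}\xi_1^T&\xi_2^T&\xi_3^T\end{bmatrix}^T$ with $C_C\xi=K(\xi_1+\xi_2)=0$; substituting this into the $p$-block collapses it to $A\xi_2=\lambda\xi_2$, leaving $(A-LC-\lambda I)\xi_1=L(D_NC\xi_2+C_N\xi_3)$ and $(A_N-\lambda I)\xi_3=-B_NC\xi_2$. In both cases the argument is then a finite case split on which of three mutually disjoint spectra---$\sigma(A_N)$, $\sigma(A)$, $\sigma(A-BK)$ for~(i), and $\sigma(A_N)$, $\sigma(A)$, $\sigma(A-LC)$ for~(ii)---contains $\lambda$, solving for the suppressed components via the corresponding resolvents and reducing to a scalar non-vanishing condition furnished by one of the hypotheses.

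For part~(i): if $\eta_3\ne0$ then $\lambda\in\sigma(A_N)$, hence $\lambda\notin\sigma(A)\cup\sigma(A-BK)$, so the last two block rows give $\eta_2^T=(\eta_3^TB_N)C(\lambda I-A+BK)^{-1}$ and $\eta_1^T=-(\eta_2^TB)K(\lambda I-A)^{-1}$, and $\eta_1^TL=0$ becomes $(\eta_2^TB)\big[K(\lambda I-A)^{-1}L\big]=0$. Since eigenvalues of $A_N$ are not zeros of $K(sI_n-A)^{-1}L$, this forces $\eta_2^TB=0$, hence $\eta_1=0$ and $(\eta_3^TB_N)\big[C(\lambda I-A+BK)^{-1}B\big]=0$; using $C(\lambda I-A+BK)^{-1}B=G(\lambda)/\big(1+K(\lambda I-A)^{-1}B\big)$ (well defined since $\lambda\notin\sigma(A-BK)$) and the hypothesis that eigenvalues of $A_N$ are not zeros of $G$, this forces $\eta_3^TB_N=0$, contradicting controllability of $(A_N,B_N)$. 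If $\eta_3=0$ but $\eta_2\ne0$ then $\lambda\in\sigma(A-BK)\setminus\sigma(A)$ and the same manipulation gives $\eta_2^TB=0$, $\eta_1=0$, so $\lambda$ is uncontrollable for $(A-BK,B)$, hence for $(A,B)$---impossible. If $\eta_2=\eta_3=0$ then $\eta_1^T(A-\lambda I)=0$ with $\eta_1^TL=0$, i.e.\ $\lambda$ is uncontrollable for $(A,L)$; this case is excluded because the optimal gain $L=\Sigma C^TW^{-1}$ with $\Sigma\succ0$ yields a controllable pair $(A,L)$, a consequence of controllability of $(A,E)$ together with the Riccati equation~\eqref{eq:Ric_lower}. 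Hence $(A_C,B_C)$ is controllable.

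For part~(ii): from $A\xi_2=\lambda\xi_2$, either $\xi_2=0$ or $\lambda\in\sigma(A)$. If $\xi_2=0$, the $\beta$-block forces $\xi_3=0$ unless $\lambda\in\sigma(A_N)$, and the residual possibilities (an eigenvector of $A-LC$ with $K\xi_1=0$, or an eigenvector of $A_N$ surviving into the $z$-block) are killed by observability of $(A-LC,K)$, observability of $(A_N,C_N)$, and disjointness of the spectra, so $\xi=0$. The substantive case is $\lambda\in\sigma(A)$, $\xi_2\ne0$, where $C\xi_2\ne0$ (else $(A,C)$ is unobservable) and $\lambda\notin\sigma(A_N)\cup\sigma(A-LC)$. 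Solving the $\beta$- and $z$-blocks and using $D_N+C_N(\lambda I-A_N)^{-1}B_N=P_N(\lambda)$ from \eqref{eq:PN} gives $\xi_1=(C\xi_2)P_N(\lambda)(\lambda I-A+LC)^{-1}L$, while rearranging $A\xi_2=\lambda\xi_2$ as $(A-LC-\lambda I)\xi_2=-L(C\xi_2)$ gives $\xi_2=-(C\xi_2)(\lambda I-A+LC)^{-1}L$; hence $\xi_1=-P_N(\lambda)\xi_2$ and the constraint $K(\xi_1+\xi_2)=0$ reduces to the scalar identity $\big(1-P_N(\lambda)\big)K\xi_2=0$. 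Because $P_N$ is an all-pass Pad\'e approximation with $P_N(0)=1$, $1-P_N(\lambda)\ne0$ for every nonzero eigenvalue $\lambda$ of $A$ (certainly for $\mathrm{Re}\,\lambda\ne0$ by the all-pass property), so such $\lambda$ would require $K\xi_2=0$, i.e.\ $\lambda$ unobservable for $(A,K)$---excluded under the observability hypotheses; whereas at $\lambda=0\in\sigma(A)$ the identity holds for any eigenvector $\xi_2$ of $A$ in $\ker A$, and one verifies directly that $\xi=\begin{bmatrix}-\xi_2^T&\xi_2^T&-(A_N^{-1}B_NC\xi_2)^T\end{bmatrix}^T$ satisfies $A_C\xi=0$ and $C_C\xi=0$, so $\lambda=0$ is an unobservable eigenvalue of $(A_C,C_C)$.

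The principal obstacle is the bookkeeping of the case split: in each branch one must express the suppressed components through the correct resolvent, isolate the single scalar whose non-vanishing is guaranteed by a hypothesis ($K(\lambda I-A)^{-1}L$, $G(\lambda)$, $C_N\xi_3$, or the all-pass value $P_N(\lambda)$), and---crucially for the $\lambda=0$ exception---reassemble a vector that solves \emph{all three} block equations rather than just the subset used in the reduction. A secondary technical point, needed to close the last branch of part~(i) and the $\xi_2=0$ branch of part~(ii), is that the optimal estimator produces a controllable pair $(A,L)$ and that certain scalar transfer-function values do not vanish at eigenvalues of $A_N$; these should be obtained from the return-difference / Riccati structure already used in Proposition~\ref{prop:unctrb2}, together with the standing controllability and observability assumptions on $(A,B,C,E)$.
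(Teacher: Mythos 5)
Your proposal follows essentially the same route as the paper's proof: PBH rank tests on $(A_C,B_C)$ and $(A_C,C_C)$, decoupling of the block equations via $\eta_1^TL=0$ (respectively $K(\xi_1+\xi_2)=0$), a case split over the disjoint spectra, reduction to the non-vanishing of the scalars $K(\lambda I_n-A)^{-1}L$ and $G(\lambda)$ (through the identity $C(\lambda I_n-A+BK)^{-1}B=G(\lambda)/\bigl(1+K(\lambda I_n-A)^{-1}B\bigr)$) in part (i), and of $1-P_N(\lambda)$ in part (ii), ending with the explicit null vector at $\lambda=0$, which is exactly the eigenvector of Lemma~\ref{prop:AcCc_evect0}. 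In these branches your argument is correct (your two sign slips in solving for $\xi_1$ and $\xi_2$ cancel, so $\xi_1=-P_N(\lambda)\xi_2$ and the reduction to $(1-P_N(\lambda))K\xi_2=0$ are right).

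Where you go beyond the paper---the corner branches the paper passes over silently---your patches are not sound as stated. First, in part (i) the branch $\eta_2=\eta_3=0$ reduces to $\lambda$ being an uncontrollable eigenvalue of $(A,L)$, and you dismiss it by asserting that the optimal gain $L=\Sigma C^TW^{-1}$ with $\Sigma\succ0$ and $(A,E)$ controllable makes $(A,L)$ controllable; this is false in general, since $(A,L)$ has an uncontrollable eigenvalue exactly when $A$ and $A-LC$ share an eigenvalue, which for the Kalman gain occurs whenever $G_d(s)$ has a zero at $-\lambda$ mirroring a stable eigenvalue $\lambda$ of $A$ (the spectral factorization forces that shared closed-loop pole). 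Part (i) does not assume $\sigma(A)\cap\sigma(A-LC)=\emptyset$, so this branch needs an additional hypothesis, not a Riccati argument. Second, in part (ii) the branch $\xi_2=0$, $\lambda\in\sigma(A_N)$ actually requires $K(\lambda I_n-A+LC)^{-1}L\neq0$ at the eigenvalues of $A_N$ (the analogue of the extra condition the paper does impose in Lemma~\ref{prop:Cobs_minimal}(ii)); observability of $(A-LC,K)$ and $(A_N,C_N)$ together with disjointness do not yield it. Third, in the main branch of (ii) you exclude $K\xi_2=0$ by appealing to ``observability of $(A,K)$,'' which is not among the hypotheses; only $(A-LC,K)$ is assumed observable, and since $\lambda\in\sigma(A)$ is disjoint from $\sigma(A-LC)$ that hypothesis says nothing about eigenvectors of $A$. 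To be fair, the paper's own proof glosses over these same points (it verifies $x+v\neq0$ where $K(x+v)\neq0$ is what is needed), so your main-line argument coincides with the paper's; but the specific justifications you supply for the corner cases are incorrect or appeal to unstated assumptions and should be replaced by explicitly added hypotheses.
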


  The following result shows that, as in Proposition~\ref{cor:Cobszeros}, the eigenvalues of $A_N$ will appear as zeros of $C_{obs}(s)$. Furthermore, with one important exception, $C_{obs}(s)$ will also have zeros at the eigenvalues of $A$. 
\begin{proposition}\label{prop:Cobs_pz}Consider $C_{obs}(s)$ defined by \eqref{eq:xCdot}-\eqref{eq:AC}, and assume that the hypotheses of Lemma~\ref{prop:Cobsnewmin} hold.  
\begin{enumerate}[(i)]
\item\label{pt1} Let $\lambda$ be an eigenvalue of $A_N$. Then $\lambda$ is a transmission zero of $C_{obs}(s)$. 
\item\label{pt2} Assume that $\lambda$ is a \emph{nonzero} eigenvalue of $A$. Then $\lambda$ is a transmission zero of $C_{obs}(s)$.
\item\label{pt3} Assume that $\lambda = 0$ is an eigenvalue of $A$ with multiplicity $m\geq 1$. Then $\lambda$ is  a transmission zero of $C_{obs}(s)$ with multiplicity $m-1$.
\end{enumerate}
\end{proposition}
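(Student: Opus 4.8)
The plan is to proceed via the Rosenbrock system matrix, exactly as in the proof of Proposition~\ref{cor:Cobszeros}. Write
\[
RSM(\lambda)=\begin{bmatrix}\lambda I_{2n+N}-A_C & -B_C\\ C_C & 0\end{bmatrix}
\]
for the realization \eqref{eq:xCdot}--\eqref{eq:AC}. By Lemma~\ref{prop:Cobsnewmin} this realization is controllable and is observable at every $\lambda\neq 0$ (the only decoupling zero is the lone unobservable mode at the origin that occurs when $0$ is an eigenvalue of $A$), so for $\lambda\neq 0$ the values at which $RSM(\lambda)$ loses rank coincide exactly with the transmission zeros of $C_{obs}(s)$. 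Partitioning a null vector of $RSM(\lambda)$ as $\begin{bmatrix}z^T & p^T & \beta^T & u_0\end{bmatrix}^T$ with $z,p\in\mathbb R^n$, $\beta\in\mathbb R^N$, $u_0\in\mathbb R$, the four block rows read
\begin{align*}
(\lambda I_n-A+LC)z+LD_NCp+LC_N\beta &= Lu_0,\\
BKz+(\lambda I_n-A+BK)p &= 0,\\
-B_NCp+(\lambda I_N-A_N)\beta &= 0,\\
K(z+p) &= 0,
\end{align*}
and the observation that organizes everything is that substituting $Kz=-Kp$ (from the last row) into the second row collapses it to $(\lambda I_n-A)p=0$. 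Thus in any null vector $p$ must be an eigenvector of $A$ for $\lambda$ (or a generalized eigenvector when multiplicities appear), or $p=0$.

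For part~\ref{pt1}, I would take $\lambda$ an eigenvalue of $A_N$ --- which lies in the open left half plane and is simple --- with corresponding eigenvector $\beta$, and exhibit the nonzero null vector $\begin{bmatrix}0^T & 0^T & \beta^T & C_N\beta\end{bmatrix}^T$: the second and fourth rows vanish trivially, the third is $(\lambda I_N-A_N)\beta=0$, and the first reduces to $u_0=C_N\beta$. For part~\ref{pt2}, I would take $\lambda$ a nonzero eigenvalue of $A$ with eigenvector $\xi$, set $p=\xi$ and $\beta=(\lambda I_N-A_N)^{-1}B_NC\xi$ (well defined since $\lambda$ is not an eigenvalue of $A_N$ by hypothesis), and then $z=-\xi$, $u_0=(P_N(\lambda)-1)C\xi$ if $K\xi\neq 0$, or $z=0$, $u_0=P_N(\lambda)C\xi$ if $K\xi=0$. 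Checking the four rows uses only $(\lambda I_n-A)\xi=0$ together with $C_N(\lambda I_N-A_N)^{-1}B_N=P_N(\lambda)-D_N$ and the allpass identity $P_N(s)P_N(-s)=1$, just as in Proposition~\ref{prop:Ham}. In both cases the exhibited vector is nonzero, so $\lambda$ is a transmission zero, which is all that parts~\ref{pt1} and~\ref{pt2} assert.

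Part~\ref{pt3} is different because at $\lambda=0$ the realization is no longer minimal: by Lemma~\ref{prop:Cobsnewmin}\ref{lem:ptii} the origin is then an unobservable eigenvalue of $(A_C,C_C)$, hence a decoupling zero, so the Rosenbrock matrix overcounts by exactly that one mode. The cleanest way to pin down the multiplicity is to read it off the closed form \eqref{eq:Cobs_re}. Since $(A,B,C)$ is a minimal SISO realization, an eigenvalue of $A$ at $0$ of algebraic multiplicity $m$ lies in a single $m\times m$ Jordan block, so $G(s)=C(sI_n-A)^{-1}B$ has a pole of order exactly $m$ at $s=0$; and because $P_N$ is a Pad\'e approximation of $e^{-s\tau}$ we have $P_N(0)=1$, so $1-P_N(s)$ has a zero of order exactly $1$ at $s=0$. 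Hence $G(s)\bigl(1-P_N(s)\bigr)$ has a pole of order exactly $m-1$ at the origin; under the hypotheses of Lemma~\ref{prop:Cobsnewmin} the factor $K(sI_n-A+BK+LC)^{-1}L$ is finite and nonzero there and introduces no cancellation, so the denominator in \eqref{eq:Cobs_re} has a pole of order exactly $m-1$ at $s=0$, and therefore $C_{obs}(s)$ has a zero of order exactly $m-1$ there.

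I expect part~\ref{pt3} to be the main obstacle: one must argue carefully that exactly one of the $m$ zeros the non-minimal Rosenbrock matrix places at the origin is the spurious unobservable mode --- equivalently, that the order drops by precisely one --- and the fact that makes this work is $P_N(0)=1$, the controller-side counterpart of the allpass property $P_N(s)P_N(-s)=1$ that was decisive in Proposition~\ref{prop:Ham}. Parts~\ref{pt1} and~\ref{pt2}, by contrast, are routine once the backbone identity $(\lambda I_n-A)p=0$ and the Pad\'e relation $C_N(\lambda I_N\pm A_N)^{-1}B_N=D_N-P_N(\mp\lambda)$ are in hand.
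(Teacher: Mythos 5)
Your argument is correct in substance, but it is not the paper's route for parts (i)--(ii), and it leaves two small holes in part (iii). The paper proves all three parts directly from the closed form \eqref{eq:Cobs_re}: it introduces coprime polynomial factorizations $K(sI_n-A+BK+LC)^{-1}L=N_C/D_C$, $G=N_G/D_G$ with $D_G=s^m\bar{D}_G$, $P_N=N_P/D_P$, computes $D_P(s)-N_P(s)=s\Delta(s)$ with $\Delta(0)\neq 0$ explicitly from the Pad\'e pole--zero pattern, and rearranges $C_{obs}(s)$ into a ratio whose numerator carries $N_C\bar{D}_GD_P$ (hence zeros at the eigenvalues of $A_N$ and at the nonzero eigenvalues of $A$) and whose order count at the origin yields the multiplicity $m-1$. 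Your Rosenbrock null-vector constructions for (i) and (ii) are a legitimate alternative: they parallel the paper's own Proposition~\ref{cor:Cobszeros}, your exhibited vectors do satisfy the four block rows (note that the allpass identity $P_N(s)P_N(-s)=1$ you cite is never actually used), and combined with Lemma~\ref{prop:Cobsnewmin}, which confines any decoupling zero to the origin, they establish (i) and (ii). What the paper's factorization buys is that all three parts, including the delicate origin count, fall out of one formula; what your route buys is an explicit eigenvector picture consistent with the rest of the paper.

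For part (iii) your order count is essentially the paper's argument, but two steps need justification. First, $P_N(0)=1$ only gives a zero of $1-P_N(s)$ of order \emph{at least} one; ``exactly one'' requires $P_N'(0)\neq 0$, which the paper obtains by computing $D_P(s)-N_P(s)=2s\sum_i\prod_{j\neq i}z_j+\cdots$, i.e.\ $\Delta(0)\neq 0$ (equivalently, the Pad\'e approximant matches the derivative $-\tau\neq 0$ of $e^{-s\tau}$ at $s=0$). Second, your claim that $K(sI_n-A+BK+LC)^{-1}L$ is finite and nonzero at $s=0$ ``under the hypotheses of Lemma~\ref{prop:Cobsnewmin}'' is not implied by those hypotheses: they say nothing about eigenvalues of $A-BK-LC$ at the origin nor about a zero of $N_C$ there. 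The paper's polynomial factorization sidesteps finiteness altogether (for $m\geq 2$ the rearranged denominator reduces to $N_GN_C\Delta$ at the origin even if $D_C(0)=0$), and it secures $N_C(0)\neq 0$ by invoking the standing assumption that the compensator is stabilizing. Either import that device or state the corresponding nondegeneracy condition explicitly; both fixes are one line, and with them your proof is complete.
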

\begin{proof} Consider $C_{obs}(s)$ defined in \eqref{eq:Cobs_re},  
introduce coprime factorizations $K(sI_n-A+BK+LC)^{-1}L=N_C(s)/D_C(s)$, $G(s)=N_G(s)/D_G(s)$, and $P_N(s)=N_P(s)/D_P(s)$, and note that $D_G(s)$ may be factored as $D_G(s)=s^m\bar{D}_G(s)$, where $\bar{D}_G(0)\neq 0$. Further note that 
\begin{equation}\notag
\frac{N_P(s)}{D_P(s)} = \frac{\prod_{i=1}^N(z_i-s)}{\prod_{i=1}^N(z_i+s)},
\end{equation}
from which it follows that
\begin{equation}\notag
D_P(s)-N_P(s) = 2s\sum_{i=1}^N\prod_{j\neq i}z_j +  higher\; order\; terms \;in  \;   s.
\end{equation}
Hence we may factor $D_P(s)-N_P(s)=s\Delta(s)$, where $\Delta(0)\neq 0$.

Substituting these factorizations into \eqref{eq:Cobs_re} and rearranging results in
\begin{equation}\label{eq:Cobs_cp}
C_{obs}(s) = \frac{N_C(s)\bar{D}_G(s)D_P(s)}{\bar{D}_G(s)D_C(s)D_P(s)+N_G(s)N_C(s)\Delta(s)}.
\end{equation}
The assumption that $C_{obs}(s)$ is stabilizing implies that $N_C(0)\neq 0$, and the fact that Pad\'e approximations are stable implies that $D_P(0)\neq 0$. 
Hence if $m=1$ then $C_{obs}(0)\neq 0$; otherwise, $C_{obs}(s)$ will have a zero at the origin of order $m-1$.
\end{proof}

We now analyze stability of the feedback system in Fig.~\ref{fig:F_optimal}  with the time delay replaced by a Pad\'e approximation, so that the system has a state variable description
\begin{align}\label{eq:xfb}
\dot{x}_{fb} = A_{fb}x_{fb} + B_{fb}\begin{bmatrix}r\\d\\n\end{bmatrix},\quad
       \begin{bmatrix}u\\y\end{bmatrix} = C_{fb}x_{fb}+\begin{bmatrix}H\\0\end{bmatrix}r,
\end{align}
where $x_{fb} = \begin{bmatrix}x^T&q_N^T&z^T&p^T&\beta^T \end{bmatrix}^T$, and
\begin{align}
    A_{fb} &= \begin{bmatrix}A_{aug} & -B_{aug}C_C\\B_CC_{aug} & A_C\end{bmatrix}, \label{eq:Afb}  \\  B_{fb} &= \begin{bmatrix}BH&E&0_{n\times 1}\\0_{N\times 1}&0_{N\times 1}&0_{N\times 1}\\0_{n\times 1}&0_{n\times 1}&L\\BH&0_{n\times 1}&0_{n\times 1}\\0_{N\times 1}&0_{N\times 1}&0_{N\times 1}\end{bmatrix}, \label{eq:Bfb}  \\
  C_{fb} &= \begin{bmatrix}0_{1\times n} & 0_{1\times N} &  -K & -K & 0_{1\times N}\\C & 0_{1\times N} & 0_{1\times n} & 0_{1\times n} & 0_{1\times N}\end{bmatrix}.\label{eq:Cfb}
\end{align}
\begin{proposition}  The eigenvalues of $A_{fb}$ are the union of the eigenvalues of $A$, $A-BK$, $A-LC$, and $A_N$, with the latter having multiplicity two.
\end{proposition}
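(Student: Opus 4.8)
The plan is to produce an explicit similarity transformation that puts $A_{fb}$ into block lower triangular form and then read the eigenvalues off the diagonal blocks. Since the exogenous signals $r,d,n$ do not affect the spectrum, I would work throughout with the autonomous part of \eqref{eq:xfb}, i.e. with $A_{fb}$ in \eqref{eq:Afb} acting on $x_{fb}=\begin{bmatrix}x^T&q_N^T&z^T&p^T&\beta^T\end{bmatrix}^T$.

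The change of variables I would use keeps $z$, $p$, and $\beta$ and replaces the plant and Pad\'e states by the ``model errors''
\begin{equation*}
\epsilon_x := x - p, \qquad \epsilon_q := q_N - \beta.
\end{equation*}
The corresponding change-of-basis matrix is block unitriangular (identity blocks on the diagonal, a $-I_n$ block coupling $\epsilon_x$ to $p$ and a $-I_N$ block coupling $\epsilon_q$ to $\beta$), hence invertible, so the eigenvalues are preserved. I would then substitute the block formulas for $A_{aug}$, $B_{aug}$, $C_{aug}$, $A_C$, $B_C$, and $C_C$ into \eqref{eq:Afb} and compute the dynamics in the new coordinates. The arithmetic is short: from $\dot x = Ax - BKz - BKp$ and $\dot p = -BKz + (A-BK)p$ the $z$- and $p$-terms cancel, giving $\dot\epsilon_x = A\epsilon_x$; from $\dot q_N = B_NCx + A_Nq_N$ and $\dot\beta = B_NCp + A_N\beta$ one gets $\dot\epsilon_q = B_NC\epsilon_x + A_N\epsilon_q$; and because the measurement $w^m$ enters $\dot z$ as $LD_NCx + LC_Nq_N$ while $A_C$ contributes $-LD_NCp - LC_N\beta$, the two combine to $\dot z = (A-LC)z + LD_NC\epsilon_x + LC_N\epsilon_q$, with $\dot p$ and $\dot\beta$ unchanged.

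Collecting these, the transformed system matrix, in the ordering $(\epsilon_x,\epsilon_q,z,p,\beta)$, is
\begin{equation*}
\begin{bmatrix}
A & 0 & 0 & 0 & 0\\
B_NC & A_N & 0 & 0 & 0\\
LD_NC & LC_N & A-LC & 0 & 0\\
0 & 0 & -BK & A-BK & 0\\
0 & 0 & 0 & B_NC & A_N
\end{bmatrix},
\end{equation*}
which is block lower triangular with diagonal blocks $A$, $A_N$, $A-LC$, $A-BK$, and $A_N$. Reading off the spectrum gives the asserted union, with the two copies of $A_N$ accounting for its multiplicity two.

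I do not expect a real obstacle here --- once the right coordinates are identified the computation is routine. The only point that needs care is checking that every coupling that would spoil triangularity actually cancels: specifically the disappearance of $z$ and $p$ from $\dot\epsilon_x$, and of $x$ and $q_N$ (as opposed to $\epsilon_x$, $\epsilon_q$) from $\dot z$. These cancellations are exactly where the structure of the alternate compensator is used --- namely that $C_{obs}(s)$ carries an internal copy of the plant ($p$) driven by $u$ and an internal copy of the Pad\'e approximation ($\beta$), so that the ``innovation'' fed to the reduced estimator is precisely $x-p$ and $q_N-\beta$.
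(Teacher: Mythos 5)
Your proof is correct, and it is essentially the paper's own argument in different clothing: the paper performs the row/column operations "subtract row 4 from row 1, row 5 from row 2; add column 1 to column 4, column 2 to column 5" on $A_{fb}-\lambda I$, which is exactly the similarity transformation induced by your error coordinates $\epsilon_x = x-p$, $\epsilon_q = q_N-\beta$, and yields the same block lower triangular matrix with diagonal blocks $A$, $A_N$, $A-LC$, $A-BK$, $A_N$.
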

\begin{proof} Substituting \eqref{eq:aug_defs} and \eqref{eq:AC} into $A_{fb}$ yields a block $5\times 5$ matrix. Adding column 2 to column 5 and column 1 to column 4 and subtracting row 4 from row 1 and row 5 from row 2 on $A_{fb}-\lambda I_{3n+2N}$ yields a lower block triangular matrix with the same determinant $ \det (A_{fb}-\lambda I_{3n+2N}) = \det (A-\lambda I_n) \det(A-BK-\lambda I_n)\det(A-LC-\lambda I_n)\det (A_N-\lambda I_N)^2$.
\end{proof}
In the event that $A$ has an eigenvalue $\lambda = 0$ with multiplicity $m\geq 1$, then there exists a realization $(A_{fb}^*, B_{fb}^*, C_{fb}^*)$ of the system in Fig.~\ref{fig:F_optimal} that has one less state than that given by \eqref{eq:Afb}-\eqref{eq:Cfb}. Specifically,  the multiplicity of $\lambda = 0$ as an eigenvalue  of $A^*_{fb}$ will be equal to $m-1$; for details see Lemma~\ref{prop:AcCc_evect0} and Proposition \ref{prop:AcCc_minreal} in Appendix~\ref{sec:continuous_outputdelay}.


\section{Relation To The Smith Predictor}
\label{sec:Smith}

The Smith predictor 
\begin{equation}\notag
C_p(s) = \frac{C(s)}{1+C(s)G(s)(1-e^{-s\tau})},
\end{equation}
shown in the dashed box in Fig.~\ref{fig:Smith}, is a commonly proposed   architecture for compensating systems with a time delay. This architecture has several key features:
\begin{enumerate}[S-(a)]
\item The predictor $C_p(s)$ contains a model of both the time delay as well as the plant $G(s)$.
\item The controller $C(s)$ may be designed to stabilize the plant $G(s)$, ignoring   the time delay, and the resulting command response is given by $Y(s)=G(s)C(s)/(1+G(s)C(s))$.
\item The only input to the compensator is the error signal, $e=r-w$, processed in a One Degree of Freedom (1DOF) control architecture. 
\item Only an input-output model of the plant is assumed, as opposed to an internal state variable description.
\item The poles of $G(s)$ appear as zeros of $C_p(s)$, and thus the
  Smith predictor cannot be used to stabilize unstable systems. The
  one exception to this rule is if $G(s)$ has a single pole at
  $s=0$. In that case $\lim_{s\rightarrow 0}C_p(s)\neq 0$, and it
  follows that the Smith predictor can be used to stabilize a plant
  with a single integrator.
\end{enumerate}

\begin{figure}[htbp]
\centerline{\epsfig{file=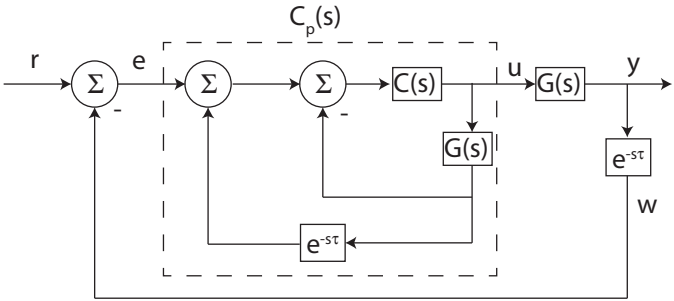,clip=,width=3.5in}}
\caption{Smith predictor architecture.}
\label{fig:Smith}
\end{figure} 

By way of contrast, the optimal control architecture shown in Fig.~\ref{fig:F_optimal} has the following corresponding properties:
\begin{enumerate}[O-(a)]
\item The compensator contains a copy of the plant $G(s)$ but only the approximation $P_N(s)$ to the delay.
\item The gains $K$ and $L$ are designed  ignoring the time delay. In the absence of noise and disturbances, the command response, given by \eqref{eq:YandDelta} with $L_1$ replaced by $L$ and $F(s)$ from \eqref{eq:Fsopt}, will depend on the error $\Delta(s)$ in approximating the time delay, but will be small if the system bandwidth  is confined to the frequency range for which $\Delta(j\omega)\approx 0$.
\item  The delayed output $w$ and the command input $r$ are processed separately using a Two Degree of Freedom (2DOF) control architecture.
\item An internal model of the plant is used as the basis for state feedback and optimal estimation.
\item The poles of $G(s)$ appear as zeros of $C_{obs}(s)$, and thus the architecture   in Fig.~\ref{fig:F_optimal} cannot be used to stabilize an unstable system. The one exception to this rule is if $G(s)$ has a single pole at $s=0$. In that case $C_{obs}(0)\neq 0$, and it follows that the architecture can be used to stabilize a plant with a single integrator.
\end{enumerate}

First consider properties (a) and (b). Taking the limit as $N\rightarrow \infty$ in allows us to replace the Pad\'e approximation $P_N(s)$ by an exact copy of the time delay. It follows that the approximation error $\Delta (s) = 0$ and the command response in \eqref{eq:YandDelta} no longer depends on the time delay. With regard to property (c), it is  possible to rearrange the block diagram in Fig.~\ref{fig:F_optimal} by subtracting the reference signal from the summing junction at the upper left, so that both systems have 1DOF control architectures. Note that, with no disturbance or noise present, and assuming perfect models of the plant and delay, the signal $e^m-v=-r$, and thus the system output $y$ will not depend on the delay despite the rearranged control architecture. The remaining difference between the two architectures, property (d), is needed in order to give more detailed modeling of the effect of the disturbance $d$ on the plant dynamics, to apply state estimate feedback, and to use optimal estimation to update these state estimates based on the output measurement.
\section{Discussion}
\label{sec:conclusions}

The cerebellum is critical for coordinating movement, balance, and
motor learning \cite{bastian2006learning}. How does the cerebellum
compensate for delays? It is proposed that the cerebellum could
function through the state estimation process to compensate the
delay~\cite{miall1996forward,paulin1989kalman,crevecoeur2019filtering}.
  
In this paper, we focus on the delay-masking problem in the state
estimation process, which must be understood to properly interpret
experimental data that investigates delay compensation. First we show
the delay is ``masked''---in the sense described by Carver et al.\ \cite{Carver09}---
during  delay compensation. That is, the feedback of estimated
plant states partially inverts the delay by introducing zeros at the
poles of the Pad\'e approximation (and zeros at the origin in the discrete-time
  case). This pole--zero cancellation was found to mask both the
sensory feedback delay and how it is regulated in the input--output
relationship from plant measurement $y$ to the computed motor commands
$u$. Second, we found that optimal estimation leads to a decomposition
of the Kalman filter; specifically, we found that the $N$ Pad\'e
states were uncontrollable while the remaining eigenvalues were
identical to those found by solving a lower order estimation problem
that ignores the time delay. The optimal reduction
identified above may give insights into how the brain processes the
sensory information for state estimation. Furthermore, the tradeoff filter in
estimation error, which modulates the balance between process
disturbance and measurement noise, could be obtained by ignoring the
time delay, as long as the Pad\'e approximation error is sufficiently
small.

We explore further how to exploit this
decomposition to produce structural simplifications in control.
Specifically, we found that an optimal observer-based compensator,
where only the plant (and not the delay) states are used for the
controller, can be subtly modified so that it acts surprisingly
similarly to a Smith predictor, sharing a number of a Smith
predictor's shortcomings. Specifically, both the Smith predictor and
our new ``Smith-like'' predictor fail to stabilize (most) unstable
plants.

It has been hypothesized that
the cerebellum might act like a Smith predictor for such behaviors as
reaching \cite{zimmetcerebellar2020,miall1993cerebellum}, but it
cannot be used to explain the cerebellum's role in postural balance
\cite{kiemelidentification2011}, because the latter involves
stabilization of right-half-plane poles. While there are many studies
working on generalizing the Smith predictor for cases such as unstable
plants\cite{sanz2018generalized}, the simple formulation we use in
this paper of an observer-based controller based on a Pad\'{e}
approximation can stabilize unstable plants and does not suffer the
drawbacks of a Smith predictor. Nevertheless, our calculations
suggest that Smith predictors serve as simplified (though limited)
near-optimal solutions to the delay compensation problem, and the
minimal modifications needed to transform a state-estimation-based
controller into a Smith predictor suggest that it may be infeasible to experimentally differentiate the two approaches, while at the same time suggesting
that either may be a useful model, depending on the goal of any given
study.

In our analysis we included the measurement noise $n$ after the time
delay; see Equation \eqref{eq:wm}. This is the equivalent condition as
adding noise before the time delay as follows:
\begin{equation}\label{eq:wm_beforedelay}
    w^m(t) = w(t)+n(t-\tau),
\end{equation} 
This works since the noise is serially uncorrelated, and Equations
\eqref{eq:wm} and \eqref{eq:wm_beforedelay} are statistically
equivalent.

Future work will include applying these results to multiple sensory
systems---such as haptic and visual feedback---with differing delays.
In addition, robustness is important for systems with time delays
\cite{zhong2006robust}, especially in the face of errors in delay
approximation. 
We are interested in exploring this and
other controller design issues as future work.


 \section*{ACKNOWLEDGMENT} 
 The authors would like to thank Pete Seiler for helpful discussions of the results in Section~\ref{sec:optimal}.  The work was motivated by
 discussions with Amanda Zimmet and Amy Bastian. This work is supported by the National Science Foundation (1825489 and 1825931), the National Institutes of Health under (R01-HD040289), and the Office of Naval Research (N00014-21-1-2431).

\begin{appendices}
\section{Supplementary Results for Continuous System with Output Delay}\label{sec:continuous_outputdelay}
\textbf{Proof of Lemma~\ref{prop:Cobs_minimal}:} 
 \begin{proof}
 $(i)$ We know that $\lambda$ is an uncontrollable eigenvalue of $(A_{obs}, L_{aug})$ if  and only if there exists a nonzero vector $\begin{bmatrix}x^T & w^T\end{bmatrix}$ such that 
 \begin{equation}\label{eq:PBH1}
   \begin{bmatrix}x^T & w^T\end{bmatrix}\begin{bmatrix}\lambda I_{n+N}-A_{obs}& L_{aug}\end{bmatrix}= 0.
 \end{equation}
 Substituting \eqref{eq:aug_defs} and the definitions of $K_{aug}$ and $L_{aug}$ into \eqref{eq:PBH1} yields 
 \begin{align}
 x^T(\lambda I_n-A+BK+L_1D_NC) + w^T(-B_NC+L_2D_NC) &=0\notag\\
 x^TL_1C_N+w^T(\lambda I_N-A_N+L_2C_N)&=0\notag\\
 x^TL_1+w^TL_2&=0,\label{eq:PBHc3}
 \end{align}
 and using \eqref{eq:PBHc3} implies
 \begin{align}
 x^T(\lambda I_n-A+BK) + w^T(-B_NC) &=0\label{eq:PBHc1a}\\
  w^T(\lambda I_N-A_N)&=0,\notag
 \end{align}
 from which it follows that either $w=0$ or that $\lambda$ must be an eigenvalue of $A_N$. In the former case \eqref{eq:PBHc1a} implies that $\lambda$ must be an eigenvalue of $A-BK$ that is uncontrollable from $L_1$, which is ruled out by assumption. In the latter case, it follows from \eqref{eq:PBHc1a} that $x^T = w^TB_NC(\lambda I_n-A+BK)^{-1}$, and from \eqref{eq:PBHc3} that 
 \begin{equation}\label{eq:Lcond}
    w^TB_NC(\lambda I_n-A+BK)^{-1}L_1 + w^TL_2 = 0.
 \end{equation}
 Since \eqref{eq:Lcond} must hold for each eigenvalue of $A_N$, the result follows.
 $(ii)$ We know that $\lambda$ is an unobservable eigenvalue of $(A_{obs}, K_{aug})$ if and only if there exists a nonzero vector $\begin{bmatrix}x^T & w^T\end{bmatrix}^T$ such that 
 \begin{equation}\label{eq:PBH2}
   \begin{bmatrix}\lambda I_{n+N}-A_{obs}\\ K_{aug}\end{bmatrix}\begin{bmatrix}x \\ w\end{bmatrix}= 0.
 \end{equation}
 Using \eqref{eq:aug_defs}   in \eqref{eq:PBH2} yields  
 \begin{align}
     (\lambda I_n-A+BK+L_1D_NC)x + L_1C_Nw &= 0\label{eq:PBHo1}\\
     (-B_NC+L_2D_NC)x  +  (\lambda I_N-A_N+L_2C_N)w &= 0\label{eq:PBHo2}\\
     Kx &= 0.\label{eq:PBHo3}
 \end{align}
 Note that if $x=0$ then \eqref{eq:PBHo1}-\eqref{eq:PBHo2} imply that  $\lambda$ must be an unobservable eigenvalue of $A_N$, which is ruled out by the assumption that \eqref{eq:qdot} is minimal. Using \eqref{eq:PBHo3} in \eqref{eq:PBHo1} yields
 \begin{equation}\label{eq:PBHo1a}
        (\lambda I_n-A+L_1D_NC)x + L_1C_Nw = 0
 \end{equation}
 and it follows from \eqref{eq:PBHo1a} and \eqref{eq:PBHo2} that $\lambda$ must be an eigenvalue of $A_{aug}-L_{aug}C_{aug}$. By assumption $\lambda$ is not an eigenvalue of $A-L_1D_NC$, and thus \eqref{eq:PBHo1a} and \eqref{eq:PBHo3} imply that 
 \begin{equation}\notag
     K(\lambda I_n-A+L_1D_NC)^{-1}L_1C_Nw= 0,
 \end{equation}
 which is ruled out by hypothesis, and the result follows.
 \end{proof}


\textbf{Proof of Lemma~\ref{prop:Cobsnewmin}:}
 \begin{proof}
  $(i)$ We know that $\lambda$ is an uncontrollable eigenvalue of $(A_C, B_C)$ if and only if there exists a nonzero vector $\begin{bmatrix}x^T & w^T & u^T\end{bmatrix}$ such that 
 \begin{equation}\label{eq:PBHc}
     \begin{bmatrix}x^T & w^T & u^T\end{bmatrix}\begin{bmatrix}\lambda I_{2n+N}-A_C & B_C\end{bmatrix} = 0.
 \end{equation}
 Substituting \eqref{eq:AC}  into \eqref{eq:PBHc} and rearranging yields $x^TL = 0$ and 
 \begin{align}
 x^T(\lambda I_n-A) + w^T BK &= 0\label{eq:c1r}\\
  w^T(\lambda I_n-A+BK)-u^TB_NC &= 0\label{eq:c2r}\\
  u^T(\lambda I_N-A_N) &= 0\label{eq:c3r},
 \end{align}
 and thus \eqref{eq:c3r} $\lambda$ must be an eigenvalue of $A_N$ or $u^T=0$. 
 
 In the latter case, it follows from \eqref{eq:c2r} that $\lambda$ must be an eigenvalue of $A-BK$ and from \eqref{eq:c1r} and $x^TL = 0$ that $w^TBK(\lambda I_n-A)^{-1}L=0$. However, our assumptions imply that $w^TB\neq 0$ (otherwise $\lambda$ would be an uncontrollable eigenvalue of $A$) and that $K(\lambda I_n-A)^{-1}L\neq 0$.
 
 In the former case, \eqref{eq:c1r} and \eqref{eq:c2r} imply that 
 \begin{align}
 x^T &= -w^TBK(\lambda I_n-A)^{-1}\label{eq:c1s}\\
 w^T &= u^TB_NC(\lambda I_n-A+BK)^{-1},\label{eq:c2s}
 \end{align}
 and substituting \eqref{eq:c2s} into \eqref{eq:c1s} and invoking $x^TL = 0$ yields
 \begin{equation}\label{eq:c5}
    u^TB_NC(\lambda I_n-A+BK)^{-1}BK(\lambda I_n -A)^{-1}L=0.
 \end{equation}
 The assumption that  $(A_N, B_N)$ is minimal implies that $u^TB_N\neq 0$, and $K(\lambda I_n -A)^{-1}L$ is also nonzero by assumption. The identity $C(\lambda I_n-A+BK)^{-1}B=G(s)/(1+K(sI_N-A)^{-1}B)$ together with the assumption that $G(\lambda)\neq 0$ imply that \eqref{eq:c5} cannot hold and thus that $(A_C, B_C)$ is controllable.

 $(ii)$   We know that $\lambda$ is an unobservable eigenvalue of $(A_C, C_C)$ if and only if there exists a nonzero vector 
 $\begin{bmatrix}x^T & v^T & w^T\end{bmatrix}^T$ such that
 \begin{equation}\label{eq:PBHo}
   \begin{bmatrix}\lambda I_{2n+N}  -A_C\\C_C\end{bmatrix}\begin{bmatrix}x \\ v \\ w\end{bmatrix} = 0.
  \end{equation}
  Substituting \eqref{eq:AC}  into \eqref{eq:PBHo} yields the four equations
  \begin{align}
  (\lambda I_n-A+LC)x + LD_NCv + LC_Nw &= 0\label{eq:o1}\\
  BKx + (\lambda I_n-A+BK)v &= 0\label{eq:o2}\\
  -B_NCv + (\lambda I_N-A_N)w &= 0\label{eq:o3}\\
  Kx+Kv &= 0.\label{eq:o4}
  \end{align}
  If $A-LC$ has an eigenvector $x$ satisfying $Kx=0$ then a solution to \eqref{eq:PBHo} is obtained by setting $v$ and $w$ equal to $0$;   this scenario is ruled out by the hypothesis that $(A-LC,K)$ is observable.  Hence, substituting \eqref{eq:o4} into \eqref{eq:o2} yields $(\lambda I_n-A)v=0$ and thus $\lambda$ must be an eigenvalue of $A$. It follows from \eqref{eq:o3} that  $w = (\lambda I_N-A_N)^{-1}B_NCv$,
  and substituting into \eqref{eq:o1} and rearranging yields
 $(\lambda I_n-A+LC)x+LP_N(\lambda)Cv = 0$.
  The assumption that $\lambda$ is not an eigenvalue of $A-LC$  implies that
 $x = -(\lambda I_n-A+LC)^{-1}LP_N(\lambda)Cv$, and 
 the fact that $v$ is an eigenvector of $A$ implies that
 $x+v = (\lambda I_n-A+LC)^{-1}(LCw-LP_N(\lambda)Cv)$,
 which is equal to zero if and only if $\lambda = 0$ so that $P_N(0)=1$, and the result follows.
 \end{proof}

 \begin{lemma}\label{prop:AcCc_evect0}Assume that $A$ has an eigenvalue $\lambda=0$ with eigenvector $v$. Then $(A_C, C_C)$ has an unobservable eigenvalue $\lambda=0$ with eigenvector $v_C = \begin{bmatrix}v^T&-v^T&(A_N^{-1}B_Nv)^T\end{bmatrix}^T$.
 \end{lemma}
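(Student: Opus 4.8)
The plan is a direct PBH-style verification: I would show that the stated vector $v_C$ is nonzero, that $A_C v_C = 0$, and that $C_C v_C = 0$; together these say precisely that $\lambda = 0$ is an unobservable eigenvalue of $(A_C, C_C)$ with eigenvector $v_C$. Nonzeroness is immediate, since the first block of $v_C$ is $v \neq 0$. For the rest I would work block by block with the partitioned forms in \eqref{eq:AC}, writing $\beta$ for the third block of $v_C$; in the SISO setting $Cv$ is a scalar, so $\beta = A_N^{-1}B_NCv \in \mathbb{R}^N$.

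First the two easy blocks of $A_C v_C$. The second block is $-BKv + (A-BK)(-v) = -Av$, which vanishes because $Av = 0$. The third block is $B_NC(-v) + A_N\beta = -B_NCv + A_N A_N^{-1}B_NCv = 0$ by the definition of $\beta$; this is exactly why $\beta$ is chosen to be $A_N^{-1}B_NCv$.

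The first block is the only one needing a genuine identity. Expanding and using $Av = 0$, it equals $(A-LC)v + LD_NCv - LC_N\beta = L(D_N - 1)Cv - LC_N\beta$. Here I would invoke the fact that the $N$th-order Pad\'e approximation of $e^{-s\tau}$ has unit DC gain, $P_N(0) = 1$ — the same fact already used in the proof of Lemma~\ref{prop:Cobsnewmin}(ii). Combined with \eqref{eq:PN} this gives $C_N A_N^{-1}B_N = D_N - P_N(0) = D_N - 1$, hence $C_N\beta = (D_N - 1)Cv$ and the first block is zero. This substitution is the only nonroutine step; everything else is bookkeeping of the block structure and signs.

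Finally $C_C v_C = Kv + K(-v) + 0 = 0$ directly from $C_C = \begin{bmatrix}K & K & 0_{1\times N}\end{bmatrix}$. Thus $A_C v_C = 0$ and $C_C v_C = 0$ with $v_C \neq 0$, so by the PBH test $\lambda = 0$ is an unobservable eigenvalue of $(A_C, C_C)$. I do not anticipate any real obstacle; the only places to be careful are the sign bookkeeping in the first block and correctly reading off $C_N A_N^{-1}B_N = D_N - 1$ from the realization \eqref{eq:PN} of the allpass Pad\'e approximation.
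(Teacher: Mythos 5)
Your proof is correct and essentially coincides with the paper's: the paper proves this lemma by pointing back to the proof of Lemma~\ref{prop:Cobsnewmin}(ii), which at $\lambda=0$ is exactly your PBH verification, hinging on the same identity $P_N(0)=1$, i.e.\ $C_NA_N^{-1}B_N=D_N-1$. Your reading of the third block as $A_N^{-1}B_NCv$ (supplying the $C$ omitted in the statement) matches the vector produced there, and your direct block-by-block check has the minor advantage of not relying on the additional hypotheses (disjoint spectra, observability of $(A-LC,K)$) assumed in that lemma.
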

 \begin{proof}The result follows immediately from the proof of Lemma~\ref{prop:Cobsnewmin}\eqref{lem:ptii}.
 \end{proof}
 
 Define   $\zeta = A_N^{-1}B_NCv$ and $\Theta = \left(I_N-(\zeta-e_N)e_N^T/e_N^T\zeta\right)$,
 and let $A_{N\times N-1}$, $C_{N\times N-1}$, and $I_{N-1\times N}$ denote the first $N-1$ columns of $A_N$, $C_N$, and the first $N-1$ rows of $I_N$, respectively.
 
 \begin{proposition}\label{prop:AcCc_minreal}
    Assume that $A$ has an eigenvalue $\lambda = 0$ with multiplicity $m\geq 1$.  Then a minimal realization of $C_{obs}(s)$ defined in \eqref{eq:xCdot} is given by
    \begin{align*}
    &A_C^* = \notag\\ &\begin{bmatrix}A-LC & -LD_NC & A_C^*(1,3)\\
                         -BK & A^*_C(2,2)&v\zeta^{-L}A_{N\times N-1}\\
                         0_{N-1\times n} &I_{N-1\times N}\Theta B_NC &I_{N-1\times N}\Theta A_{N\times N-1} \end{bmatrix},\\ 
                         &B_C^* = \begin{bmatrix}L\\0_{n+N-1\times 1}\end{bmatrix}, \quad
    C_C^*  = \begin{bmatrix}K & K & 0_{1,N-1}\end{bmatrix},
    \end{align*}
 where    $A_C^*(1,3) =-LC_{N\times N-1}-v\zeta^{-L}A_{N\times N-1}$ and $A^*_C(2,2) =A-BK+v\zeta^{-L}B_NC$. 
 
    A minimal realization of the feedback system \eqref{eq:xfb}-\eqref{eq:Cfb}  is given by $(A_{fb}^*,  B_{fb}^*,  C_{fb}^*)$, where
 \begin{equation}\notag
   A^*_{fb} = \begin{bmatrix}A_{aug} & -B_{aug}C_C^*\\B_C^*C_{aug} & A_C^*\end{bmatrix},  
   \end{equation}
 and  $B_{fb}^*$ and $C_{fb}^*$   are obtained by deleting the last row of  $B_{fb}$  and    the last column of  $C_{fb}$ .
 \end{proposition}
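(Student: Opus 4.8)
The plan is to realize $C_{obs}(s)$ minimally by stripping off the single unobservable mode exhibited in Lemma~\ref{prop:AcCc_evect0}, and then to push the identical reduction through the feedback interconnection.

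\textbf{Reduced compensator.} By Lemma~\ref{prop:AcCc_evect0} the vector $v_C=\begin{bmatrix}v^T&-v^T&\zeta^T\end{bmatrix}^T$, with $\zeta=A_N^{-1}B_NCv$, spans a one-dimensional $A_C$-invariant subspace with $A_Cv_C=0$ and $C_Cv_C=0$. I would build an explicit similarity transformation $M$ on $\mathbb{R}^{2n+N}$ whose last column is $v_C$ and whose remaining columns co-ordinatize the quotient: act on the $\beta$-block by $\Theta^{-1}$ (recall $\Theta\zeta=e_N$, so this carries the $\beta$-part of $v_C$ onto $e_N$), then post-compose with a unipotent factor that clears the $\pm v$ entries in the first two blocks, so $v_C$ becomes the last standard basis vector. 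Because $A_Cv_C=0$ and $C_Cv_C=0$, in these co-ordinates $(A_C,C_C)$ is in Kalman observability-decomposed form,
\begin{equation*}
M^{-1}A_CM=\begin{bmatrix}A_C^*&0\\ \ast&0\end{bmatrix},\qquad C_CM=\begin{bmatrix}C_C^*&0\end{bmatrix},
\end{equation*}
while $M^{-1}B_C=\begin{bmatrix}(B_C^*)^T&\ast\end{bmatrix}^T$. Since the last column of $M^{-1}A_CM$ vanishes, truncating the last co-ordinate yields a realization $(A_C^*,B_C^*,C_C^*)$ of $C_{obs}(s)$; carrying out the block multiplication (this is where $A_{N\times N-1}$, $C_{N\times N-1}$, $I_{N-1\times N}$, $\Theta$, and the left inverse $\zeta^{-L}=e_N^T/(e_N^T\zeta)$ appear) gives exactly the displayed matrices. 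The realization has $2n+N-1$ states, which by the coprime factorisation in the proof of Proposition~\ref{prop:Cobs_pz} equals the McMillan degree of $C_{obs}(s)$ under the standing disjointness hypotheses, so it is automatically minimal; alternatively one checks controllability and observability of $(A_C^*,B_C^*,C_C^*)$ directly by PBH tests in the spirit of Lemma~\ref{prop:Cobsnewmin}.

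\textbf{Reduced feedback system.} The key observation is that $\hat v_C=\begin{bmatrix}0_{(n+N)\times 1}^T&v_C^T\end{bmatrix}^T$ is an unobservable eigenvector of $(A_{fb},C_{fb})$ at $\lambda=0$: the coupling block contributes $-B_{aug}C_Cv_C=0$, so $A_{fb}\hat v_C=\begin{bmatrix}0^T&(A_Cv_C)^T\end{bmatrix}^T=0$, while $C_{fb}\hat v_C=0$ because the only nonzero blocks of $C_{fb}$ on the compensator states are the $\pm K$ acting on the $z,p$ blocks of $v_C$, which cancel. Applying the transformation $\mathrm{diag}(I_{n+N},M)$ therefore block-triangularizes $(A_{fb},C_{fb})$ with the single unobservable mode in the last co-ordinate; since this transformation is the identity on the plant/Pad\'e block, truncating that co-ordinate replaces $A_C$ by $A_C^*$ inside $A_{fb}$, deletes the last row of $B_{fb}$, and deletes the last column of $C_{fb}$, producing $(A_{fb}^*,B_{fb}^*,C_{fb}^*)$. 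Minimality follows because $A_{fb}$ has exactly the $3n+2N$ eigenvalues computed in the proposition preceding Section~\ref{sec:Smith} --- the union of the spectra of $A$, $A-BK$, $A-LC$, and two copies of $A_N$ --- and under the disjointness assumptions these yield no pole--zero cancellations in the closed-loop transfer function beyond the one removed, so the closed loop has McMillan degree $3n+2N-1$, matching the order of $(A_{fb}^*,B_{fb}^*,C_{fb}^*)$.

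\textbf{Main obstacle.} The bookkeeping in the first part --- pinning down $M$ and confirming the truncated blocks are \emph{literally} the matrices in the statement, in particular the $(1,3)$ and $(2,2)$ blocks of $A_C^*$ --- is routine but lengthy. The genuinely delicate step is the minimality claim for the feedback realization: one must rule out extra hidden modes created by the interconnection (most plausibly from the two copies of the $A_N$ spectrum in $A_{fb}$), which requires either a careful McMillan-degree count of the full three-input, two-output closed-loop transfer function or a PBH argument leaning on the disjointness hypotheses.
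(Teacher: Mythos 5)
Your construction matches the paper's proof: the paper defines a single similarity transformation $Q$ whose last column is the eigenvector $v_C$ from Lemma~\ref{prop:AcCc_evect0} (which leaves $B_C$ and $C_C$ unchanged), observes that $Q^{-1}A_CQ$ has a zero last column so that state can be deleted, and then performs the same replacement inside $A_{fb}$ and deletes the corresponding state --- exactly your $M$ and $\mathrm{diag}(I_{n+N},M)$ reduction. The only difference is cosmetic: the paper simply asserts minimality of the truncated realizations rather than carrying out the McMillan-degree/PBH verification you sketch, so your proposal is essentially the same argument, stated slightly more cautiously about that final step.
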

 \begin{proof}Define
 \begin{equation}\notag
 Q = \begin{bmatrix}I_n&0_n&0_{n\times N-1}&v\\
                         0_n&I_n&0_{n\times N-1}&-v\\
                         0_{N\times n}& 0_{N\times n}& I_{N\times N-1} & \zeta\end{bmatrix}, 
 \end{equation}
 and $x^\dagger_C = Q^{-1}x_C$, $A^\dagger_C = Q^{-1}A_CQ$, $B^\dagger_C = Q^{-1}B_C$, and $C^\dagger_C = C_CQ$.
 Then $B^\dagger_C = B_C$, $C^\dagger_C = C_C$, and 
 \begin{align}\notag
 &A^\dagger_C  =   &\begin{bmatrix}A-LC & -LD_NC & A^*_C(1,3)& 0_{n\times 1}\\
                         -BK & A^*_C(2,2)&v\zeta^{-L}A_{N\times N-1}&0_{n\times 1}\\
                         0_{N\times n} &\Theta B_NC &\Theta A_{N\times N-1} & 0_{N\times 1} \end{bmatrix}.
  \end{align}
 It is clear by inspection that $A^\dagger_C$ has an unobservable eigenvalue $\lambda=0$  corresponding to state $x_C^\dagger(2n+N)$. Deleting this state yields the lower order realization $(A_C^*, B_C^*, C_C^*)$. Replacing $(A_C, B_C,  C_C)$ in $A_{fb}$ with $(A_C^\dagger, B_C^\dagger, C_C^\dagger)$ also yields an unobservable eigenvalue, and deleting the corresponding state yields $(A_{fb}^*,  B_{fb}^*,  C_{fb}^*)$.
  \end{proof}
\renewcommand{\thetheorem}{B.\arabic{theorem}}
\section{Optimal Lower Order Property for Continuous System with Input Delay}
 \label{sec:continuous_inputdelay}
The lower order property of optimal state feedback design for the continuous-time system with input delay is dual to the output delay condition as above. Since the proof follows similarly, we state the results below without proof and point out their dual relationship to the output delay case. 

Consider the single input, single output linear system
 \begin{equation}
     \dot{x} = \bar{A}x + \bar{B}w(t)+ \bar{E}d, \qquad y = \bar{C}x, \qquad x\in \mathbb{R}^n, \label{eq:xdot_inputdelay}
 \end{equation}
Suppose that the control input is delayed by $\tau$ seconds, so that only the delayed input
 \begin{equation}\notag
    w(t) = u(t-\tau)
 \end{equation}
  is available to the controller. And denote the transfer function from $w(t)$ to $y$ by $G(s) = \bar{C}(sI_n-\bar{A})^{-1}\bar{B}$ and that from $d$ to $y$ by $G_d(s)=\bar{C}(sI_n-\bar{A})^{-1}\bar{E}$. Assume that $(\bar{A},\bar{C})$ is observable and that $(\bar{A},\bar{B})$ and $(\bar{A}, \bar{E})$ are controllable. 
 Assume the presence of additive measurement noise
 \begin{equation}\label{eq:ym}
    y^m = y+n.
 \end{equation}
 
 To obtain a finite dimensional system, we will approximate the time delay $e^{-s\tau}$ by passing $u(t)$ through an $N$'th order Pad\'e approximation with minimal realization
  \begin{equation} \label{eq:qdot_CI}
  \dot{q}_N = \bar{A}_N q_N + \bar{B}_N u, \;w_N = \bar{C}_N q_N+\bar{D}_N u, \;q_N\in \mathbb{R}^N       
 \end{equation}
 and transfer function $P_N(s)=\bar{C}_N (sI_N-\bar{A}_N )^{-1}\bar{B}_N +\bar{D}_N$. 

Denote the system obtained by augmenting the Pad\'e state equations \eqref{eq:qdot_CI} to those of the plant \eqref{eq:xdot_inputdelay} with noisy measurement \eqref{eq:ym}  by
 \begin{align}
     \dot{x}_{aug} &= \bar{A}_{aug}x_{aug} + \bar{B}_{aug}u +  \bar{E}_{aug}d,  \label{eq:xaugdot_inputdelay}\\
              y^m_N &= \bar{C}_{aug}x_{aug} +n,    \label{eq:yN1}
 \end{align}
 where $x_{aug} = \begin{bmatrix}x^T & q_N^T\end{bmatrix}^T$,
 \begin{equation}\label{eq:augu_defs}
 \bar{A}_{aug} = \begin{bmatrix}\bar{A} & \bar{B}\bar{C}_N \\ 0_{N\times n} & \bar{A}_N\end{bmatrix}, \; \bar{B}_{aug} = \begin{bmatrix}\bar{B}\bar{D}_N \\ \bar{B}_N\end{bmatrix},
 \end{equation}	
 $\bar{E}_{aug} = \begin{bmatrix}\bar{E}^T & 0_{N\times 1}\end{bmatrix}^T$ and $\bar{C}_{aug} = \begin{bmatrix}\bar{C} & 0\end{bmatrix}$.
It is straightforward to show that if $G(s)$ has no zeros at the eigenvalues of $A_N$, then $(\bar{A}_{aug}, \bar{C}_{aug})$ is observable. Similarly, if $P_N(s)$ has no zeros at the eigenvalues of $A$, then $(\bar{A}_{aug}, \bar{B}_{aug})$ is controllable.

 Let the control law be given by state estimate feedback 
 \begin{equation} \notag
     u = -K_{aug}x_{aug}+Hr
 \end{equation}
$K_{aug}$ is the optimal feedback gain given by LQR design as $K_{aug}=R^{-1}\bar{B}_{aug}^{T}\Sigma_{aug}$, and $\Sigma_{aug}$ is the solution to $(n+N)$ dimensional Riccati equation
 \begin{equation} \notag
     \bar{A}_{aug}^{T}\Sigma_{aug} + \Sigma_{aug}\bar{A}_{aug}  +Q_{aug} - \Sigma_{aug}\bar{B}_{aug}R^{-1}\bar{B}_{aug}^{T}\Sigma_{aug} = 0
 \end{equation}
 where $Q_{aug} = \begin{bmatrix}Q & 0 \\ 0 & 0\end{bmatrix}$ and $Q$ is the cost on plant states. We now characterize the $N+n$ eigenvalues of state feedback $\bar{A}_{aug}-\bar{B}_{aug}K_{aug}$.
 \begin{proposition}\label{prop:Continuous_opteval} Define   $K=R^{-1}\bar{B}^{T}\Sigma$, $\Sigma$ is the solution to $n$ dimensional Riccati equation
 \begin{align} \notag
     \bar{A}^{T}\Sigma + \Sigma \bar{A} + Q - \Sigma \bar{B}R^{-1}\bar{B}^{T}\Sigma = 0
 \end{align} 
 Assume that the eigenvalues of $\bar{A}_N$, $\bar{A}$,  and $\bar{A}-\bar{B}K$ are disjoint. Then $\bar{A}_{aug}-\bar{B}_{aug}K_{aug}$ has
 \begin{enumerate}[(i)]
 \item $N$ eigenvalues identical to those of $\bar{A}_N$, and
 \item $n$ eigenvalues identical to those  of $\bar{A}-\bar{B}K$. 
\end{enumerate}

 \end{proposition}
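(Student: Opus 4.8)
The plan is to mirror, by duality, the Hamiltonian eigenvector argument used for Proposition~\ref{prop:Ham}. The $n+N$ eigenvalues of $\bar{A}_{aug}-\bar{B}_{aug}K_{aug}$ are exactly the open left half plane eigenvalues of the control Hamiltonian $\mathcal{H}_{aug}=\begin{bmatrix}\bar{A}_{aug} & -\bar{B}_{aug}R^{-1}\bar{B}_{aug}^T\\ -Q_{aug} & -\bar{A}_{aug}^T\end{bmatrix}$, while the eigenvalues of $\bar{A}-\bar{B}K$ are the stable eigenvalues of the lower-order Hamiltonian $\mathcal{H}$ built from $(\bar{A},\bar{B}R^{-1}\bar{B}^T,Q)$. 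First I would substitute \eqref{eq:augu_defs} and $Q_{aug}=\operatorname{diag}(Q,0_N)$ to render $\mathcal{H}_{aug}$ a $4\times4$ block matrix acting on vectors $\nu=\begin{bmatrix}x^T&q^T&p^T&\pi^T\end{bmatrix}^T$ (plant state, Pad\'e state, plant costate, Pad\'e costate). The structural facts to exploit are the duals of those highlighted after Proposition~\ref{prop:Ham}: the control reaches the plant only through the delay, so the top-left $n\times n$ block of $\bar{B}_{aug}R^{-1}\bar{B}_{aug}^T$ equals $\bar{D}_N^2\,\bar{B}R^{-1}\bar{B}^T=\bar{B}R^{-1}\bar{B}^T$ because $\bar{D}_N=\pm1$; the cost ignores the Pad\'e states; and the delay and its Pad\'e model are allpass, $P_N(s)P_N(-s)=1$, which via $\bar{C}_N(sI_N-\bar{A}_N)^{-1}\bar{B}_N=P_N(s)-\bar{D}_N$ produces the cancellations below.

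I would then treat the two eigenvalue groups separately, as in the proof of Proposition~\ref{prop:Ham}. For the $n$ eigenvalues equal to those of $\bar{A}-\bar{B}K$: take a stable eigenvalue $\mu$ of $\mathcal{H}$ with eigenvector $\begin{bmatrix}x^T&p^T\end{bmatrix}^T$, extend it by defining the Pad\'e-costate and Pad\'e-state components $\pi$ and $q$ from the two Pad\'e rows of $\mathcal{H}_{aug}\nu=\mu\nu$, and verify that the residual of the plant-state row is $\bar{B}$ times a scalar multiple of $\bar{D}_N^2-P_N(\mu)P_N(-\mu)=0$; all inverses are well-defined because $\mu$, being an eigenvalue of $\bar{A}-\bar{B}K$, is disjoint from the spectra of $\bar{A}_N$ and $-\bar{A}_N$. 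For the $N$ eigenvalues equal to those of $\bar{A}_N$: let $\mu$ be an eigenvalue of $\bar{A}_N$ (necessarily in the open left half plane, since Pad\'e poles are stable) with eigenvector $q$; because $-\mu$ is then a nonminimum phase zero of $P_N$, the identity $P_N(-\mu)=0$ makes the Pad\'e-state row collapse to the constraint $\bar{D}_N\bar{B}^Tp+\bar{B}_N^T\pi=0$, which holds automatically once $p$ and $\pi$ are defined from the costate rows, while the plant-state row reduces to $(\mu I_n-\bar{A})x=\bar{B}\bar{C}_Nq$, solvable because the spectra of $\bar{A}_N$ and $\bar{A}$ are disjoint. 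These $n+N$ eigenvectors exhaust the stable spectrum of $\mathcal{H}_{aug}$, hence the spectrum of $\bar{A}_{aug}-\bar{B}_{aug}K_{aug}$.

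The step I expect to be the main obstacle is making the counting airtight when $\bar{A}-\bar{B}K$ has a repeated eigenvalue, which forces propagating chains of generalized eigenvectors through both constructions, much as in the proof of Lemma~\ref{lem:FGK}. I would sidestep this by recasting the argument in spectral-factorization form, dual to Proposition~\ref{prop:unctrb2}. Since the plant-state block of $(sI-\bar{A}_{aug})^{-1}\bar{B}_{aug}$ is $P_N(s)(sI_n-\bar{A})^{-1}\bar{B}$, the Pad\'e states are not penalized by $Q_{aug}$, and $P_N(s)P_N(-s)=1$, the LQR return-difference equality for the augmented system has the same left-hand side, $R+\bar{B}^T(-sI_n-\bar{A}^T)^{-1}Q(sI_n-\bar{A})^{-1}\bar{B}$, as that for the lower-order problem (compare \eqref{eq:returndiffaug}; the companion identity $\bar{C}_{aug}(sI-\bar{A}_{aug})^{-1}\bar{B}_{aug}=P_N(s)G(s)$ is also used). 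Equating the two factorizations and using $\det(sI-\bar{A}_{aug})=\det(sI_n-\bar{A})\det(sI_N-\bar{A}_N)$ gives $\chi_{cl}^{aug}(-s)\chi_{cl}^{aug}(s)=\chi_{cl}(-s)\chi_{cl}(s)\,\chi_N(-s)\chi_N(s)$, where $\chi_{cl}$, $\chi_{cl}^{aug}$, and $\chi_N$ are the characteristic polynomials of $\bar{A}-\bar{B}K$, $\bar{A}_{aug}-\bar{B}_{aug}K_{aug}$, and $\bar{A}_N$; since $\chi_{cl}^{aug}$ and $\chi_{cl}\chi_N$ are both monic of degree $n+N$ with all roots in the open left half plane, uniqueness of the Hurwitz spectral factor forces $\chi_{cl}^{aug}=\chi_{cl}\chi_N$, which is the claim with multiplicities included. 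Either route uses only $\bar{D}_N=\pm1$ and $P_N(s)P_N(-s)=1$, confirming that, as in the output-delay case, the optimal lower-order gain $K$ may be computed without reference to the delay.
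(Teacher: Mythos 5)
Your proposal is correct and follows essentially the same route as the paper: the paper's own proof of Proposition~\ref{prop:Continuous_opteval} is a short duality reduction (transposing $(\bar{A},\bar{B},\bar{C})$ and the Pad\'e realization turns $\bar{A}_{aug}-\bar{B}_{aug}K_{aug}$ into the transpose of the matrix $A_{aug}-L_{aug}C_{aug}$ of Proposition~\ref{prop:Ham}, with $Q_{aug}$ playing the role of $V_{aug}$), so the eigenvalue split rests on exactly the Hamiltonian eigenvector argument you carry out explicitly in dual variables, using the same key facts that $\bar{D}_N=\pm 1$, that $P_N(s)P_N(-s)=1$, and that the cost does not penalize the delay states. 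Your supplementary return-difference/spectral-factorization argument is not part of the paper's proof of this proposition but mirrors the paper's Proposition~\ref{prop:unctrb2} and \eqref{eq:returndiffaug}, and it usefully pins down the multiplicity count that the eigenvector construction (and the paper) leaves implicit.
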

\begin{proof}
  Note that $(A_N,B_N,C_N,D_N)$ is an $N$'th order Pad\'e approximation as in the main manuscript, then $(\bar{A}_N,\bar{B}_N,\bar{C}_N,\bar{D}_N) = (A_N^{T},C_N^{T},B_N^{T},D_N^{T})$ is also an $N$'th order Pad\'e approximation. Consider the plant dynamics $(\bar{A},\bar{B},\bar{C}) = (A^{T},C^{T},B^{T})$, which are dual to the output delay condition as above. Then the augmented system would also show duality as $(\bar{A}_{aug},\bar{B}_{aug},\bar{C}_{aug}) = (A_{aug}^{T},C_{aug}^{T},B_{aug}^{T})$. $Q_{aug}$ has a block structure similar as $V_{aug}$ in \eqref{eq:Ric_aug}. Essential to the proof are the facts that the cost only takes into account the plant states and does not penalize the delay states. So this partial pole placement property follows. 
 \end{proof}
 
 \section{Discrete System: Feedback of Estimated Plant States Partially Inverts Delay}
 \label{sec:discrete_feedback}
 \renewcommand{\thetheorem}{C.\arabic{theorem}}

Carver et al.~\cite{Carver09} showed that if the dynamics of a sensor were included in the model of the system used for estimator design, but that the state-estimate feedback only included nonzero gains for the plant states, then the transfer function of the resulting observer-based compensator would place zeros at sensor poles. While their proofs were performed for continuous-time systems, their results suggest that, in the discrete-time setting, a one-step delay in the sensor (which introduces a pole at the origin) will be canceled by a zero at the origin in the observer-based compensator (under similar assumptions as in the present paper). Their result does not trivially extend to the multi-step delay case (which leads to repeated poles at $z=0$). In this appendix, we generalize to the multi-step delay.
 
 Consider the single input, single output  linear system
 \begin{align}
 x_{k+1} &= Ax_k+Bu_k + Ed_k, \qquad x\in \mathbb{R}^n,   \label{eq:xkp1_dis}\\
    y_k &= Cx_k \notag
 \end{align}
 and define the plant transfer function $G(z)=C(zI_n-A)^{-1}B$. Assume there exists an $M$-step delay in the measurement of the output,
 \begin{equation}\label{eq:wk_dis}
    w_k = y_{k-M},
 \end{equation}
 as well as additive measurement noise
 \begin{equation}\notag
    w_k^m = w_k + n_k.
    \end{equation}
 For later reference, we adopt a state variable model of the delay, where
 \begin{align}
 q_{k+1}&=A_qq_k + B_qy_k, \qquad q\in \mathbb{R}^M,  \notag\\
            w_k &= C_qq_k, \notag
 \end{align}
$z^{-M} = C_q(zI_M-A_q)^{-1}B_q$, and
 \begin{align*}
    A_q & = \begin{bmatrix}0 & \cdots &0&0\\
    1&0&\cdots&0\\\vdots & \ddots & \ddots   &\vdots \\
                                       0&   \cdots &1 &0\end{bmatrix}_{M\times M}, \quad B_q = \begin{bmatrix}1\\0\\\vdots\\0\end{bmatrix}_{M\times 1}\\
    C_q & = \begin{bmatrix}0&\cdots&0&1\end{bmatrix}_{1\times M}.
 \end{align*}

 The control law is given by state estimate feedback,
 \begin{equation}\label{eq:uk_dis}
    u_k = -K\hat{x}_{k|k-1} + Hr_k,
 \end{equation}
 where the state estimates must be obtained from the delayed measurement of the output $y_k$. Hence the observer must estimate both the plant states and the delay states:
 \begin{equation}\label{eq:state_eqns_dis}
   \begin{bmatrix}\hat{x}_{k+1|k}\\\hat{q}_{k+1|k} \end{bmatrix} = \begin{bmatrix}A & 0\\B_qC & A_q\end{bmatrix}  \begin{bmatrix}\hat{x}_{k|k-1}\\\hat{q}_{k|k-1} \end{bmatrix} +\begin{bmatrix}B\\0\end{bmatrix}u_k  +\begin{bmatrix}L_1\\L_2\end{bmatrix}(w_k^m-\hat{w}_{k|k-1}).
 \end{equation}
 Substituting the control law \eqref{eq:uk_dis} into \eqref{eq:state_eqns_dis} yields
 \begin{align}\label{eq:state_eqns1_dis}
  \begin{bmatrix}\hat{x}_{k+1|k}\\\hat{q}_{k+1|k} \end{bmatrix} &= \begin{bmatrix}A-BK & -L_1C_q\\B_qC &A_q- L_2C_q\end{bmatrix}  \begin{bmatrix}\hat{x}_{k|k-1}\\\hat{q}_{k|k-1} \end{bmatrix} \notag\\
  &+ \begin{bmatrix}L_1\\L_2\end{bmatrix}w_k^m + \begin{bmatrix}BH\\0\end{bmatrix}r_k.
 \end{align}
 Denote the transfer function of the   observer based compensator mapping  $w_k^m$ to $-u_k$ by $C_{obs}(z)$. It follows from \eqref{eq:state_eqns1_dis} that $C_{obs}(z)$ has the state variable description 
 \begin{align}
  \begin{bmatrix}\hat{x}_{k+1|k}\\\hat{q}_{k+1|k} \end{bmatrix} &= \begin{bmatrix}A-BK & -L_1C_q\\B_qC &A_q- L_2C_q\end{bmatrix}  \begin{bmatrix}\hat{x}_{k|k-1}\\\hat{q}_{k|k-1} \end{bmatrix} + \begin{bmatrix}L_1\\L_2\end{bmatrix}w_k^m,\label{eq:Cobs_dis}\\
            - u_k &= \begin{bmatrix}K&0\end{bmatrix}\begin{bmatrix}\hat{x}_{k|k-1}\\\hat{q}_{k|k-1} \end{bmatrix}.\label{eq:ukbig_dis}
 \end{align}
 The Rosenbrock System Matrix associated with \eqref{eq:Cobs_dis}-\eqref{eq:ukbig_dis} is given by
 \begin{equation}\label{eq:RSM_dis}
 RSM(z) = \begin{bmatrix}zI_n-A+BK&L_1C_q&-L_1\\-B_qC&zI_M-A_q+L_2C_q&-L_2\\K&0&0\end{bmatrix},
 \end{equation}
 Let $0_{n\times 1}$ denote an $n$ dimensional column vector of zeros, and let $e_1$ denote the first standard basis vector. Then it may be verified from \eqref{eq:RSM_dis} and the structure of $A_q$ and $C_q$ that $RSM(0)$ has a nontrivial nullspace spanned by the vector   
 \begin{equation*}
 \begin{bmatrix}0_{n\times1}  \\ e_1 \\ 1
 \end{bmatrix},  
 \end{equation*}
 and thus that $C_{obs}(z)$ has at least one zero  at $z=0$. 
Indeed, the transmission zeros of $C_{obs}(z)$ include zeros at $z=0$ with multiplicity $M$. To do so, assume $M>1$ and use the Rosenbrock matrix and induction to show that $C_{obs}(z)/z,\ldots,C_{obs}(z)/z^{M-1}$ each have zeros at $z=0$, but that $C_{obs}(z)/z^M$ has no such zero. 
     A block diagram of the feedback system with  compensator $C_{obs}(z)$ is in Fig.~\ref{fig:system}; the dashed box in this diagram contains $-C_{obs}(z)$.       The additional zeros at $z=0$ in $C_{obs}(z)$ will lower its relative degree and thus reduce the delay in its impulse response, partially compensating for the effect of the time delay in the feedback path.

 \begin{figure}[htbp]
 \centerline{\epsfig{file=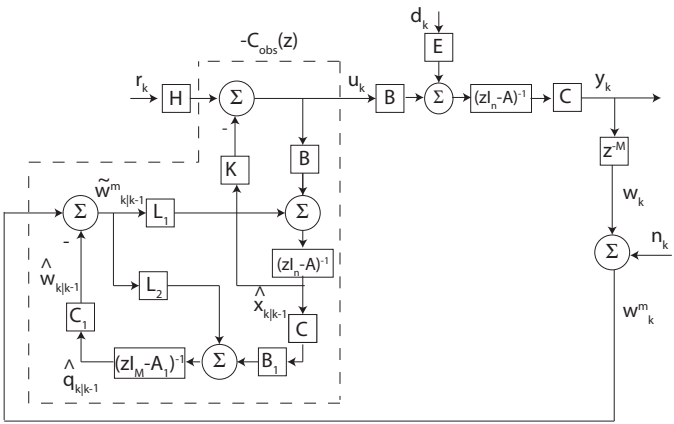,clip=,width=3.5in}}
 \caption{Discrete-time feedback system with $M$-step delay.}
 \label{fig:system}
 \end{figure}
 
Define the \emph{measured} estimation error for the delayed output by
 \begin{equation}\label{eq:wkmtilde_dis}
  \tilde{w}_{k}^m = w_k^m - \hat{w}_{k}.
 \end{equation} and the \emph{actual} estimation error for the delayed output by
 \begin{equation}\label{eq:wktilde_dis}
      \tilde{w}_k = w_k-\hat{w}_k.
 \end{equation}


\begin{corollary}\label{cor:F_dis}
    Define the \emph{tradeoff filter}
  \begin{equation} \label{eq:F_dis} F(z) =
    \left(1+C_{aug }\left(zI_{n+N}-A_{aug }\right)^{-1}L_{aug }\right)^{-1}.
  \end{equation} The $z$-transform of the estimation error \eqref{eq:wktilde_dis}  satisfies
    \begin{equation}\label{eq:F_tradeoff_dis}
   \tilde{W}(z)          = F(z) z^{-M}C(zI_n-A)^{-1}ED(z)  + \left(F(z)-1\right)N(z).
 \end{equation}
 \end{corollary}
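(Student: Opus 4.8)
The plan is to transcribe the proof of Proposition~\ref{prop:F} to the discrete-time setting, with one structural simplification: because the $M$-step delay $z^{-M}$ is realized \emph{exactly} by the triple $(A_q, B_q, C_q)$, the Pad\'e-mismatch term $\Delta$ appearing in the continuous-time identity \eqref{eq:Fs_tradeoff} vanishes identically here. Throughout I write $A_{aug}$, $B_{aug}$, $E_{aug}$, $C_{aug}$ for the augmented plant-plus-delay data implicit in \eqref{eq:state_eqns_dis} (block-triangular $A_{aug}$ with $A$ and $A_q$ on the diagonal and $B_qC$ below, $B_{aug} = [B^T\ 0]^T$, $E_{aug} = [E^T\ 0]^T$, $C_{aug} = [0\ C_q]$, all of dimension $n+M$), and I abbreviate $L_{est}^{aug}(z) = C_{aug}(zI - A_{aug})^{-1}L_{aug}$, so that $F(z) = (1 + L_{est}^{aug}(z))^{-1}$ as in \eqref{eq:F_dis}. (The subscript $n+N$ in \eqref{eq:F_dis} is the dimension $n+M$ of the augmented state used throughout this appendix.)

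First I would record the $z$-transform of the true delayed output: from \eqref{eq:xkp1_dis}, $Y(z) = G(z)U(z) + C(zI_n - A)^{-1}ED(z)$, and from \eqref{eq:wk_dis}, $W(z) = z^{-M}Y(z)$, whence $W^m(z) = W(z) + N(z)$. Next I would $z$-transform the observer \eqref{eq:state_eqns_dis}, using $\hat{w}_k = C_{aug}\hat{x}_{aug,k}$ and the measured error \eqref{eq:wkmtilde_dis}, to obtain $\hat{W}(z) = C_{aug}(zI - A_{aug})^{-1}B_{aug}\,U(z) + L_{est}^{aug}(z)\,\tilde{W}^m(z)$. The identity that does the real work is $C_{aug}(zI - A_{aug})^{-1}B_{aug} = z^{-M}G(z)$, which follows from the block-triangular structure of $A_{aug}$ together with $C_q(zI_M - A_q)^{-1}B_q = z^{-M}$; the same computation gives $C_{aug}(zI - A_{aug})^{-1}E_{aug} = z^{-M}C(zI_n - A)^{-1}E$.

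Then I would substitute and collect terms. From $\tilde{W}^m(z) = W^m(z) - \hat{W}(z)$ and $W(z) - z^{-M}G(z)U(z) = z^{-M}C(zI_n - A)^{-1}ED(z)$, the control-dependent contributions cancel, leaving $(1 + L_{est}^{aug}(z))\tilde{W}^m(z) = z^{-M}C(zI_n - A)^{-1}ED(z) + N(z)$, i.e., $\tilde{W}^m(z) = F(z)\bigl(z^{-M}C(zI_n - A)^{-1}ED(z) + N(z)\bigr)$. Finally, since $\tilde{w}^m_k = \tilde{w}_k + n_k$ by \eqref{eq:wkmtilde_dis}--\eqref{eq:wktilde_dis}, subtracting $N(z)$ from both sides yields \eqref{eq:F_tradeoff_dis}.

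The argument is linear algebra over the field of rational functions in $z$ and poses no real difficulty; the only step needing a moment of care is checking the exact-delay identities $C_q(zI_M - A_q)^{-1}B_q = z^{-M}$ and $C_{aug}(zI - A_{aug})^{-1}B_{aug} = z^{-M}G(z)$ from the companion-form structure of $A_q$, after which the whole computation is formally the same as in the continuous case but with $\Delta \equiv 0$, so that no separate Pad\'e-error term survives in \eqref{eq:F_tradeoff_dis}.
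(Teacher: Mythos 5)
Your proposal is correct and follows essentially the same route as the paper's own proof: write $W(z)=z^{-M}G(z)U(z)+z^{-M}C(zI_n-A)^{-1}ED(z)$, transform the observer so that $\hat{W}(z)=z^{-M}G(z)U(z)+\bigl(z^{-M}C(zI_n-A)^{-1}L_1+C_q(zI_M-A_q)^{-1}L_2\bigr)\tilde{W}^m(z)$, cancel the control-dependent terms to get $\tilde{W}^m(z)=F(z)\bigl(z^{-M}C(zI_n-A)^{-1}ED(z)+N(z)\bigr)$, and subtract $N(z)$. The only cosmetic difference is that you phrase the computation with the augmented matrices and the identity $C_{aug}(zI-A_{aug})^{-1}B_{aug}=z^{-M}G(z)$, whereas the paper carries the plant and delay blocks separately; you also correctly note that the exact delay realization makes the Pad\'e-mismatch term of Proposition~\ref{prop:F} vanish and that the subscript $n+N$ in \eqref{eq:F_dis} should read $n+M$.
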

 \begin{proof}
First, \eqref{eq:xkp1_dis}-\eqref{eq:wk_dis} imply that
 \begin{equation}\label{eq:W(z)_dis}
 W(z)=z^{-M}G(z)U(z) + z^{-M}C(zI_n-A)^{-1}ED(z).
 \end{equation}
 Taking $z$-transforms and applying \eqref{eq:uk_dis}  yields
 \begin{equation}\label{eq:Xhat_z_dis}
      \begin{aligned}
  \hat{X}(z) &= (zI_n-A)^{-1}BU(z)+(zI_n-A)^{-1}L_1\tilde{W}^m(z)\\
                 &= \left(I+(zI_n-A)^{-1}BK\right)^{-1}(zI_n-A)^{-1}\\
                 &\qquad \times {\left(L_1\tilde{W}^m(z)+BHR(z)\right)}.
 \end{aligned}
  \end{equation}
 Next, it follows from \eqref{eq:Cobs_dis} that 
 \begin{align}\notag
     \hat{q}_{k+1|k} &= B_qC\hat{x}_{k|k-1}+A_q\hat{q}_{k|k-1}+L_2\tilde{w}_{k|k-1}^m,\\
                  \hat{w}_{k+1|k} &= C_q\hat{q}_{k+1|k},\notag
 \end{align}
 which implies
 \begin{equation}\label{eq:What_z_dis}
    \hat{W}(z) = C_q\left(zI_M-A_q\right)^{-1}B_qC\hat{X}(z)+C_q\left(zI_M-A_q\right)^{-1}L_2\tilde{W}^m(z).
 \end{equation}
 Together, \eqref{eq:wkmtilde_dis} and \eqref{eq:What_z_dis} imply
 \begin{equation}\notag
    \tilde{W}^m(z)=W^m(z)-C_q\left(zI_M-A_q\right)^{-1}\left(B_qC\hat{X}(z)+L_2\tilde{W}^m(z)\right).
 \end{equation}
 Substituting \eqref{eq:Xhat_z_dis} yields
 \begin{align}
 \tilde{W}^m(z)&=\left(1+z^{-M}C(zI_n-A)^{-1}L_1  +C_q\left(zI_M-A_q\right)^{-1}L_2\right)^{-1}\notag\\
 &\qquad \times \left(W^m(z)-z^{-M}G(z)U(z)\right)\notag\\
      &= F(z)\left(W^m(z)-z^{-M}G(z)U(z)\right),     \label{eq:Wmtilde_dis}
 \end{align}
 Next, noting that $\tilde{w}_k^m = \tilde{w}+n_k$, it follows from \eqref{eq:Wmtilde_dis} that
 \begin{equation}\label{eq:Wtilde_dis}
   \tilde{W}(z)  = F(z) \left(W(z) +N(z)-z^{-M}G(z)U(z)\right) -N(z).
   \end{equation}
   Substituting \eqref{eq:W(z)_dis} into \eqref{eq:Wtilde_dis} yields \eqref{eq:F_tradeoff_dis}.
 \end{proof}
 It follows from Corollary~\ref{cor:F_dis} that $F(z)$ describes the tradeoff between the response of the estimation error to the plant disturbance $D(z)$ and the measurement noise $N(z)$. Using large estimator gains $L_1$ and/or $L_2$ will force $F(z)\approx 0$ and the disturbance response will be small at the expense of the noise being passed directly to the estimation error. Using small estimator gains has the opposite effect.
 
 Let us now calculate the response of the system output $y_k$.  The definition of $u_k$ and \eqref{eq:Xhat_z_dis} yield 
 \begin{align}\label{eq:U(z)_dis}
    U(z) &= -K(zI_n-A+BK)^{-1}L_1\tilde{W}^m(z) \notag \\
    & + \left(1+K(sI_n-A)^{-1}B\right)^{-1}HR(z).
 \end{align}
 It follows from \eqref{eq:xkp1_dis} and \eqref{eq:U(z)_dis} that
 \begin{align}
    Y(z)&= C(zI_n-A+BK)^{-1}BHR(z)+ C(zI_n-A)^{-1}ED(z)\notag\\
    &- C(zI_n-A)^{-1}BK(zI_n-A+BK)^{-1}L_1\tilde{W}^m(z),\notag
 \end{align}
 and from \eqref{eq:Wtilde_dis} that 
 \begin{equation}\notag
     \tilde{W}^m(z) =    F(z) z^{-M}C(zI_n-A)^{-1}ED(z)  + F(z)N(z).
 \end{equation}
Hence in the absence of disturbances and measurement noise the response of $y_k$ to a command $r_k$ is the same with the observer as with state feedback, \emph{even with the $M$-step delay in the feedback loop}.  
 \renewcommand{\thetheorem}{D.\arabic{theorem}}

 \section{Discrete System: Decomposition of Optimal Estimator with Delay}
  \label{sec:discrete_optimal}

We now characterize the closed loop eigenvalues for a discrete-time estimator with the structure given in Appendix~\ref{sec:discrete_feedback} when the estimator is optimal, and present counterparts to the results for continuous-time in Section~\ref{sec:optimal}. After doing so we will describe connections to the work of \cite{mirkin2021dead}, who study discrete-time systems with a delay at the plant input.

 To formulate the optimal estimation problem, we consider the augmented system 
 \begin{align}\label{eq:state_aug_eqns_dis}
 \hat{x}_{aug(k+1)} &= A_{aug}\hat{x}_{aug(k)} + B_{aug}u_k + L_{aug}(w_k^m-\hat{w}_{k|k-1})\\
 \hat{w}_{k|k-1} &= C_{aug}x_{aug(k)},\notag
 \end{align}
 where $\hat{x}_{aug(k)} = \begin{bmatrix}\hat{x}_{k|k-1}\\\hat{q}_{k|k-1} \end{bmatrix}, A_{aug} = \begin{bmatrix}A & 0\\B_qC & A_q\end{bmatrix},   B_{aug} = \begin{bmatrix}B\\0\end{bmatrix}, L_{aug} = \begin{bmatrix}L_1\\L_2\end{bmatrix}, C_{aug} = \begin{bmatrix}0 & C_q\end{bmatrix}$. 
 Suppose that $d_k$ and $n_k$ are zero mean Gaussian white noise processes with covariances $V\geq 0$ and $W>0$, respectively, and  assume that $(A_{aug},C_{aug})$ is observable and  $(A_{aug},E_{aug})$ is controllable. Then the optimal estimator gain satisfies
 \begin{equation}\label{eq:Laugopt_dis}
     L_{aug} = A_{aug}\Sigma_{aug} C_{aug}^T(C_{aug}\Sigma_{aug} C_{aug}^T+W)^{-1},
 \end{equation}
 where $\Sigma_{aug}$  is the unique positive semidefinite solution to the algebraic Riccati equation
 \begin{align}\label{eq:Ric_aug_dis}
&\Sigma_{aug} = A_{aug}\Sigma_{aug} A_{aug}^T +V_{aug} \\
&- A_{aug}\Sigma_{aug} C_{aug}^T(C_{aug}\Sigma_{aug} C_{aug}^T+W)^{-1}C_{aug} \Sigma_{aug} A_{aug}^T\notag
 \end{align}
 with $V_{aug} = E_{aug}VE_{aug}^T$. 
 
The eigenvalues of the optimal estimator are   those of the closed loop matrix $A_{aug}^{CL}=A_{aug}-L_{aug}C_{aug}$, and  will be characterized in Propositions~\ref{prop:eval_aug_lower}-\ref{zeroevals_unctrb} below, after the following lemma.
 Define  $L = A\Sigma C^T(C\Sigma C^T+W)^{-1}$, where $\Sigma$ is the unique positive semidefinite solution to the Riccati equation
  \begin{equation}\notag
      \Sigma = A\Sigma A^T - A\Sigma C^T(C\Sigma C^T+W)^{-1}C \Sigma A^T +EVE^T.
  \end{equation}
  \begin{lemma}\label{A_LC_lemma}
Suppose that $A$ has no eigenvalues equal to zero. Then   $A^{CL}=A-LC$ also has no eigenvalues equal to zero. If $A$ has an eigenvalue equal to zero with multiplicity $m\geq 1$, then $A^{CL}$ also has an eigenvalue equal to zero but with multiplicity one. Furthermore, this zero eigenvalue is an uncontrollable eigenvalue of $(A, L)$.
   \end{lemma}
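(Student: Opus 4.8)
The plan is to reduce the entire statement to the factorization $A-LC=A(I-K_fC)$, where $K_f:=\Sigma C^{T}(C\Sigma C^{T}+W)^{-1}$ is the filter gain (note $C\Sigma C^{T}+W$ is a scalar), and to exploit that the rank-one perturbation $I-K_fC$ is nonsingular. Indeed $\det(I_n-K_fC)=1-CK_f=1-C\Sigma C^{T}(C\Sigma C^{T}+W)^{-1}=W(C\Sigma C^{T}+W)^{-1}$, which is strictly positive since $W>0$ and $C\Sigma C^{T}\ge0$. Hence $\det(A-LC)=\det(A)\,W(C\Sigma C^{T}+W)^{-1}$, so $A-LC$ is singular if and only if $A$ is; in particular, if $A$ has no eigenvalue at the origin then neither does $A^{CL}$, which is the first assertion.

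Now suppose $\lambda=0$ is an eigenvalue of $A$. I would first note that observability of $(A,C)$ --- inherited from the assumed observability of $(A_{aug},C_{aug})$ --- together with $C$ being a row vector forces every eigenvalue of $A$, in particular $0$, to have geometric multiplicity one, so $\rank A=n-1$. Since $I-K_fC$ is invertible, $\rank(A-LC)=\rank(A(I-K_fC))=n-1$, so $\ker A^{CL}$ is one dimensional and $0$ is an eigenvalue of $A^{CL}$ of multiplicity one. The uncontrollability assertion is then immediate: picking $q\neq0$ with $q^{T}A=0$ (possible since $A$ is singular), we get $q^{T}L=(q^{T}A)\Sigma C^{T}(C\Sigma C^{T}+W)^{-1}=0$, so the PBH matrix $\begin{bmatrix}q^{T}(A-0\cdot I)&q^{T}L\end{bmatrix}$ vanishes and $0$ is an uncontrollable eigenvalue of $(A,L)$; moreover $q^{T}(A-LC)=q^{T}A-(q^{T}L)C=0$, so the same $q$ is a left eigenvector of $A^{CL}$ at $0$.

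The point I expect to be delicate is the claim that the multiplicity is \emph{exactly} one, i.e.\ that the single, possibly large, Jordan block of $A$ at the origin collapses to a simple eigenvalue of $A^{CL}$. The route I would follow is to track that Jordan structure through the invertible right factor: writing $\ker A=\operatorname{span}(v_0)$ and using the Sherman--Morrison form of $(I-K_fC)^{-1}$, one finds (for $m\ge2$) that $A^{CL}$ has a Jordan block of size larger than one at $0$ precisely when $\Sigma C^{T}\in\operatorname{Im}(A)$. To settle this I would invoke the closed-loop form of the Riccati equation, $\Sigma=(A-LC)\Sigma A^{T}+EVE^{T}$, which for $q^{T}A=0$ (hence $q^{T}(A-LC)=0$ by the above) gives $q^{T}\Sigma=q^{T}EVE^{T}$, so $\Sigma q\in\operatorname{Im}(E)$; together with controllability of $(A,E)$ and $V>0$ this pins down $\Sigma q$ up to a nonzero scalar, and the claim reduces to a relative-degree-type nondegeneracy on $G_d(s)$ (essentially $CE\neq0$). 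Carrying out this last reduction --- or recording the mild hypothesis it requires --- is where I expect the real work to be; the determinant, rank, and PBH computations are routine.
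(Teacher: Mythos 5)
Your factorization route is sound as far as it goes, and it differs from the paper's: you write $L=AK_f$ with $K_f=\Sigma C^T(C\Sigma C^T+W)^{-1}$, so $A^{CL}=A(I-K_fC)$, and the matrix determinant lemma gives $\det(I-K_fC)=W/(C\Sigma C^T+W)>0$; this correctly yields the nonsingular case, shows $0$ is an eigenvalue of $A^{CL}$ whenever $A$ is singular with geometric multiplicity one, and your PBH computation $q^TA=0\Rightarrow q^TL=q^TA\Sigma C^T(\cdot)^{-1}=0$ is a clean, direct proof of uncontrollability. The paper instead works with the symplectic pencil of Pappas et al.~\cite{pappas1980numerical}, exhibiting $z_1=\begin{bmatrix}-(C^TW^{-1}Cv_1)^T & v_1^T\end{bmatrix}^T$ in the nullspace of $M_\ell$, and gets uncontrollability by citing the dual of Lemma~\ref{lem:FGK}; your arguments for these pieces are more elementary.

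The genuine gap is the central claim that the \emph{algebraic} multiplicity of the zero eigenvalue of $A^{CL}$ is exactly one when $A$ has a zero eigenvalue of multiplicity $m\ge 2$. Your rank argument cannot see this: a single nilpotent Jordan block of size $m$ has geometric multiplicity one both before and after the update, so $\operatorname{rank}(A^{CL})=n-1$ is consistent with the full block surviving. You explicitly defer this step ("where I expect the real work to be"), so as written the proposal does not prove the second sentence of the lemma. The paper closes it by a Jordan-chain argument on the pencil: extending the chain from $z_1$ requires solving $A(z_{22}-v_2)=E\,V\,(CE)\,W^{-1}(Cv_1)$, which is impossible because controllability of $(A,E)$ together with singularity of $A$ forces $E\notin\operatorname{Im}A$ --- \emph{provided} the scalar $V(CE)W^{-1}(Cv_1)$ is nonzero. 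Observability guarantees $Cv_1\neq 0$, but nothing in the paper guarantees $CE\neq 0$, so your suspicion that a nondegeneracy hypothesis of exactly this type is needed is correct and is tacitly used in the paper's own proof: for instance, with $A=\begin{bmatrix}0&1\\0&0\end{bmatrix}$, $C=\begin{bmatrix}1&0\end{bmatrix}$, $E=\begin{bmatrix}0&1\end{bmatrix}^T$, $V=W=1$ one gets $\Sigma=I$, $L=0$, and $A^{CL}=A$ retains the double zero eigenvalue. So to complete your proof you must carry out the deferred Jordan-chain (or Sherman--Morrison) analysis and state the nondegeneracy condition under which it succeeds; the determinant, rank, and PBH parts alone do not establish the lemma.
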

 
 \begin{proof}
The results of \cite{pappas1980numerical} imply that the eigenvalues of $A^{CL}$ may be found from the generalized eigenvalue problem $M_\ell z_1=\lambda M_rz_1$, where $M_\ell = \begin{bmatrix}I_n&C^TW^{-1}C\\0_{n\times n}&A\end{bmatrix}$ and $M_r = \begin{bmatrix}A^T&0_{n\times n}\\-EVE^T&I_n\end{bmatrix}$. If $A$ has no zero eigenvalues, then it is clear that $M_\ell$ is nonsingular and thus zero cannot be an eigenvalue of $A^{CL}$. If $A$ has an eigenvalue equal to zero with multiplicity $m\geq 1$, then observability of $(A, C)$ implies that its geometric multiplicity must be equal to one. Hence there exist nonzero vectors $v_1, \ldots, v_m$ such that $Av_1=0$ and $Av_k=Av_{k-1}$, $k=2,\ldots,m$. Note that the vector $z_1=\begin{bmatrix}-C^TW^{-1}Cv_1\\v_1\end{bmatrix}$ lies in the nullspace of $M_\ell$ and thus $\lambda = 0$ is an eigenvalue of $A^{CL}$ with multiplicity at least one. For the multiplicity be greater than one, it is necessary that there exists $z_2=\begin{bmatrix}z_{21}\\z_{22}\end{bmatrix}$ such that $M_\ell z_2=M_r z_1$ from which it follows that 
$Az_{22}=EVE^TC^TW^{-1}Cv_1+v_1$.
 Properties of generalized eigenvectors imply that $v_1=Av_2$, and thus that 
 \begin{equation}\label{eq:Aeqn}
 A\left(z_{22}-v_2\right)=EVE^TC^TW^{-1}Cv_1.
 \end{equation}
 The fact that $(A, E)$ is controllable implies that $E$ does not lie in the columnspace of $A$, and thus that \eqref{eq:Aeqn} has no nonzero solution. It follows that $\lambda=0$ is an eigenvalue of $A^{CL}$ with multiplicity $m=1$.
The dual of Lemma~\ref{lem:FGK} states that if $(A,C)$ is observable and $A$ and $A-LC$ share eigenvalues at zero, then these zero eigenvalues are uncontrollable eigenvalues of $(A,L)$. 
 \end{proof}
 \begin{proposition}\label{prop:eval_aug_lower} Consider the optimal estimator defined by \eqref{eq:state_aug_eqns_dis}-\eqref{eq:Ric_aug_dis}. 
 Then $A^{CL}_{aug}=A_{aug}-L_{aug}C_{aug}$ has
 \begin{enumerate}[(i)]
 \item $M$ zero eigenvalues,
 \item $n$ eigenvalues identical to those  of $A^{CL}$.
\end{enumerate}

 \end{proposition}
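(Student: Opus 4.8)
The plan is to follow the continuous-time argument of Proposition~\ref{prop:Ham}, with the Hamiltonian matrix replaced by the discrete-time matrix pencil of~\cite{pappas1980numerical} that already appears in the proof of Lemma~\ref{A_LC_lemma}. Applied to the augmented data, that result says the $n+M$ eigenvalues of $A^{CL}_{aug}$ are the stable generalized eigenvalues of the pencil $(M^{aug}_\ell, M^{aug}_r)$, with $M^{aug}_\ell=\begin{bmatrix}I_{n+M}&C_{aug}^TW^{-1}C_{aug}\\0&A_{aug}\end{bmatrix}$ and $M^{aug}_r=\begin{bmatrix}A_{aug}^T&0\\-E_{aug}VE_{aug}^T&I_{n+M}\end{bmatrix}$. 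Writing a right generalized eigenvector as $\nu=\begin{bmatrix}w^T&v^T&x^T&y^T\end{bmatrix}^T$ with $w,x\in\mathbb{R}^n$ and $v,y\in\mathbb{R}^M$ and substituting the block forms of $A_{aug}$, $C_{aug}$, $E_{aug}$, the relation $M^{aug}_\ell\nu=\lambda M^{aug}_r\nu$ becomes $w=\lambda(A^Tw+C^TB_q^Tv)$, $v+C_q^TW^{-1}C_qy=\lambda A_q^Tv$, $Ax=\lambda(x-EVE^Tw)$, and $B_qCx+A_qy=\lambda y$ — the exact discrete-time counterpart of \eqref{eq:1}--\eqref{eq:4}.

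For part (ii) I would start from an eigenvalue $\lambda$ of $A^{CL}=A-LC$ — equivalently a stable generalized eigenvalue of the reduced pencil $(M_\ell,M_r)$ of Lemma~\ref{A_LC_lemma}, necessarily nonzero when $A$ is nonsingular — together with a partial eigenvector $(w,x)$ satisfying the first and third block relations with $v=y=0$. I then set $y=(\lambda I_M-A_q)^{-1}B_qCx$ and $v=(\lambda A_q^T-I_M)^{-1}C_q^TW^{-1}C_qy$, the two inverses existing because $A_q$ is nilpotent. The fourth block relation then holds by construction, the third is unchanged, and the second is the defining equation of $v$; the remaining check is the first relation, which collapses onto the reduced-order relation once one uses $C_q(\lambda I_M-A_q)^{-1}B_q=\lambda^{-M}$ (the delay transfer function $z^{-M}$) and $e_1^T(\lambda A_q^T-I_M)^{-1}C_q^T=-\lambda^{M-1}$. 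Together these two identities are nothing but the discrete all-pass relation $z^{-M}\cdot z^{M}=1$, playing the role that $P_N(s)P_N(-s)=1$ and $D_N=\pm1$ play in Proposition~\ref{prop:Ham}. This produces the $n$ eigenvalues of $A^{CL}_{aug}$ that coincide with those of $A^{CL}$.

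For part (i) the only eigenvalue of $A_q$ is $0$, with algebraic multiplicity $M$ but geometric multiplicity one, so one cannot repeat the eigenvector construction $M$ times; instead I would prove that $\lambda=0$ is an eigenvalue of $A^{CL}_{aug}$ carrying a single Jordan block of size $M$. Setting $\lambda=0$ in the four block relations forces $w=0$, $Ax=0$ (hence $x=0$ when $A$ is nonsingular), $A_qy=0$, and $v=-C_q^TW^{-1}C_qy$, so that the pencil's eigenspace at $0$ is one-dimensional, spanned (in the $v,y$ blocks) by $e_M$. I would then build the Jordan chain $\nu_1,\dots,\nu_M$ of the pencil by successively solving $M^{aug}_\ell\nu_{k+1}=M^{aug}_r\nu_k$; the shift structure of $(A_q,B_q,C_q)$ forces each $\nu_k$ to be supported on the $e_{M-k+1}$ direction in the $v$ and $y$ blocks, with $w=x=0$ throughout, and the chain terminates at length exactly $M$ because constructing $\nu_{M+1}$ would require the nonzero scalar $B_q^Te_1$ to be annihilated, ultimately leaving $A_qy_{M+1}=c\,e_1$ with $x_{M+1}$ already pinned to zero — an equation with no solution. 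This is the discrete analog of the geometric-multiplicity-one obstruction used in Lemma~\ref{lem:FGK}. Parts (i) and (ii) together account for all $n+M$ eigenvalues.

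I expect the part-(i) bookkeeping — tracking the Jordan chain through the nilpotent block and verifying that it stops at exactly step $M$ — to be the main obstacle, along with the care needed, as in Lemma~\ref{A_LC_lemma}, when $A$ itself has an eigenvalue at the origin: then $M^{aug}_\ell$ acquires extra rank deficiency, $A^{CL}$ keeps only a simple eigenvalue at $0$ by Lemma~\ref{A_LC_lemma}, and the eigenvalue lists in (i) and (ii) overlap there. I would dispatch that case either by first proving the result for nonsingular $A$ and then appealing to Lemma~\ref{A_LC_lemma}, or by a perturbation argument that replaces $A$ by $A-\epsilon I_n$ and lets $\epsilon\to0$.
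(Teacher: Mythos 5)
Your proposal is correct and follows essentially the same route as the paper: both work with the Pappas--Laub--Sandell generalized eigenvalue pencil for the augmented Riccati equation \eqref{eq:Ric_aug_dis}, lift eigenvectors of the reduced pencil of Lemma~\ref{A_LC_lemma} to the augmented pencil by constructing the delay-block components explicitly (your resolvent/all-pass identities $C_q(\lambda I_M-A_q)^{-1}B_q=\lambda^{-M}$ and $B_q^T(\lambda A_q^T-I_M)^{-1}C_q^T=-\lambda^{M-1}$ reproduce the paper's explicit vectors of powers of $\lambda$), and exhibit a Jordan chain of length $M$ for the pencil at $\lambda=0$. The only differences are in the bookkeeping: the paper shows the zero eigenvalue has multiplicity at least $M$ and lets part (ii) complete the count, and it treats a singular $A$ not by perturbation but by extending the chain to length $M+1$ using the multiplicity-one statement of Lemma~\ref{A_LC_lemma}, which is the same resolution you anticipate.
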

 
  
 \begin{proof} 
 
To prove part~$(i)$, we note  the results of \cite{pappas1980numerical} imply that the optimal closed loop eigenvalues corresponding to the discrete Riccati equation \eqref{eq:Ric_aug_dis} may be found by solving the generalized eigenvalue problem 
  \begin{equation}\notag
    M_l z_1  = \lambda M_r  z_1,
 \end{equation}
 where    $M_l = \begin{bmatrix}I_{n+M} & C_{aug}^{T}W^{-1}C_{aug}\\0_{(n+M) \times (n+M)} & A_{aug}\end{bmatrix}$ and  $M_r = \begin{bmatrix}A_{aug}^{T} & 0_{(n+M) \times (n+M)}\\-V_{aug} & I_{n+M}\end{bmatrix}$. Using the structure of $A_q$ and $C_q$ it is straightforward to verify that 
 \begin{equation*}
     z_1 = \begin{bmatrix} 0_{(n+M-1) \times 1}^{T} &1&0_{(n+M-1) \times 1}^{T}&-W \end{bmatrix}^{T}
 \end{equation*}  lies in the nullspace of $M_\ell$, and thus that $\lambda = 0$ is an eigenvalue of $A_{aug}^{CL}$. Furthermore, it may be shown by   induction that the sequence of vectors 
 \begin{align*}
 z_2   &=  \begin{bmatrix} 0_{(n+M-2) \times 1}^{T} &1&0_{(n+M-1) \times 1}^{T}&-W&0 \end{bmatrix}^{T}\\ \ldots\\ z_M &= \begin{bmatrix} 0_{n \times1}^{T} & 1&0_{(n+M-1) \times 1}^{T}&-W&0_{(M-1) \times 1}^{T} \end{bmatrix}^{T}
 \end{align*}
 satisfies $M_\ell z_k=M_rz_{k-1},\;k=2,\ldots,M$ and thus, from the results of \cite{pappas1980numerical}, the multiplicity of $\lambda =0$ as an eigenvalue of $A_{aug}^{CL}$ must be at least $M$ due to the zeros introduced by the $M$-step delay. 
 
 
 To prove part~$(ii)$, let $\lambda$ be a nonzero eigenvalue of $A^{CL}$. Then there exists exists a nonzero vector $\begin{bmatrix}v_1^T&v_2^T\end{bmatrix}$ such that 
\begin{equation}\notag
   \begin{bmatrix}I_{n} & C^{T}W^{-1}C\\0_{n \times n} & A\end{bmatrix} \begin{bmatrix}v_1\\v_2 \end{bmatrix} = \lambda \begin{bmatrix}A^{T} & 0_{n \times n}\\-EVE^T & I_{n}\end{bmatrix}  \begin{bmatrix}v_1\\v_2 \end{bmatrix}.
 \end{equation}
 Define $a_1=-W^{-1}Cv_2\begin{bmatrix}\lambda^{-M}&\lambda^{-(M-1)}&\ldots&\lambda^{-1}\end{bmatrix}^T$, $a_2=-Wa_1$, and $\bar{v}=\begin{bmatrix}v_1^T&a_1^T&v_2^T&a_2^T\end{bmatrix}^T$. Then direct calculation shows that $M_\ell\bar{v}=\lambda M_r\bar{v}$, and thus $\lambda$ is also an eigenvalue of $A^{CL}_{aug}$.
 
It follows from Lemma~\ref{A_LC_lemma} that $A^{CL}$ can have an eigenvalue equal to zero with multiplicity at most one. In this case there exists a nonzero vector $\begin{bmatrix}v_1^T&v_2^T\end{bmatrix}$ such that \begin{equation}\notag
   \begin{bmatrix}I_{n} & C^{T}W^{-1}C\\0_{n \times n} & A\end{bmatrix} \begin{bmatrix}v_1\\v_2 \end{bmatrix} = 0_{2n \times 1}.
 \end{equation}
 Note next that $v_2$ must be nonzero, and define 
\begin{equation*}
z_{M+1}=\begin{bmatrix}C&0^T_{M\times 1}&\frac{-W}{Cv_2}v_2^T&0^T_{M\times 1}\end{bmatrix}^T.
\end{equation*}
Then $M_\ell z_{M+1}=M_rz_M$, and thus by the same reasoning as in the proof of part (i), $\lambda=0$ is an eigenvalue of $A^{CL}_{aug}$ with multiplicity $M+1$.

 \end{proof}
 
  \begin{proposition}\label{zeroevals_unctrb}
   The zero eigenvalues of $A^{CL}_{aug}$ are uncontrollable eigenvalues of $(A_{aug},L_{aug})$. 
  \end{proposition}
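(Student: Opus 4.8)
The plan is to mirror the proof of Proposition~\ref{prop:unctrb2} and invoke the dual of Lemma~\ref{lem:FGK}: if $(F,H)$ is observable and $F$ and $F-LH$ share $m$ eigenvalues (counting algebraic multiplicity), then these are uncontrollable eigenvalues of $(F,L)$. I would apply this with $F=A_{aug}$, $H=C_{aug}$, and $L=L_{aug}$, so that $F-LH=A_{aug}^{CL}$; the hypothesis that $(A_{aug},C_{aug})$ is observable is already in force, so no extra standing assumptions are needed.

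The substance is then the bookkeeping of zero eigenvalues. First I would record, from the block-triangular form $A_{aug}=\bigl[\begin{smallmatrix}A&0\\B_qC&A_q\end{smallmatrix}\bigr]$, that $\operatorname{spec}(A_{aug})=\operatorname{spec}(A)\cup\operatorname{spec}(A_q)$, where $A_q$ is the $M\times M$ nilpotent shift contributing a single Jordan block of size $M$ at the origin; hence $\lambda=0$ is an eigenvalue of $A_{aug}$ of algebraic multiplicity $M$ when $A$ is nonsingular and $M+m$ when $0\in\operatorname{spec}(A)$ with multiplicity $m$, and observability of $(A,C)$ forces its geometric multiplicity to be one (a right null vector $[v_1;v_2]$ of $A_{aug}$ must have $Av_1=0$ and $B_qCv_1+A_qv_2=0$; since the first entry of $A_qv_2$ vanishes identically, $Cv_1=0$, which by observability gives $v_1=0$). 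Second, I would quote Proposition~\ref{prop:eval_aug_lower}: $A_{aug}^{CL}$ has exactly $M$ zero eigenvalues when $A$ is nonsingular and exactly $M+1$ when $0\in\operatorname{spec}(A)$ (part (ii) together with Lemma~\ref{A_LC_lemma}, which caps the multiplicity of the zero eigenvalue of $A^{CL}$ at one). In either case the number of zero eigenvalues common to $A_{aug}$ and $A_{aug}^{CL}$ equals the multiplicity of $0$ as an eigenvalue of $A_{aug}^{CL}$, and the dual of Lemma~\ref{lem:FGK} then delivers that all of these zero eigenvalues are uncontrollable eigenvalues of $(A_{aug},L_{aug})$, which is the claim.

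The main obstacle is the multiplicity bookkeeping when $0$ is an eigenvalue of $A$ with multiplicity $m\geq 2$: then $A_{aug}$ carries more zero eigenvalues ($M+m$) than $A_{aug}^{CL}$ ($M+1$), whereas Lemma~\ref{lem:FGK} is phrased for a common eigenvalue of equal multiplicity in both matrices. I expect this to be harmless, since the Jordan-chain induction in the proof of Lemma~\ref{lem:FGK} only uses chains of length equal to the shorter of the two multiplicities, and the geometric-multiplicity-one structure established above is exactly what that induction needs; but it is worth stating explicitly rather than quoting the lemma verbatim. An alternative, more self-contained route would bypass the lemma entirely: show that the $M$ delay-induced zero modes are uncontrollable from $L_{aug}$ by a direct PBH argument exploiting the structure of $A_q$, $B_q$, $C_q$, and then account for the possible extra zero mode via Lemma~\ref{A_LC_lemma} applied to the reduced $n$-dimensional problem. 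Either way, no ideas beyond those already used for Proposition~\ref{prop:unctrb2} and Lemma~\ref{A_LC_lemma} are required.
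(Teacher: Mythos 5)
Your proposal takes essentially the same route as the paper: the paper's entire proof is the one-line application of the dual of Lemma~\ref{lem:FGK} with $F=A_{aug}$, $H=C_{aug}$, $L=L_{aug}$, using observability of $(A_{aug},C_{aug})$ and the fact that $A_{aug}$ and $A^{CL}_{aug}$ share their zero eigenvalues. Your extra multiplicity bookkeeping (including the caveat about $0\in\operatorname{spec}(A)$ with multiplicity $m\geq 2$, where the zero-eigenvalue multiplicities of $A_{aug}$ and $A^{CL}_{aug}$ differ) is a more careful reading of the lemma than the paper gives, but it is not a different argument.
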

 
 \begin{proof}
 From the dual of Lemma~\ref{lem:FGK}, we see that since $(A_{aug},C_{aug})$ is observable and that $A_{aug}$ and $A^{CL}_{aug}$ share  zero eigenvalues with $A_{aug}$, then these zero eigenvalues are uncontrollable eigenvalues of $(A_{aug},L_{aug})$.  
 \end{proof}

 Let us now relate the preceding results to those of \cite{mirkin2021dead}. We showed in Proposition~\ref{zeroevals_unctrb} that an optimal estimation problem with a delay at the plant output results in closed loop estimator eigenvalues that are either at the origin or that arise from a lower order estimation problem that does not involve the delay. The authors of \cite{mirkin2021dead}, on the other hand, consider an optimal regulator problem with a delay at the plant input and show that the optimal eigenvalues are either at the origin or arise from a lower optimal regulator problem that ignores the delay. Although these are dual results, the proof techniques used in \cite{mirkin2021dead} are different than those above.

\end{appendices}

 
\bibliography{IEEEabrv,caodelay}

\begin{thebibliography}{10}
\providecommand{\url}[1]{#1}
\csname url@samestyle\endcsname
\providecommand{\newblock}{\relax}
\providecommand{\bibinfo}[2]{#2}
\providecommand{\BIBentrySTDinterwordspacing}{\spaceskip=0pt\relax}
\providecommand{\BIBentryALTinterwordstretchfactor}{4}
\providecommand{\BIBentryALTinterwordspacing}{\spaceskip=\fontdimen2\font plus
\BIBentryALTinterwordstretchfactor\fontdimen3\font minus
  \fontdimen4\font\relax}
\providecommand{\BIBforeignlanguage}[2]{{%
\expandafter\ifx\csname l@#1\endcsname\relax
\typeout{** WARNING: IEEEtran.bst: No hyphenation pattern has been}%
\typeout{** loaded for the language `#1'. Using the pattern for}%
\typeout{** the default language instead.}%
\else
\language=\csname l@#1\endcsname
\fi
#2}}
\providecommand{\BIBdecl}{\relax}
\BIBdecl

\bibitem{more2018scaling}
H.~L. More and J.~M. Donelan, ``Scaling of sensorimotor delays in terrestrial
  mammals,'' \emph{Proc R Soc B}, vol. 285, no. 1885, p. 20180613, 2018.

\bibitem{madhavsynergy2020}
M.~S. Madhav and N.~J. Cowan, ``The synergy between neuroscience and control
  theory: the nervous system as inspiration for hard control challenges,''
  \emph{Annu Rev Control Robot Auton Syst}, vol.~3, pp. 243--267, 2020.

\bibitem{franklin2008specificity}
D.~W. Franklin and D.~M. Wolpert, ``Specificity of reflex adaptation for
  task-relevant variability,'' \emph{J Neurosci}, vol.~28, no.~52, pp.
  14\,165--14\,175, 2008.

\bibitem{haith2016independence}
A.~M. Haith, J.~Pakpoor, and J.~W. Krakauer, ``Independence of movement
  preparation and movement initiation,'' \emph{J Neurosci}, vol.~36, no.~10,
  pp. 3007--3015, 2016.

\bibitem{zimmetcerebellar2020}
A.~M. Zimmet, D.~Cao, A.~J. Bastian, and N.~J. Cowan, ``Cerebellar patients
  have intact feedback control that can be leveraged to improve reaching,''
  \emph{eLife}, vol.~9, p. e53246, 2020.

\bibitem{susilaradeya2019extrinsic}
D.~Susilaradeya, W.~Xu, T.~M. Hall, F.~Galán, K.~Alter, and A.~Jackson,
  ``Extrinsic and intrinsic dynamics in movement intermittency,'' \emph{eLife},
  vol.~8, p. e40145, Apr. 2019.

\bibitem{crevecoeur2019filtering}
F.~Crevecoeur and M.~Gevers, ``Filtering compensation for delays and prediction
  errors during sensorimotor control,'' \emph{Neural Comput}, vol.~31, no.~4,
  pp. 738--764, 2019.

\bibitem{Smith57}
O.~J.~M. Smith, ``Closer control of loops with dead time,'' \emph{Chem. Eng.
  Prog.}, vol.~53, no.~5, pp. 217--219, 1957.

\bibitem{garcia2006control}
P.~Garcia, P.~Albertos, and T.~H{\"a}gglund, ``Control of unstable
  non-minimum-phase delayed systems,'' \emph{Journal of Process Control},
  vol.~16, no.~10, pp. 1099--1111, 2006.

\bibitem{sanz2018generalized}
R.~Sanz, P.~Garc{\'\i}a, and P.~Albertos, ``A generalized {S}mith predictor for
  unstable time-delay siso systems,'' \emph{ISA transactions}, vol.~72, pp.
  197--204, 2018.

\bibitem{lima2018robust}
B.~M. Lima, D.~M. Lima, and J.~E. Normey-Rico, ``A robust predictor for
  dead-time systems based on the {K}alman filter,'' \emph{IFAC-PapersOnLine},
  vol.~51, no.~25, pp. 24--29, 2018.

\bibitem{miall1993cerebellum}
R.~C. Miall, D.~J. Weir, D.~M. Wolpert, and J.~Stein, ``Is the cerebellum a
  {S}mith predictor?'' \emph{J Motor Behav}, vol.~25, no.~3, pp. 203--216,
  1993.

\bibitem{tolu2020cerebellum}
S.~Tolu, M.~C. Capolei, L.~Vannucci, C.~Laschi, E.~Falotico, and M.~V.
  Hern{\'a}ndez, ``A cerebellum-inspired learning approach for adaptive and
  anticipatory control,'' \emph{International journal of neural systems},
  vol.~30, no.~01, p. 1950028, 2020.

\bibitem{mirkin2003every}
L.~Mirkin and N.~Raskin, ``Every stabilizing dead-time controller has an
  observer--predictor-based structure,'' \emph{automatica}, vol.~39, no.~10,
  pp. 1747--1754, 2003.

\bibitem{mirkin2021dead}
L.~Mirkin and D.~Zanutto, ``Dead-time compensation as an observer-based
  design,'' \emph{IEEE Control Systems Letters}, vol.~6, pp. 1604--1609, 2021.

\bibitem{natori2012design}
K.~Natori, ``A design method of time-delay systems with communication
  disturbance observer by using {P}ad\'e approximation,'' in \emph{2012 12th
  IEEE International Workshop on Advanced Motion Control (AMC)}.\hskip 1em plus
  0.5em minus 0.4em\relax IEEE, 2012, pp. 1--6.

\bibitem{probst2010using}
A.~Probst, M.~Magana, and O.~Sawodny, ``Using a {K}alman filter and a {P}ad\'e
  approximation to estimate random time delays in a networked feedback control
  system,'' \emph{IET Control Theory Appl}, vol.~4, no.~11, pp. 2263--2272,
  2010.

\bibitem{Franklin}
G.~Franklin, J.~Powell, and A.~Emami-Naeini, \emph{Feedback Control of Dynamic
  Systems}, 5th~ed.\hskip 1em plus 0.5em minus 0.4em\relax Reading, Mass.:
  Addison--Wesley, 2006.

\bibitem{AM97}
P.~J. Antsaklis and A.~N. Michel, \emph{Linear Systems}.\hskip 1em plus 0.5em
  minus 0.4em\relax New York: McGraw--Hill, 1997.

\bibitem{Carver09}
S.~G. Carver, T.~Kiemel, N.~J. Cowan, and J.~J. Jeka, ``Optimal motor control
  may mask sensory dynamics,'' \emph{Biol Cybern}, vol. 101, no.~1, pp. 35--42,
  July 2009.

\bibitem{KS72}
H.~Kwakernaak and R.~Sivan, \emph{Linear Optimal Control Systems}.\hskip 1em
  plus 0.5em minus 0.4em\relax New York NY: Wiley-Interscience, 1972.

\bibitem{bastian2006learning}
A.~J. Bastian, ``Learning to predict the future: the cerebellum adapts
  feedforward movement control,'' \emph{Curr Opin Neurobiol}, vol.~16, no.~6,
  pp. 645--649, 2006.

\bibitem{miall1996forward}
R.~C. Miall and D.~M. Wolpert, ``Forward models for physiological motor
  control,'' \emph{Neural Netw}, vol.~9, no.~8, pp. 1265--1279, 1996.

\bibitem{paulin1989kalman}
M.~Paulin, ``A {K}alman filter theory of the cerebellum,'' in \emph{Dynamic
  interactions in neural networks: Models and data}.\hskip 1em plus 0.5em minus
  0.4em\relax Springer, 1989, pp. 239--259.

\bibitem{kiemelidentification2011}
T.~Kiemel, Y.~Zhang, and J.~J. Jeka, ``Identification of neural feedback for
  upright stance in humans: stabilization rather than sway minimization.''
  \emph{J Neurosci}, vol.~31, no.~42, pp. 15\,144--15\,153, Oct. 2011.

\bibitem{zhong2006robust}
Q.-C. Zhong, \emph{Robust control of time-delay systems}.\hskip 1em plus 0.5em
  minus 0.4em\relax Springer Science \& Business Media, 2006.

\bibitem{pappas1980numerical}
T.~Pappas, A.~Laub, and N.~Sandell, ``On the numerical solution of the
  discrete-time algebraic riccati equation,'' \emph{IEEE Transactions on
  Automatic Control}, vol.~25, no.~4, pp. 631--641, 1980.

\end{thebibliography}
\end{document}